\documentclass[12pt]{amsart}
\usepackage{amssymb,amsmath,mathrsfs,enumerate,mathtools}
\usepackage{tikz-cd}
\usepackage[pdftex,bookmarks=true]{hyperref}

\textwidth = 420pt
\hoffset = -30pt
\theoremstyle{plain}

\DeclareMathOperator*{\esssup}{ess\,sup}
\newtheorem{theorem}{Theorem}[section]
\newtheorem{problem}[theorem]{Problem}
\newtheorem{proposition}[theorem]{Proposition}
\newtheorem{lemma}[theorem]{Lemma}
\newtheorem{corollary}[theorem]{Corollary}
\theoremstyle{definition}

\newtheorem{example}[theorem]{Example}
\newtheorem{definition}[theorem]{Definition}

\numberwithin{equation}{section}

\DeclareRobustCommand{\rchi}{{\mathpalette\irchi\relax}}
\newcommand{\irchi}[2]{\raisebox{\depth}{$#1\chi$}}

\author[M.~Tantrawan]{Made Tantrawan}
\address{\textit{a} Department of Mathematics,   National University of Singapore, Singapore 119076 \newline
\textit{b} Department of Mathematics, Faculty of Mathematics and Natural Sciences, Universitas Gadjah Mada, Indonesia 55281}
\email{made.tantrawan@ugm.ac.id}

\author[D.~Leung]{Denny H.~Leung}
\address{Department of Mathematics, National University of Singapore, Singapore 119076}
\email{matlhh@nus.edu.sg}

\title{On closedness of convex sets in Banach lattices}

\thanks{The first author is supported by NUS Research Scholarship. The second author is partially supported by AcRF grant R-146-000-242-114.}
\keywords{convex sets, order closed sets, order continuous dual, Banach lattices}

\subjclass[2010]{46B42, 46A55, 46A20}

\date{\today}

\begin{document}

\begin{abstract}
Let $X$ be a Banach lattice. A well-known problem arising from the theory of risk measures asks when order closedness of a convex set in $X$ implies closedness with respect to the topology $\sigma(X,X_n^\sim)$, where $X_n^\sim$ is the order continuous dual of $X$. Motivated by the solution in the Orlicz space case, we introduce two relevant properties: the disjoint order continuity property ($DOCP$) and the order subsequence splitting property ($OSSP$). We show that when $X$ is monotonically complete with $OSSP$ and $X_n^\sim$ contains a strictly positive element, every order closed convex set in $X$ is $\sigma(X,X_n^\sim)$-closed  if and only if $X$ has $DOCP$ and either $X$ or $X_n^\sim$ is order continuous. This in turn occurs if and only if either $X$ or the norm dual $X^*$ of $X$ is order continuous. We also give a modular condition under which a Banach lattice has $OSSP$. In addition, we also give a characterization of $X$ for which order closedness of a convex set in $X$ is equivalent to closedness with respect to the topology $\sigma(X,X_{uo}^\sim)$, where $X_{uo}^\sim$ is the unbounded order continuous dual of $X$.
\end{abstract}

\maketitle

\section{Introduction}

\subsection{Background and motivations}
This paper is motivated by recent developments in the theory of risk measures. One of important problems in the theory of risk measures asks when a coherent risk measure admits a Fenchel-Moreau dual representation. Note that any coherent risk measure is a proper convex functional. For a locally convex topological space $(X,\tau)$, the Fenchel-Moreau formula \cite[Theorem 1.11]{Br} asserts that a proper convex functional $\rho:X\to(-\infty,\infty]$ admits a dual representation via the topological dual $(X,\tau)^*$ if and only if $\rho$ is $\tau$-lower semicontinuous, i.e., $C_\lambda:=\left\{\rho\leq\lambda\right\}$ is $\tau$-closed for every $\lambda\in\mathbb{R}$.  In a Banach lattice $X$, the order continuous dual $X_n^\sim$ of $X$ is one of the topological duals that has been intensively studied recently, related to this duality problem (see \cite{BF, De, GL, GLMX, GLX, GX, Ow}). When $X=L^\infty$, it was proved in \cite{De} that a coherent risk measure $\rho$ admits a Fenchel-Moreau dual representation  via $X_n^\sim=L^1$ if and only if $\rho$ has the Fatou property, i.e., $\rho(f)\leq\liminf_n\rho(f_n)$ whenever $\{f_n\}$ order converges to $f$ in $X$. Hence, in order to solve the Fenchel-Moreau duality problem for a general Banach lattice, it is natural to ask the following problem which was stated as an open question in \cite[p. 3585]{Ow}.

\begin{problem}\label{mainproblem0}
	Let $\rho$ be a proper convex functional on a Banach lattice $X$. Does the Fatou property of $\rho$ imply  $\sigma(X,X_n^\sim)$ lower semicontinuity of $\rho$?
\end{problem}

Since the Fatou property of $\rho$ is equivalent to order closedness of the sublevel sets $C_\lambda$, Problem \ref{mainproblem0} is closely related to the following problem.

\begin{problem}\label{mainproblem}
	Let $X$ be a Banach lattice. Is it true that every order closed convex set in $X$ is $\sigma(X,X_n^\sim)$-closed?
\end{problem}

Note that an affirmative answer to Problem \ref{mainproblem} will give an affirmative answer to Problem \ref{mainproblem0}. When $X=L^\varphi$ is an Orlicz space over a nonatomic probability space, Delbaen and Owari \cite{DO} obtained a partial positive solution to Problem \ref{mainproblem}. Completing the result for Orlicz spaces $X=L^\varphi$, Gao et al. \cite{GLX} proved that Problem \ref{mainproblem0} and Problem \ref{mainproblem} have affirmative answers if and only if either $\varphi$ or its conjugate $\varphi^*$ satisfies the so-called $\Delta_2$-condition, equivalently, if and only if either $X$ or $X_n^\sim$ has order continuous norm. In this paper, we extend the result into a large class of Banach lattices. 

The paper is organized as follows. In Section 2, we investigate some necessary and sufficient conditions for Banach lattices to have an affirmative answer for Problem \ref{mainproblem}. Motivated by \cite{GLX}, we introduce two properties which we call the order subsequence splitting property ($OSSP$) and the disjoint order continuity property ($DOCP$), respectively. The main result of the paper is that for a monotonically complete Banach lattice $X$ which has $OSSP$ and admits a strictly positive order continuous functional, Problem \ref{mainproblem} has an affirmative answer if and only if $X$ has $DOCP$ and either $X$ or $X_n^\sim$ is order continuous, if and only if $X$ or $X^*$ is order continuous. In Section 3, we apply the results in Section 2 into some known Banach lattices, namely Musielak-Orlicz spaces and Ces\`{a}ro function spaces. We also explain why Theorem \ref{P1-characterization} can be seen as a generalization of the result in \cite{GLX}. In the last section, we give a characterization of when order closedness of a convex set in a Banach lattice $X$ implies closedness with respect to the topology  $\sigma(X,X_{uo}^\sim)$, where $X_{uo}^\sim$ is the unbounded order continuous dual of $X$.

\subsection{Basic definitions and facts}

We refer to \cite{MN, Sc, AB, Wn}  for basic definitions and facts on Banach lattices.  Let $X$ be a Banach lattice. A set $E$ in $X$ is said to be order bounded if there exists $h\in X$ such that $|f|\leq h$ for every $f\in E$. $X$ is called Dedekind complete (respectively, $\sigma$-Dedekind complete) if every non-empty order bounded set (respectively, sequence) has a supremum and an infimum in $X$. A sublattice $E$ of $X$ is said to be order dense in $ X$ if for each $f\in X$ with $f>0$, there is some $h\in E$ such that $0<h\leq f$.

A net $\{f_{\alpha}\}$ in $X$ is said to order converge to $f$ in $X$, written as $f_\alpha\xrightarrow{o}f$, if there exists a net $\{h_\gamma\}$ such that $h_\gamma\downarrow0$ and for every $\gamma>0$, there exists $\alpha_0$ such that $|f_\alpha-f|\leq h_\gamma$ for $\alpha\geq\alpha_0$. When $X$ is $\sigma$-Dedekind complete, a sequence  $\{f_n\}$ order converges to $f$ in $X$ if and only if there exists a decreasing sequence $\{h_n\}$ in $X$ such that $|f_n-f|\leq h_n$ for every $n$. The order continuous dual $ X_n^\sim$ of $ X$ is the collection of all linear functionals $g$ on $ X$ which are order continuous, i.e., $\langle g,f_\alpha\rangle:=g(f_\alpha)\to0$ whenever $f_\alpha\xrightarrow{o}0$. Note that $X_n^\sim$ is a Banach lattice and an ideal in the norm dual $ X^*$ of $ X$.  Denote by $ X_a$ the order continuous part of $X$, that is, the set of all $f\in{X}$ such that $\|f_\alpha\|_{X}\to0$ whenever $f_\alpha\xrightarrow{o}0$ and $|f_\alpha|\leq |f|$. We say that $X$ is order continuous if $ X_a= X$, or equivalently, $ X_n^\sim= X^*$ (\cite[Theorem 2.4.2]{MN}). 

A net $\{f_\alpha\}$ in $X$ is said to uo-converge to $f$ in $X$, we write $f_\alpha\xrightarrow{uo}0$, if $|f_\alpha-f|\wedge h\xrightarrow{o}0$ for all $h\in X_+$. The unbounded order continuous dual (uo-dual) ${X}_{uo}^\sim$ of $X$ is the collection of all linear functionals $g$ on $X$ such that $\langle g,f_\alpha\rangle\to0$ whenever $f_\alpha\xrightarrow{uo}0$ and $\{f_\alpha\}$ is norm bounded. For any Banach lattice $X$, $X_{uo}^\sim$ is the order continuous part of $X_n^\sim$, i.e., $X_{uo}^\sim=\left(X_{n}^\sim\right)_a$ (\cite[Theorem 2.3.]{GLX1}). See \cite{GLX1, GTX, GX1} for more details on the concepts of uo-convergence and uo-dual.

A Banach lattice $X$ is said to be monotonically complete if $\sup_\alpha f_\alpha$ exists for every increasing norm bounded net $\{f_\alpha\}$ in $X$. We say that $X$ has the weak Fatou property if  there exists $r>0$ such that every increasing net $\{f_\alpha\}$ with the supremum $f\in X$ satisfies $\|f\|_X\leq r\sup_\alpha \|f_\alpha\|_X$. Note that a monotonically complete Banach lattice $X$ is Dedekind complete and has the weak Fatou property (\cite[Theorem 2.4.19]{MN}). When $X$ is Dedekind complete and $X_n^\sim$ separates points of $X$ (i.e., for every $f\in X$ with $f\neq 0$, there exists $g\in X_n^\sim$ such that $\langle g,f\rangle\neq 0$), $X$ is monotonically complete if and only if $X=(X_n^\sim)_n^\sim$ with equivalence of norms (\cite[Theorem 2.4.22]{MN}).

\section{Order closedness and  $\sigma( X, X_n^\sim)$-closedness of convex sets in Banach lattices}

Let $X$ be a Banach lattice. For any $E\subseteq X$, we define its order closure $\overline{E}^o$ to be the set of all $f\in X$ such that there exists a net $\{f_\alpha\}$ in $E$ which order converges to $f$. We say that $E\subseteq  X$ is order closed if $\overline{E}^o=E$. A net $\{f_\alpha\}$ in $ X$ is said to $|\sigma|( X, X_n^\sim)$-converge to $f$ in $ X$, written as $f_\alpha\xrightarrow{|\sigma|( X, X_n^\sim)}f$, if $\{|f_\alpha-f|\}$ $\sigma( X, X_n^\sim)$-converges to $0$. Observe that for any $E\subseteq  X$, $$\overline{E}^o\subseteq \overline{E}^{|\sigma|_s( X, X_n^\sim)}\subseteq\overline{E}^{\sigma_s( X, X_n^\sim)}\subseteq  \overline{E}^{\sigma( X, X_n^\sim)}=\overline{E}^{|\sigma|( X, X_n^\sim)}$$ where $\overline{E}^{\sigma_s( X, X_n^\sim)}$ and $\overline{E}^{|\sigma|_s( X, X_n^\sim)}$ are the $\sigma( X, X_n^\sim)$-sequential closure and the \linebreak $|\sigma|( X, X_n^\sim)$-sequential closure of $E$, respectively. The last equality comes from Mazur's theorem and the fact that the topological dual of $ X$ under $|\sigma|( X, X_n^\sim)$ is $ X_n^\sim$ (\cite[Theorem 3.50]{AB}). It follows that every $\sigma( X, X_n^\sim)$-closed convex set in $ X$ is order closed. Therefore, if Problem \ref{mainproblem} has an affirmative answer, the order closedness of a convex set $C$ in $ X$ is equivalent to the $\sigma( X, X_n^\sim)$-closedness of $C$.  

\begin{definition}
Let $ X$ be a Banach lattice. We say that order closedness of convex sets in $ X$ is $\sigma( X, X_n^\sim)$ determined if every order closed convex set $C$ is $\sigma( X, X_n^\sim)$-closed. We call this property $P1$ in brief.
\end{definition}

With this terminology, we are interested precisely in the problem of identifying the Banach lattices $X$ with property $P1$. We begin with a necessary condition for $X$ to have property $P1$. For any $x\in X$, denote by $B(x)$ the band generated by $x$. When $X$ is $\sigma$-Dedekind complete, each $B(x)$ is a projection band (see, e.g., \cite[Corollary 2, p. 64]{Sc}). 

\begin{lemma}\label{split-seq}
	Suppose that $X$ is Dedekind complete with the weak Fatou property and $X_n^\sim$ separates points of $X$. If both $ X$ and $ X_n^\sim$ are not order continuous, then there exist a norm bounded set of disjoint positive elements $A=\{x_n\}_{n\geq 1}\cup \{w_0\}\cup\{w_{ij}\}_{i,j\geq1}$ in $ X$ and a norm bounded set of disjoint positive elements $B=\{y_n\}_{n\geq 1}\cup \{z_0\}\cup\{z_{ij}\}_{i,j\geq1}$ in $ X_n^\sim$ such that
	\begin{enumerate}[$(a)$]
		\item the order sums $(o)\sum_{n\geq 1}x_n$ and $(o)\sum_{i,j\geq1}z_{ij}$ belong to $ X$ and $ X_n^\sim$ respectively,
		\item  $\langle y_n,x_n\rangle =\langle z_0,w_0\rangle=\langle z_{ij},w_{ij}\rangle=1$ for all $n,i,j\geq1$, and $\langle y,x \rangle=0$ for the remaining pairs $(x,y)\in A\times B$. 
	\end{enumerate}
\end{lemma}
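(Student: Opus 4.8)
The plan is to build the two families separately—$\{x_n\},\{y_n\}$ from the failure of order continuity of $X$, and the doubly indexed system together with $w_0,z_0$ from that of $X_n^\sim$—and then force the two systems onto disjoint bands so that all cross pairings vanish. First recall the standard fact that a Banach lattice fails to be order continuous exactly when it carries a positive disjoint order bounded sequence bounded away from $0$ in norm; applied to $X$ this yields $u\in X_+$ and disjoint $0\le a_n\le u$ with $\inf_n\|a_n\|\ge\delta>0$, and applied to $X_n^\sim$ (Dedekind complete, being an ideal in $X^*$) a $\zeta\in(X_n^\sim)_+$ and disjoint $0\le b_k\le\zeta$ with $\inf_k\|b_k\|\ge\delta'>0$. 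Since finite suprema of disjoint positive order bounded elements are bounded by the dominating element, the order sums $(o)\sum_n a_n\le u$ and $(o)\sum_k b_k\le\zeta$ exist, which will give $(a)$. Two norming facts produce norm bounded biorthogonal partners: $X$ norms $X_n^\sim$ isometrically, since for $g\in X_n^\sim_+$ we have $\|g\|=\sup\{\langle g,f\rangle:f\in X_+,\ \|f\|\le1\}$; and the weak Fatou property provides a constant $r$ with $\|f\|_X\le r\sup\{\langle g,f\rangle:g\in X_n^\sim_+,\ \|g\|\le1\}$ for $f\in X_+$, i.e.\ $X_n^\sim$ $r$-norms $X$ (this is where the hypothesis enters). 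Finally note that each $f\in X$ acts as an order continuous functional on $X_n^\sim$, so band projections on $X$ and their adjoints behave well with respect to $X_n^\sim$.

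For $\{a_n\}$, let $P_n$ be the band projection onto $B(a_n)$ (a projection band as $X$ is Dedekind complete); the $P_n$ are pairwise disjoint, hence so are the adjoints $P_n^*$ on $X^*$, which moreover preserve $X_n^\sim$ because $P_n$ is order continuous. Using the $r$-norming, choose $g^{(n)}\in X_n^\sim_+$ with $\|g^{(n)}\|\le1$ and $\langle g^{(n)},a_n\rangle\ge\delta/2r$, and set $y_n=P_n^*g^{(n)}/\langle g^{(n)},a_n\rangle$. As $y_n$ is concentrated on $B(a_n)$ and $a_m\in B(a_m)\perp B(a_n)$ for $m\ne n$, one gets $\langle y_n,a_m\rangle=\delta_{nm}$ automatically, while $\{y_n\}$ is disjoint (disjoint dual bands) and norm bounded by $2r/\delta$. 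Dually, disjoint positive order continuous functionals have pairwise disjoint carrier bands in $X$; let $C_k=N_{b_k}^{d}$ be the carrier of $b_k$ (where $N_{b_k}=\{f:\langle b_k,|f|\rangle=0\}$ is the null ideal, a band, and $E^{d}$ denotes the disjoint complement), with band projection $R_k$. By isometric norming pick $v^{(k)}\in X_+$, $\|v^{(k)}\|\le1$, $\langle b_k,v^{(k)}\rangle\ge\delta'/2$, and set $w_k=R_kv^{(k)}/\langle b_k,v^{(k)}\rangle$; since $b_k$ is concentrated on $C_k$ while $b_l$ vanishes on $C_k$ for $l\ne k$, we get $\langle b_l,w_k\rangle=\delta_{lk}$, with $\{w_k\}$ disjoint and norm bounded by $2/\delta'$. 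Relabelling $\mathbb N\cong\{0\}\sqcup(\mathbb N\times\mathbb N)$ splits $\{b_k\},\{w_k\}$ into $z_0,\{z_{ij}\}$ and $w_0,\{w_{ij}\}$, preserving $(o)\sum_{i,j}z_{ij}\le\zeta$ and all pairing relations internal to this system; rename $x_n:=a_n$.

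It remains to place the two systems on disjoint bands of $X$. Put $B_X=\bigvee_n B(x_n)$ and $C_Z=\bigvee_k C_k$. A direct check shows that if $B_X\perp C_Z$ then every still-missing relation of $(b)$ holds: $x_n\perp w_k$, and since $y_n,z_k$ are concentrated on $B_X,C_Z$ respectively, $y_n\perp z_k$, giving disjointness of $A$ and of $B$; moreover $\langle z_k,x_n\rangle=0$ because $x_n\in B_X\subseteq C_Z^{d}\subseteq N_{b_k}$, and $\langle y_n,w_k\rangle=0$ because $w_k\in C_Z\subseteq B_X^{d}\subseteq\ker P_n$. To arrange $B_X\perp C_Z$, use that the order continuous part $X_a$ is a projection band and split $X_n^\sim=(X_a)^*\oplus\big((X_a)^{d}\big)_n^\sim$ into the functionals concentrated on $X_a$, respectively on $X_a^{d}$; as $X_n^\sim$ is not order continuous, one summand is not. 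In the favourable case $(X_a)^*=(X_a)_n^\sim$ is not order continuous: then take $\{b_k\}$ there, so $C_Z\subseteq X_a$, and take $\{a_n\}$ in the band $(X_a)^{d}$—which is nonzero and totally non-order-continuous since $X$ is not order continuous—so $B_X\subseteq (X_a)^{d}\subseteq C_Z^{d}$, and the construction closes.

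The main obstacle is the remaining case, where $(X_a)^*$ is order continuous but $\big((X_a)^{d}\big)_n^\sim$ is not: here both failures of order continuity localize to the single totally non-order-continuous band $Y=(X_a)^{d}$, leaving no free complementary band for the second system. I expect to resolve this by a gliding-hump argument inside $Y$: because disjoint order bounded sequences are null against the order continuous dual, $\langle\zeta,a_n\rangle\to0$ and $\langle b_k,u\rangle\to0$, so one can pass to subsequences of $\{a_n\}$ and $\{b_k\}$ whose cross pairings $\langle b_k,a_n\rangle$ are summably small; the carriers of the $b_k$ and the bands of the $a_n$ are then almost disjoint, and a final use of the band and carrier projections (subtracting the small overlaps) upgrades this to exact disjointness of footprints while keeping each element positive, norm bounded, order summable, and bounded away from $0$ in norm. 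The delicate point of the whole argument is precisely to verify that these simultaneous perturbations preserve all of $(a)$ and $(b)$ at once, rather than any single relation in isolation.
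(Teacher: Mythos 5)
Your constructions \emph{within} each system are essentially correct and close in spirit to the paper's: the pairs $(x_n,y_n)$ are obtained exactly as in the paper (norming via the weak Fatou property, i.e.\ \cite[Lemma 2.4.20]{MN}, then localization by the adjoint band projection onto $B(x_n)$), and your use of carrier bands for the pairs $(w_k,b_k)$ is a legitimate variant of the paper's appeal to \cite[Proposition 2.3.1]{MN}. The genuine gap is exactly where you flag it: your case analysis via $X=X_a\oplus X_a^{d}$ only closes when $(X_a)^*$ fails order continuity, and in the remaining case you offer a plan, not a proof. That case is not vacuous: for $X=\bigl(\bigoplus_k L^\infty[0,1]\bigr)_{\ell^1}$ one has $X_a=\{0\}$ (so $(X_a)^*$ is trivially order continuous), $X$ is monotonically complete, and $X_n^\sim\cong\bigl(\bigoplus_k L^1[0,1]\bigr)_{\ell^\infty}$ is not order continuous, so both failures live on $Y=X_a^{d}=X$. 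Moreover, the gliding-hump repair you sketch faces a real obstruction: smallness of the pairings $\langle b_k,a_n\rangle$ gives no control whatsoever on the \emph{norms} of the band components of $a_n$ inside the carrier bands of the $b_k$. Already in $L^\infty[0,1]$ with $\zeta$ the Lebesgue integral, one can have $\langle\zeta,a_n\rangle\to0$ while the component of $a_n$ in the carrier band of $\zeta$ is all of $a_n$, of norm $1$; so ``subtracting the small overlaps'' can annihilate the $a_n$ entirely, destroying the lower norm bounds on which the existence of the norm-bounded biorthogonal functionals $y_n$ depends. Since nothing in your sketch rules this out, the hard case remains open in your argument.

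For comparison, the paper disposes of precisely this case by a pigeonhole trick that needs no perturbation: take the normalized disjoint sequence $\{f_n\}$ in $X_+$ with $(o)\sum_n f_n\in X$ witnessing non-order-continuity of $X$ (via \cite[Theorem 5.14, p. 94]{Sc}), split it into odd and even parts, and let $P_1,P_2$ be the band projections onto $B\bigl((o)\sum_n f_{2n-1}\bigr)$ and $B\bigl((o)\sum_n f_{2n}\bigr)$, with $P_3=I_X-P_1-P_2$. Then \emph{two} disjoint bands $P_1(X),P_2(X)$ are non-order-continuous, while $X_n^\sim=P_1^*(X_n^\sim)\oplus P_2^*(X_n^\sim)\oplus P_3^*(X_n^\sim)$ has its failure of order continuity in some single $P_s^*(X_n^\sim)$; choosing $t\in\{1,2\}\setminus\{s\}$ gives a non-order-continuous band $P_t(X)$ that pairs to zero with $P_s^*(X_n^\sim)$ automatically, since $P_sP_t=0$. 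Building $\{x_n\},\{y_n\}$ inside $P_t(X)$ and $\{z_0\}\cup\{z_{ij}\},\{w_0\}\cup\{w_{ij}\}$ inside $P_s^*(X_n^\sim)$ and $P_s(X)$ then yields all of $(a)$ and $(b)$ at once. If you replace your $X_a$-based dichotomy by this three-band splitting, the rest of your argument goes through essentially unchanged.
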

\begin{proof}
Since $X$ is not order continuous, we may apply \cite[Theorem 5.14, p. 94]{Sc} to obtain a normalized disjoint sequence $\{f_n\}$ in $X_+$ such that the order sum $(o)\sum_nf_n$ belongs to $X$. Let $\bar{f}_1:=(o)\sum_nf_{2n-1}$ and $\bar{f}_2:=(o)\sum_nf_{2n}$.  Denote by $P_1$ and $P_2$ the band projections onto $B(\bar{f}_1)$ and $B(\bar{f}_2)$, respectively. Since $\bar{f}_1$ and $\bar{f}_2$ are disjoint, $P_3:=I_X-P_1-P_2$ is also a band projection, where $I_X$ is the identity operator on $X$. Observe that 
\[
X=P_1(X)\oplus P_2(X)\oplus P_3(X).
\]
Since $\{f_{2n-1}\}_n$ and $\{f_{2n}\}_n$ are normalized disjoint sequences in $P_1(X)=B(\bar{f}_1)$ and $P_2(X)=B(\bar{f}_2)$ respectively, both $P_1(X)$ and $P_2(X)$ are not order continuous by \cite[Theorem 5.14, p. 94]{Sc}. For $i=1,2,3$, let $P_i^*$ be the dual operator of $P_i$. Then each $P_i^*$ is a band projection and
\[
X_n^\sim=P_1^*\left(X_n^\sim\right)\oplus P_2^*\left(X_n^\sim\right)\oplus  P_3^*\left(X_n^\sim\right).
\]
Since $X_n^\sim$ is not order continuous, there exists $s\in\{1,2,3\}$ such that $P_s^*(X_n^\sim)$ is not order continuous. Pick $t\in \{1,2\}\backslash\{s\}$. Then $P_t(X)$ is not order continuous and $\langle y,x\rangle=0$ for every $(x,y)\in \left(P_s^*(X_n^\sim)\times P_t(X)\right)\cup \left(P_t^*(X_n^\sim)\times P_s(X)\right)$.

Since $P_t(X)$ is not order continuous, we may apply \cite[Theorem 5.14, p. 94]{Sc} to obtain a normalized disjoint positive sequence $\{x_n\}_{n\geq 1}$ in $P_t(X)$ such that the order sum $(o)\sum_{n\geq 1}x_n$ belongs to $X$. Since $X$ is Dedekind complete with the weak Fatou property and $X_n^\sim$ separates points of $X$, by \cite[Lemma 2.4.20]{MN} there exists $r\geq 1$ such that
\[
\|x\|_X\leq r^2\sup\{\langle x',|x|\rangle:x'\in (X_n^\sim)_+,\ \|x'\|_{X_n^\sim}\leq 1\}
\]
for all $x\in X$. Then we can choose a norm bounded sequence of positive elements $\{y'_n\}_{n\geq 1}\subseteq  X_n^\sim$ such that $\langle y'_n,x_n\rangle=1$ for all $n$. For every $n$, let $Q_n$ be the band projection onto $B(x_n)\subseteq P_t(X)$. Set $y_n=Q_n^*(y_n')$ where $Q_n^*$ is the dual operator of $Q_n$. Then $\{y_n\}_{n\geq 1}$ is a norm bounded disjoint positive sequence in $X_n^\sim$ such that $y_n\wedge y=0$ for all $y\in P^*_s(X_n^\sim)_+$, $\langle y_n,x_n\rangle=1$ and $\langle y_n,x_m\rangle=0$ for all $n\neq m$. 

Since $P^*_s(X_n^\sim)$ is not order continuous, we may apply \cite[Theorem 5.14, p. 94]{Sc} again to obtain a normalized disjoint positive sequence $\{z'_0\}\cup\{z'_{ij}\}_{i,j\geq1}$ in $P^*_s(X_n^\sim)\subseteq X_n^\sim$ such that the order sum $(o)\sum_{i,j\geq1}z'_{ij}$ belongs to $X_n^\sim$. Then there is a norm bounded sequence $\{w'_0\}\cup\{w'_{ij}\}_{i,j\geq1}$ in $P_s(X)$ such that $\langle z'_0,w'_0\rangle=\langle z'_{ij},w'_{ij}\rangle= 2$ for every $i,j\geq 1$. Applying \cite[Proposition 2.3.1]{MN} to sequences $\{z'_0\}\cup\{z'_{ij}\}_{i,j\geq1}$ and $\{w'_0\}\cup\{w'_{ij}\}_{i,j\geq1}$, we can find a norm bounded disjoint positive sequence $\{w_0\}\cup\{w_{ij}\}_{i,j\geq1}$ in $P_s(X)$ and a subsequence $\{z_0\}\cup\{z_{ij}\}_{i,j\geq1}$ of $\{z'_0\}\cup\{z'_{ij}\}_{i,j\geq1}$ such that $\langle z_0,w_0\rangle=\langle z_{ij},w_{ij}\rangle= 1$ and $\langle z_0,w_{ij}\rangle=\langle z_{ij},w_{0}\rangle=\langle z_{ij},w_{nm}\rangle=0$ for all $(i,j)\neq (n,m)$. Set $A=\{x_n\}_{n\geq 1}\cup \{w_0\}\cup\{w_{ij}\}_{i,j\geq1}$ and $B=\{y_n\}_{n\geq 1}\cup \{z_0\}\cup\{z_{ij}\}_{i,j\geq1}$. Then they are norm bounded sets of disjoint positive elements  in $X$ and $ X_n^\sim$, respectively, which satisfy $(a)$ and $(b)$.
\end{proof}

For the next lemma, we assume that $X$ is Dedekind complete with the weak Fatou property, $X_n^\sim$ separates points of $X$, and both $X$ and $ X_n^\sim$ are not order continuous. Let $\{x_n\}_{n\geq 1}\cup \{w_0\}\cup\{w_{ij}\}_{i,j\geq1}$ and $\{y_n\}_{n\geq 1}\cup \{z_0\}\cup\{z_{ij}\}_{i,j\geq1}$ be norm bounded sequences obtained in Lemma \ref{split-seq}. Observe that for any $x\in X$, 
\[
\sum_{i,j\geq 1}\left|\langle z_{ij},x\rangle\right|\leq \langle \bar{z},|x|\rangle\leq\|x\|_{ X}\|\bar{z}\|_{ X_n^\sim},
\]
where $\bar{z}:=(o)\sum_{i,j\geq1}z_{ij}$. Then the map $T: X\to \ell^\infty \oplus \mathbb{R}\oplus \ell^1(\mathbb{N}\times\mathbb{N})$ defined by
\[
T(x)=\left\{\langle y_n,x\rangle\right\}_{n\geq 1}\oplus \langle z_0,x\rangle\oplus \left\{\langle z_{ij},x\rangle\right\}_{i,j\geq 1}
\]
is a positive bounded linear operator on $ X$. 

For any $x\in X$, we write $x\sim(\lambda,b)$ if there are $\lambda\in\mathbb{R}$ and $b=\{b(i,j)\}_{i,j\geq1}\in\ell^1(\mathbb{N}\times\mathbb{N})$ such that 
\[
\lambda\geq0,\ b\geq0,\ \sum_{i}2^i\|b_i\|_1=1,
\]
\[
a\geq-\lambda,\ v\geq\lambda b\ \text{and}\ u\geq\lambda\sum_{i=1}^l 4^iS(b_i)\ \text{for all}\ l\geq1, 
\]
where $T(x)=u\oplus a\oplus v$, $b_i=\{b(i,j)\}_j$ for every $i$, and $S(\{a_{j}\}_j)=\left\{\sum_{j=1}^na_j\right\}_{n}$ for every $\{a_j\}_j$.

\begin{lemma}\label{orderclosednotsigmaclosed}
Let $C=\{x\in X:x\sim(\lambda,b)\ \text{for some}\ \lambda\in\mathbb{R}\ \text{and}\ b\in\ell^1(\mathbb{N}\times\mathbb{N})\}.$ Then $C$ is convex, not $\sigma(X,X_n^\sim)$ closed, and $f\in C$ whenever there is a norm bounded net $\{f_\alpha\}$ in $C$ such that ${|f_\alpha-f|} \xrightarrow{\sigma(X,X_n^\sim)}0$.
\end{lemma}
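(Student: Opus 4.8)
The plan is to work throughout with the images $T(f)=u\oplus a\oplus v$ and to rephrase membership in $C$ in terms of the auxiliary vector $\beta:=\lambda b\in\ell^1(\mathbb{N}\times\mathbb{N})$. Unwinding the definition, $x\in C$ exactly when there is some $\beta\geq 0$ with $v\geq\beta$ coordinatewise, $u_n\geq\sum_{i=1}^l 4^i\sum_{j\leq n}\beta(i,j)$ for all $n,l$, and $a\geq-\sum_i 2^i\|\beta_i\|_1$; the normalized $b$ is then recovered as $b=\beta/\lambda$ with $\lambda=\sum_i 2^i\|\beta_i\|_1$ (and any normalized $b$ works when $\lambda=0$). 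Convexity is then routine bookkeeping: given $x\sim(\lambda,b)$ and $x'\sim(\lambda',b')$ with associated $\beta,\beta'$, the vector $t\beta+(1-t)\beta'$ witnesses $tx+(1-t)x'\in C$, and one checks that its induced normalizing constant is precisely $t\lambda+(1-t)\lambda'$, so the normalization survives.

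For non-$\sigma(X,X_n^\sim)$-closedness I would exhibit $-w_0$ as a point of the weak closure of $C$ that is not in $C$. Membership fails because $T(-w_0)=0\oplus(-1)\oplus0$ forces $\lambda\geq1$ from $a\geq-\lambda$ while $v=0$ forces $\beta=0$, contradicting $\sum_i2^i\|b_i\|_1=1$. To place $-w_0$ in the closure I use $x_{N,J}:=-w_0+2^{-N}w_{NJ}+2^N(o)\sum_{n\geq J}x_n$, which lies in $C$ by taking $\beta$ supported at the single entry $(N,J)$ with value $2^{-N}$. Given $g_1,\dots,g_R\in X_n^\sim$ and $\epsilon>0$, I first choose $N$ so large that $2^{-N}\|w_{NJ}\|\,\|g_r\|<\epsilon/2$ (possible since $\{w_{ij}\}$ is norm bounded), and then, with $N$ fixed, choose $J$ so large that $2^N\sum_{n\geq J}|\langle g_r,x_n\rangle|<\epsilon/2$ for every $r$; this is possible because $\sum_n|\langle g_r,x_n\rangle|\leq\langle|g_r|,(o)\sum_n x_n\rangle<\infty$. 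Then $x_{N,J}\in C$ meets the prescribed weak neighbourhood of $-w_0$.

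The substantive statement is the last one, and here the norm bound is essential. Writing $T(f_\alpha)=u^\alpha\oplus a^\alpha\oplus v^\alpha$ and choosing witnesses $\beta^\alpha\geq0$ as above, I would extract two uniform tightness estimates. From $|u^\alpha_n|\leq\|y_n\|\,\|f_\alpha\|\leq KL$ (with $K=\sup_n\|y_n\|$, $L=\sup_\alpha\|f_\alpha\|$) and the $u$-constraint, letting $n\to\infty$ gives the \emph{row} tightness $\sum_i 4^i\|\beta^\alpha_i\|_1\leq KL$ uniformly in $\alpha$; in particular $\lambda_\alpha=\sum_i2^i\|\beta^\alpha_i\|_1\leq KL$ is bounded and the weighted tails $\sum_{i>I}2^i\|\beta^\alpha_i\|_1\leq 2^{-I}KL$ are uniformly small. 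From the hypothesis $|f_\alpha-f|\xrightarrow{\sigma(X,X_n^\sim)}0$ tested against $\bar z=(o)\sum_{ij}z_{ij}\geq0$ I get $\|v^\alpha-v\|_{\ell^1}\leq\langle\bar z,|f_\alpha-f|\rangle\to0$, i.e.\ \emph{column} tightness of the dominating vectors $v^\alpha\geq\beta^\alpha$. Passing to a subnet along which $\beta^\alpha\to\beta$ coordinatewise (Tychonoff on $\prod_{ij}[0,\sup_\alpha v^\alpha_{ij}]$), the $\ell^1$-convergence $v^\alpha\to v$ upgrades this to $\beta^\alpha\to\beta$ in $\ell^1$, so in particular each row sum converges, $\|\beta^\alpha_i\|_1\to\|\beta_i\|_1$.

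Finally I would pass the constraints to the limit. Coordinatewise limits give $v\geq\beta\geq0$ and, for each fixed $l$, $u_n\geq\sum_{i=1}^l4^i\sum_{j\leq n}\beta(i,j)$. The delicate point, and the main obstacle, is the constraint $a\geq-\lambda$: a priori Fatou only yields $\lambda=\sum_i2^i\|\beta_i\|_1\leq\liminf\lambda_\alpha$, which is the wrong direction. This is exactly where the two tightness estimates combine: row sums converge (from the $\ell^1$-convergence) while the $2^i$-weighted tails are uniformly negligible (from the row tightness), so in fact $\lambda_\alpha\to\lambda$, no mass being lost to either high rows or high columns. Then $a=\lim a^\alpha\geq\lim(-\lambda_\alpha)=-\lambda$, and with the $\lambda=0$ case checked separately this gives $f\in C$. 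I expect the net bookkeeping (subnet extraction and eventual-tail control in the tightness arguments) and the simultaneous use of the $4^i$-weighted and $\ell^1$ tightness to secure $\lambda_\alpha\to\lambda$ to be the only genuinely delicate points.
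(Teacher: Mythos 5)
Your proof is correct, and while it shares the paper's skeleton (push everything through $T$, pass the defining constraints of $C$ to the limit, and exploit the $4^i$-versus-$2^i$ weight gap to prevent mass loss in the normalizing constant), the execution of the main closedness claim is genuinely different and more economical. The paper records only the weak convergence $v_\alpha\to v$ in $\sigma(\ell^1,\ell^\infty)$, and therefore must invoke the convexity of $C$ together with Mazur's theorem to produce convex combinations $f_{\alpha,n}\in C$ whose $v$-components converge in norm, then metrizability of $\sigma(\ell^\infty,\ell^1)$ and of the $\ell^1$-norm on bounded sets to reduce to a sequence $f'_k\sim(\lambda_k,b_k)$, and finally relative norm compactness in $\ell^1$ of $\{\lambda_k b_k\}$ (dominated by the norm-convergent $v'_k$) to extract a limit witness. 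You shortcut all of this by noticing that the paper's own estimate $\sum_{i,j}|\langle z_{ij},f_\alpha-f\rangle|\le\langle\bar z,|f_\alpha-f|\rangle\to0$ already yields \emph{norm} convergence $v^\alpha\to v$ in $\ell^1$; staying with nets, Tychonoff gives a coordinatewise-convergent subnet of the witnesses $\beta^\alpha$, and the domination $0\le\beta^\alpha\le v^\alpha$ by an $\ell^1$-norm-convergent net upgrades this to $\ell^1$-convergence. Your mass-conservation step ($\lambda_\alpha\to\lambda$, which rescues the one constraint, $a\ge-\lambda$, for which Fatou points the wrong way) is the same mechanism as the paper's dominated-convergence step: your uniform tail bound $\sum_{i>I}2^i\|\beta^\alpha_i\|_1\le2^{-I}KL$ is exactly its dominating sequence. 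What your route buys: no Mazur, no metrizability, no reduction to sequences, and, notably, no use of the convexity of $C$ in proving the closedness assertion; moreover the $\beta=\lambda b$ reformulation makes all constraints linear, so convexity of $C$ becomes transparent and the separate $\lambda=0$ case evaporates (finiteness of $\sum_i2^i\|\beta_i\|_1$ is automatic since $u\in\ell^\infty$). Two small points to make explicit in a write-up: the coordinatewise convergences $u^\alpha(n)\to u(n)$ and $a^\alpha\to a$, obtained by testing $|f_\alpha-f|\xrightarrow{\sigma(X,X_n^\sim)}0$ against $y_n$ and $z_0$, which you use silently when passing the $u$- and $a$-constraints to the limit. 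Finally, your explicit arguments for convexity and for $-w_0\in\overline{C}^{\sigma(X,X_n^\sim)}\setminus C$ (via the elements $x_{N,J}$) are a correct, self-contained version of what the paper delegates to \cite[Lemma 3.6]{GLX}.
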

\begin{proof}
  Using the same steps as in the proof of \cite[Lemma 3.6]{GLX}, one can show that $C$ is convex and $-w_0\in \overline{C}^{\sigma( X, X_n^\sim)}\backslash C$. In particular, $C$ is not $\sigma( X, X_n^\sim)$-closed.	

Now, let $f\in X$ and $\{f_\alpha\}_{\alpha\in I}$ be a norm bounded net in $C$ such that ${|f_\alpha-f|} \xrightarrow{\sigma(X,X_n^\sim)}0$. Write
	\[
	T(f_\alpha)=u_\alpha\oplus a_\alpha\oplus v_\alpha\quad\text{and}\quad T(f)=u\oplus a\oplus v.
	\]
	For each $n\geq1$, denote by $u(n)$ the $n$-th coordinate of a vector $u\in\ell^\infty$. Note that
	\[
	\lim_\alpha u_\alpha(n)=\lim_\alpha\langle y_n,f_\alpha\rangle=\langle y_n,f\rangle=u(n)
	\]
	for every $n\geq1$. Moreover, since $\{f_\alpha\}_{\alpha\in I}$ is norm bounded in $ X$, $\{u_\alpha\}_{\alpha\in I}$ is norm bounded in $\ell^\infty$. It follows that $\{u_\alpha\}_{\alpha\in I}$ $\sigma(\ell^\infty,\ell^1)$-converges to $u$. Similarly, $\{a_\alpha\}_{\alpha\in I}$ converges to $a$. For any $\{c_{ij}\}_{i,j\geq 1}\in\ell^\infty(\mathbb{N}\times\mathbb{N})$,
	\begin{eqnarray*}
		\left|\langle\{c_{ij}\},v_\alpha-v\rangle\right|=\left|\sum_{i,j\geq 1}c_{ij}\langle z_{ij},f_\alpha-f\rangle\right|&\leq&\sum_{i,j\geq 1}|c_{ij}|\left\langle z_{ij},|f_\alpha-f|\right\rangle\\
		&\leq&\sup_{i,j\geq 1}|c_{ij}|\left\langle\bar{z},|f_\alpha-f|\right\rangle\to0.
	\end{eqnarray*}
	Hence $\{v_\alpha\}_{\alpha\in I}$ converges to $v$ with respect to the topology $\sigma(\ell^1(\mathbb{N}\times\mathbb{N}),\ell^\infty(\mathbb{N}\times\mathbb{N}))$. 
	
	Denote by $\text{co}(E)$ the set of all convex combinations of elements in $E$. Observe that for every $\alpha\in I$, 
	\[
	v\in \overline{\text{co}(\{v_\beta\}_{\beta\geq\alpha})}^{\sigma(\ell^1(\mathbb{N}\times\mathbb{N}),\ell^\infty(\mathbb{N}\times\mathbb{N}))}=\overline{\text{co}(\{v_\beta\}_{\beta\geq\alpha})}^{\|\cdot\|_{\ell^1(\mathbb{N}\times\mathbb{N})}}.
	\]
	Then for every $(\alpha,n)\in I\times \mathbb{N}$, there exists $v_{\alpha,n}\in \text{co}\{v_\beta\}_{\beta\geq\alpha}$ such that 
	\[
	\|v_{\alpha,n}-v\|_{{\ell^1(\mathbb{N}\times\mathbb{N})}}\leq\frac{1}{n}.
	\]
	For every $(\alpha,n)\in I\times \mathbb{N}$, let $f_{\alpha,n}$, $u_{\alpha,n}$ and $a_{\alpha,n}$ be the corresponding convex combinations in $\text{co}(\{f_\beta\}_{\beta\geq\alpha})$, $\text{co}(\{u_\beta\}_{\beta\geq\alpha})$ and $\text{co}(\{a_\beta\}_{\beta\geq\alpha})$, respectively. Then $\{v_{\alpha,n}\}_{(\alpha,n)\in I\times\mathbb{N}}$ norm converges to $v$,  $\{f_{\alpha,n}\}_{(\alpha,n)\in I\times\mathbb{N}}$ $\sigma( X, X_n^\sim)$-converges to $f$, \linebreak $\{u_{\alpha,n}\}_{(\alpha,n)\in I\times\mathbb{N}}$ $\sigma(\ell^\infty,\ell^1)$-converges to $u$ and $\{a_{\alpha,n}\}_{(\alpha,n)\in I\times\mathbb{N}}$ converges to $a$.
	
	Now, since $(\ell^\infty,\sigma(\ell^\infty,\ell^1))$ and $(\ell^1,\|\cdot\|_{\ell^1(\mathbb{N}\times\mathbb{N})})$ are metrizable on norm bounded sets, we can find a sequence $\{\alpha_k,n_k\}\subseteq  I\times\mathbb{N}$ such that
	\[
	u'_k:=u_{\alpha_k,n_k}\xrightarrow{\sigma(\ell^\infty,\ell^1)}u,\quad a'_k:=a_{\alpha_k,n_k}\rightarrow a\quad \text{and}\quad v'_k:=v_{\alpha_k,n_k}\xrightarrow{\|\cdot\|_{\ell^1(\mathbb{N}\times\mathbb{N})}}v.
	\]
	For each $k\in\mathbb{N}$, let $f'_k:=f_{\alpha_k,n_k}\sim (\lambda_k,b_k)$ and write $b_{k i}=\{b_k(i,j)\}_{j}$ for each $i\in\mathbb{N}$. Choose $M$ so that $\|u'_k\|_\infty\leq M$ for all $k\in\mathbb{N}$. If $l\geq 1$, then
	\begin{equation}\label{counter111}
	M\geq u'_k(n)\geq \lambda_k\sum_{i=1}^l 4^iS(b_{k i})(n)\to \lambda_k\sum_{i=1}^l 4^i\|b_{k i}\|_1
	\end{equation}
	as $n\to\infty$. It follows that $M\geq \lambda_k \sum_{i=1}^l 4^i\|b_{k i}\|_1\geq \lambda_k \sum_{i=1}^l 2^i\|b_{k i}\|_1=\lambda_k\geq0$ and hence, $\{\lambda_k\}$ is a bounded sequence. By passing to a subsequence, we may assume that $\{\lambda_k\}$ converges to some $\lambda\geq0$. 
	
	If $\lambda=0$, it is easy to check that $f\sim (\lambda,b)$ for any $b\in \ell^1(\mathbb{N}\times\mathbb{N})$ such that $\sum_{i=1}^l 2^i\|b_i\|_1=1$ where $b_i=\{b(i,j)\}_j$. Hence, $f\in C$. Now, suppose that $\lambda>0$. Since $v'_k\geq\lambda_kb_k\geq0$ for all $k\in\mathbb{N}$ and $\{v'_k\}$ norm converges in $\ell^1(\mathbb{N}\times\mathbb{N})$, it follows that $\{\lambda_kb_k\}$ is relatively norm compact in $\ell^1(\mathbb{N}\times\mathbb{N})$. By passing to a subsequence again, we may assume that $\{\lambda_kb_k\}$ norm converges to some $d\in \ell^1(\mathbb{N}\times\mathbb{N})$. Set $b=\frac{d}{\lambda}$. Then $b\geq0$ and $\{b_k\}$ norm converges to $b$. We claim that $f\sim(\lambda,b)$. Clearly, for any $i\geq1$, $2^i\|b_{ki}\|_1\to2^i\|b_i\|_1$ as $k\to\infty$. Choose $k_0$ such that $\lambda_k\geq\frac{\lambda}{2}$ for all $k\geq k_0$. By (\ref{counter111}), if $k\geq k_0$, then $0\leq 2^i\|b_{ki}\|_1\leq\frac{M}{\lambda 2^{i-1}}$ for any $i\geq1$. From the dominated convergence theorem, we obtain that
		\[
		\sum_i2^i\|b_i\|_1=\lim_{k\to\infty}\sum_i2^i\|b_{ki}\|_1=1.
		\]
		Furthermore,
		\[
		a=\lim_{k\to\infty} a'_k\geq-\lim_{k\to\infty}\lambda_k=-\lambda
	\quad\text{and}\quad 
		v=\lim_{k\to\infty} v'_k\geq\lim_{k\to\infty}\lambda_kb_k=\lambda b.
		\]
		Note that, for each $n$ and $i$, $S(b_{ki})(n)\to S(b_i)(n)$ as $k\to\infty$. Then for any $l\geq 1$,
		\[
		u(n)=\lim_{k\to\infty} u'_k(n)\geq\lim_{k\to\infty}\lambda_k\sum_{i=1}^l4^iS(b_{ki})(n)=\lambda\sum_{i=1}^l4^iS(b_{i})(n).
		\]
		It follows that
		\[
		u\geq \lambda\sum_{i=1}^l4^iS(b_{i})\quad\text{for any}\ l\geq 1.
		\]
		Thus, $f\sim (\lambda,b)$ and hence, $f\in C$, as desired.
\end{proof}

As a consequence, we have the following necessary condition for property $P1$.

\begin{theorem}\label{sufficient-P1}
	Suppose that $ X$ is Dedekind complete with the weak Fatou property and $X_n^\sim$ separates points of $X$. If $ X$ has property $P1$, then either $ X$ or $ X_n^\sim$ is order continuous.
\end{theorem}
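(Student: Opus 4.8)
The plan is to prove the contrapositive: assuming that \emph{neither} $X$ \emph{nor} $X_n^\sim$ is order continuous, I will exhibit an order closed convex set that fails to be $\sigma(X,X_n^\sim)$-closed, which shows that $X$ lacks property $P1$. Since $X$ is Dedekind complete with the weak Fatou property and $X_n^\sim$ separates points of $X$, the standing hypotheses of Lemma \ref{split-seq} and Lemma \ref{orderclosednotsigmaclosed} are met, so I may take the convex set $C$ constructed in Lemma \ref{orderclosednotsigmaclosed}. That lemma already supplies two of the three facts I need: $C$ is convex, and $C$ is \emph{not} $\sigma(X,X_n^\sim)$-closed. It remains only to verify that $C$ is order closed.

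To prove order closedness, I would take an arbitrary net $\{f_\alpha\}$ in $C$ with $f_\alpha\xrightarrow{o}f$ and aim to conclude $f\in C$. The key observation is that an order convergent net is eventually order bounded: by definition there is a net $\{h_\gamma\}$ with $h_\gamma\downarrow 0$, and fixing any one index $\gamma_0$ produces an index $\alpha_0$ with $|f_\alpha-f|\leq h_{\gamma_0}$ for all $\alpha\geq\alpha_0$; hence $|f_\alpha|\leq|f|+h_{\gamma_0}$ for $\alpha\geq\alpha_0$. Thus the tail $\{f_\alpha\}_{\alpha\geq\alpha_0}$ is order bounded, and therefore norm bounded, since order intervals in a Banach lattice are norm bounded. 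This tail is still a net in $C$ and still order converges to $f$, so $|f_\alpha-f|\xrightarrow{o}0$ along it.

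Because every functional in $X_n^\sim$ is order continuous, order convergence of $|f_\alpha-f|$ to $0$ forces $|f_\alpha-f|\xrightarrow{\sigma(X,X_n^\sim)}0$. The tail $\{f_\alpha\}_{\alpha\geq\alpha_0}$ is then a norm bounded net in $C$ with $|f_\alpha-f|\xrightarrow{\sigma(X,X_n^\sim)}0$, which is precisely the hypothesis of the final assertion of Lemma \ref{orderclosednotsigmaclosed}. That assertion yields $f\in C$, so $C$ is order closed. Combining this with the fact that $C$ is convex but not $\sigma(X,X_n^\sim)$-closed shows that $X$ does not have property $P1$, which completes the contrapositive and hence the theorem.

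The step I expect to require the most care is the reduction to the norm bounded tail, since Lemma \ref{orderclosednotsigmaclosed} is stated only for norm bounded nets. The argument rests on two points that should be stated explicitly: an order convergent net is eventually order bounded, and order intervals in a Banach lattice are norm bounded. One should also note that passing to the tail preserves both membership in $C$ (the tail consists of elements of $C$) and order convergence to $f$. Once the net is replaced by its tail, everything reduces to a direct appeal to the two preceding lemmas.
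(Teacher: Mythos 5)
Your proposal is correct and follows essentially the same route as the paper: both invoke the set $C$ of Lemma \ref{orderclosednotsigmaclosed}, reduce an order convergent net in $C$ to an order bounded (hence norm bounded) subnet/tail, and then apply the final assertion of that lemma to conclude order closedness, contradicting $P1$. Your explicit justification of the tail being order bounded and of order intervals being norm bounded merely fills in details the paper leaves implicit.
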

\begin{proof}
Suppose that both $X$ and $X_n^\sim$ are not order continuous. Let $C$ be the set defined in Lemma \ref{orderclosednotsigmaclosed}. Then $C$ is convex and not $\sigma(X,X_n^\sim)$-closed. Let $f$ be an element in the order closure of $C$. There exists a net $\{f_\alpha\}$ in $C$ that order converges to $f$. By passing to a subnet, we may assume that $\{f_\alpha\}$ is order bounded, and hence it is norm bounded. Since $|f_\alpha-f|\xrightarrow{o}0$, $|f_\alpha-f|\xrightarrow{\sigma(X,X_n^\sim)}0$. By Lemma \ref{orderclosednotsigmaclosed}, we deduce that $f\in C$. Thus, $C$ is an order closed convex set in $X$ which is not $\sigma(X,X_n^\sim)$-closed. This contradicts property $P1$. Thus, either $ X$ or $ X_n^\sim$ is order continuous
\end{proof}

Lemma \ref{orderclosednotsigmaclosed} also gives a characterization of the Krein-Smulian property for \linebreak $\sigma( X, X_n^\sim)$. We say that $\sigma( X, X_n^\sim)$ has the Krein-Smulian property if every convex set $C$ in $ X$ is $\sigma( X, X_n^\sim)$-closed whenever $C\cap k\mathcal{B}$ is $\sigma( X, X_n^\sim)$-closed for every $k\in \mathbb{N}$, where $$\mathcal{B}=\{x\in X: \langle x',|x|\rangle\leq 1\ \text{for all}\ x'\in (X_n^\sim)_+\ \text{with}\ \|x'\|_{X_n^\sim}\leq 1\}.$$ Note that $\mathcal{B}$ is $\sigma(X, X_n^\sim)$-closed and hence, order closed. When $X$ is a Dedekind complete Banach lattice with the weak Fatou property and $X_n^\sim$ separates points of $X$, $\mathcal{B}$ is norm bounded (see \cite{MN}[Theorem 2.4.20]).

\begin{theorem}\label{KSprop}
	Suppose that $ X$ is monotonically complete and $X_n^\sim$ separates points of $X$. Then $\sigma( X, X_n^\sim)$ has the Krein-Smulian property if and only if either $X$ or $X_n^\sim$ is order continuous.
\end{theorem}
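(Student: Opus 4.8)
The plan is to prove the equivalence in two directions, splitting sufficiency into two cases according to which of $X,X_n^\sim$ is order continuous, and proving necessity by contraposition from Lemma~\ref{orderclosednotsigmaclosed}. The starting observation is a concrete description of $\mathcal{B}$. Since $X$ is monotonically complete it is Dedekind complete with the weak Fatou property, so $\mathcal{B}$ is norm bounded and the gauge $|||x|||:=\sup\{\langle x',|x|\rangle:x'\in(X_n^\sim)_+,\ \|x'\|_{X_n^\sim}\le1\}$ is a lattice norm on $X$ whose closed unit ball is exactly $\mathcal{B}$. A direct computation using only the lattice duality shows that $|||\cdot|||$ is the dual norm that $X$ carries as a subspace of $(X_n^\sim)^*$, i.e. $|||x|||=\sup\{|\langle x',x\rangle|:\|x'\|_{X_n^\sim}\le1\}$. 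Moreover, monotone completeness together with $X_n^\sim$ separating points yields $X=(X_n^\sim)_n^\sim$ as Banach lattices with equivalent norms (\cite[Theorem~2.4.22]{MN}).

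For sufficiency, first suppose $X_n^\sim$ is order continuous. Then $(X_n^\sim)_n^\sim=(X_n^\sim)^*$, so $X=(X_n^\sim)^*$ and $\sigma(X,X_n^\sim)$ is precisely the weak$^*$ topology of the dual Banach space $(X_n^\sim)^*$; by the previous paragraph $\mathcal{B}$ is exactly its dual unit ball. In this situation property KS is nothing but the assertion of the classical Krein--Smulian theorem, that a convex set in $(X_n^\sim)^*$ is weak$^*$-closed as soon as its intersection with every multiple $k\mathcal{B}$ of the dual ball is weak$^*$-closed, so there is nothing further to prove. Next suppose $X$ is order continuous. Then $X_n^\sim=X^*$, whence $|||\cdot|||=\|\cdot\|_X$ and $\mathcal{B}=B_X$, and $\sigma(X,X_n^\sim)$ is the weak topology. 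Here I would argue directly: if $C$ is convex with $C\cap kB_X$ weakly closed, hence norm closed, for all $k$, then $C$ is norm closed, because a norm-convergent sequence from $C$ is bounded and therefore lies in some $C\cap kB_X$; being convex and norm closed, $C$ is weakly closed. Thus KS holds in both cases.

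For necessity I argue contrapositively. Assume neither $X$ nor $X_n^\sim$ is order continuous and let $C$ be the convex set furnished by Lemma~\ref{orderclosednotsigmaclosed}, which is not $\sigma(X,X_n^\sim)$-closed. I claim each $C\cap k\mathcal{B}$ is $\sigma(X,X_n^\sim)$-closed, contradicting KS and finishing the proof. Fix $k$ and take $f$ in the $\sigma(X,X_n^\sim)$-closure of $C\cap k\mathcal{B}$. Because $C\cap k\mathcal{B}$ is convex and the topologies $\sigma(X,X_n^\sim)$ and $|\sigma|(X,X_n^\sim)$ have the common dual $X_n^\sim$, the $\sigma(X,X_n^\sim)$- and $|\sigma|(X,X_n^\sim)$-closures of $C\cap k\mathcal{B}$ coincide, so there is a net $\{f_\alpha\}$ in $C\cap k\mathcal{B}$ with $|f_\alpha-f|\xrightarrow{\sigma(X,X_n^\sim)}0$. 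This net is norm bounded, since $\mathcal{B}$ is, so Lemma~\ref{orderclosednotsigmaclosed} gives $f\in C$; and $f\in k\mathcal{B}$ because $k\mathcal{B}$ is $\sigma(X,X_n^\sim)$-closed. Hence $f\in C\cap k\mathcal{B}$, proving the claim.

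The step I expect to be most delicate is the passage from $\sigma$- to $|\sigma|$-convergence in the necessity argument: Lemma~\ref{orderclosednotsigmaclosed} only controls membership in $C$ through a net with $|f_\alpha-f|\xrightarrow{\sigma(X,X_n^\sim)}0$, whereas $\sigma(X,X_n^\sim)$-closedness must be tested against arbitrary $\sigma(X,X_n^\sim)$-convergent nets; it is exactly the convexity of $C\cap k\mathcal{B}$, forcing the two closures to agree, that bridges this gap. A secondary point needing care is the identification of $\mathcal{B}$ with the correct unit ball in each case of the sufficiency proof, since this is what allows the clean reduction to the classical Krein--Smulian theorem on the one hand and to a bare norm-closedness argument on the other.
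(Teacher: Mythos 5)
Your proof is correct and follows essentially the same route as the paper: sufficiency by identifying $\sigma(X,X_n^\sim)$ with the weak topology (when $X$ is order continuous) or with the weak$^*$ topology of $(X_n^\sim)^*=(X_n^\sim)_n^\sim=X$ (when $X_n^\sim$ is order continuous), and necessity from Lemma~\ref{orderclosednotsigmaclosed} together with the norm boundedness and $\sigma(X,X_n^\sim)$-closedness of $\mathcal{B}$. The details you supply --- the identification of $\mathcal{B}$ with the relevant unit ball via the gauge $|||\cdot|||$, and the passage from the $\sigma$- to the $|\sigma|$-closure of the convex set $C\cap k\mathcal{B}$ using that both topologies have dual $X_n^\sim$ --- are precisely the steps the paper's terse proof leaves implicit, and they are carried out correctly.
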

\begin{proof}

Suppose that either $X$ or $X_n^\sim$ is order continuous. If $X$ is order continuous, then $\sigma( X, X_n^\sim)$ is just the weak topology and hence, it has the Krein-Smulian property. If $X_n^\sim$ is order continuous, $(X_n^\sim)_n^\sim=( X_n^\sim)^*$. Since $X$ is monotonically complete, $(X_n^\sim)_n^\sim=X$. It follows that $\sigma( X, X_n^\sim)=\sigma(( X_n^\sim)_n^\sim, X_n^\sim)=\sigma(( X_n^\sim)^*, X_n^\sim)$ is the weak-star topology. Hence, it also has the Krein-Smulian property. The reverse implication follows from Lemma \ref{orderclosednotsigmaclosed} and the fact that $\mathcal{B}$ is norm bounded and $\sigma(X, X_n^\sim)$-closed.
\end{proof}

What about the converse of Theorem \ref{sufficient-P1}? It is easy to see that order continuity of $X$ is a sufficient condition for property $P1$.

\begin{proposition}\label{a-orderclosed=sigmaclosed-orderctsnorm}
	A $\sigma$-Dedekind complete Banach lattice $X$ is order continuous if and only if $\overline{C}^o= \overline{C}^{\sigma( X, X_n^\sim)}$ for every convex set $C$ in $ X$. In particular, if $ X$ is order continuous, then $ X$ has property $P1$.
\end{proposition}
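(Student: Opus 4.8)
The plan is to prove the two implications separately, using throughout the inclusion $\overline{C}^o\subseteq\overline{C}^{\sigma(X,X_n^\sim)}$ from the displayed chain, so that in each direction only the reverse containment needs attention. Sufficiency (together with the ``in particular'' clause) is routine; the necessity direction is where the real work lies.

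For sufficiency I would assume $X$ order continuous, so that $X_n^\sim=X^*$ and $\sigma(X,X_n^\sim)$ is exactly the weak topology; for convex $C$, Mazur's theorem then gives $\overline{C}^{\sigma(X,X_n^\sim)}=\overline{C}^{\|\cdot\|}$. It therefore remains to check $\overline{C}^{\|\cdot\|}\subseteq\overline{C}^o$, and this holds in \emph{any} $\sigma$-Dedekind complete Banach lattice: given $f\in\overline{C}^{\|\cdot\|}$, pick $f_k\in C$ with $\|f_k-f\|\le 2^{-k}$ and put $g_k=|f_k-f|$. The increasing partial sums $s_m=\sum_{k\le m}g_k$ are norm-Cauchy, hence converge in norm to their supremum $s$; then $h_m:=s-s_{m-1}$ decreases to $0$ (its infimum exists by $\sigma$-Dedekind completeness and has norm $0$), and since $g_k\le h_k$ the order-convergence criterion yields $f_k\xrightarrow{o}f$, so $f\in\overline{C}^o$. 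In particular an order closed convex set satisfies $C=\overline{C}^o=\overline{C}^{\sigma(X,X_n^\sim)}$, which is property $P1$.

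For necessity I would argue by contraposition: assuming $X$ is not order continuous, I must produce a convex $C$ with $\overline{C}^o\subsetneq\overline{C}^{\sigma(X,X_n^\sim)}$. One case is immediate. If $X_n^\sim$ does not separate the points of $X$, choose $f_0\ne 0$ with $\langle g,f_0\rangle=0$ for all $g\in X_n^\sim$; then $C=\{0\}$ works, since $\overline{C}^o=\{0\}$ while the $\sigma(X,X_n^\sim)$-closure of $\{0\}$ is the (nontrivial) subspace $\{f\in X:\langle g,f\rangle=0\ \text{for all}\ g\in X_n^\sim\}$ and so contains $f_0$.

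The hard part will be the remaining case, in which $X_n^\sim$ separates points. Here non-order-continuity supplies, via \cite[Theorem 5.14, p.~94]{Sc}, a normalized positive disjoint sequence $\{d_n\}$ whose order sum $(o)\sum_n d_n$ lies in $X$, so that $d_n\xrightarrow{o}0$ and $d_n\xrightarrow{\sigma(X,X_n^\sim)}0$. The obstacle is that order convergent nets are eventually order bounded, while the existence of the order sum makes the tails $(o)\sum_{n\ge k}d_n\downarrow 0$ available: for all the naive candidates (the convex hulls of $\{d_n\}$ or of $\{nd_n\}$, half-spaces $\{\langle\psi,\cdot\rangle\le 0\}$ with $\psi\in X^*\setminus X_n^\sim$, and so on) any point of the $\sigma$-closure can be recaptured inside the order closure either by ``spreading'' mass along the disjoint sequence or by convex averaging, so these sets give \emph{equality} and fail as counterexamples. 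To genuinely separate the closures one must build a convex set $C$ and a point $p\in\overline{C}^{\sigma(X,X_n^\sim)}$ for which every \emph{order bounded} net in $C$ stays away from $p$, while an (unavoidably unbounded) $\sigma$-convergent net reaches it. I expect this to require a doubly indexed construction in the spirit of Lemma \ref{orderclosednotsigmaclosed}, engineered so that approaching $p$ forces the approximants' order bounds to blow up, thereby defeating both the averaging and the spreading mechanisms; this construction, rather than any limiting or duality argument, is the crux of the proof.
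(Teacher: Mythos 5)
Your sufficiency half is correct and matches the paper's: order continuity gives $X^*=X_n^\sim$, Mazur's theorem identifies the $\sigma(X,X_n^\sim)$-closure with the norm closure, and your summable-tails argument (choosing $\|f_k-f\|\le 2^{-k}$ and dominating $|f_k-f|$ by the decreasing tails $h_k=s-s_{k-1}$) is a sound self-contained substitute for the paper's citation of \cite[Lemma 3.11]{GX1}; your disposal of the case where $X_n^\sim$ fails to separate points (take $C=\{0\}$) is also correct. The genuine gap is the necessity direction when $X_n^\sim$ does separate points: you describe what a counterexample would have to achieve and say you \emph{expect} a doubly indexed construction, but you construct nothing, and this case is the entire content of the implication. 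The paper does not construct anything here either; it has a short reduction you miss: since every sublattice is in particular a convex set, the hypothesis $\overline{C}^o=\overline{C}^{\sigma(X,X_n^\sim)}$ for all convex $C$ gives $\overline{Y}^o=\overline{Y}^{\sigma(X,X_n^\sim)}$ for all sublattices $Y$, and \cite[Theorem 2.7]{GL} states precisely that this property for sublattices forces a $\sigma$-Dedekind complete Banach lattice to be order continuous. Without that citation, or a proof of its content, your necessity direction remains unproven.

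Moreover, the direction in which you propose to look for the construction would fail in general. The construction of Lemma \ref{orderclosednotsigmaclosed} requires \emph{both} $X$ and $X_n^\sim$ to be non-order-continuous (the elements $z_0$, $z_{ij}$ are produced from non-order-continuity of a band of $X_n^\sim$), whereas in the case at hand $X_n^\sim$ may be order continuous and separating: take $X=L^\infty$, $X_n^\sim=L^1$. There $X$ is not order continuous, yet property $P1$ holds (this is Delbaen's theorem recalled in the introduction; alternatively, $(L^\infty)^*$ is an AL-space, hence order continuous). Consequently no \emph{order closed} convex set in $L^\infty$ can separate the two closures: if $\overline{C}^o$ were order closed, $P1$ would make it $\sigma(X,X_n^\sim)$-closed and hence equal to $\overline{C}^{\sigma(X,X_n^\sim)}$. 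So any convex $C$ witnessing $\overline{C}^o\subsetneq\overline{C}^{\sigma(X,X_n^\sim)}$ in $L^\infty$ must have an order closure that is itself not order closed; the counterexample exploits non-idempotency of the order closure (as the sublattice examples behind \cite[Theorem 2.7]{GL} do), not the ``order bounds blow up so the limit point is unreachable'' mechanism of Lemma \ref{orderclosednotsigmaclosed}. Your plan, as described, is aimed at the wrong kind of set.
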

\begin{proof}
	Suppose that $\overline{C}^o= \overline{C}^{\sigma( X, X_n^\sim)}$ for every convex set $C$ in $ X$. Then $\overline{Y}^o= \overline{Y}^{\sigma( X, X_n^\sim)}$ for every sublattice $Y$ in $ X$. By \cite[Theorem 2.7]{GL}, $ X$ is order continuous. Conversely, if $ X$ is order continuous, then $ X^*= X^\sim_n$ and hence, $ \sigma( X, X_n^\sim)$ is the weak topology on $ X$. By Mazur's theorem, $ \overline{C}^{\sigma( X, X_n^\sim)}=\overline{C}^{\|\cdot\|_{ X}}$. Since every norm convergent sequence has a subsequence that order converges to the same limit (see, e.g., \cite[Lemma 3.11]{GX1}), we conclude that $\overline{C}^{\sigma( X, X_n^\sim)}\subseteq\overline{C}^o$. Therefore, $\overline{C}^o= \overline{C}^{\sigma( X, X_n^\sim)}$. The second part is an immediate consequence of the first part.
\end{proof}

When $X_n^\sim$ is order continuous, we have the following equivalence condition for property $P1$.

\begin{proposition}\label{orderclosed-normorderclosed-orderctsdual}
	Suppose that $X$ is monotonically complete. Assume that $X_n^\sim$ is order continuous and separates points of $X$. Then $X$ has property $P1$ if and only if every norm bounded order closed convex set in $ X$ is $\sigma( X, X_n^\sim)$-closed.
\end{proposition}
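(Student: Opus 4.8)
The plan is to dispose of the trivial implication first and then exploit the Krein--Smulian property already established in Theorem \ref{KSprop}. If $X$ has property $P1$, then by definition every order closed convex set in $X$ is $\sigma(X,X_n^\sim)$-closed, and in particular every \emph{norm bounded} order closed convex set is $\sigma(X,X_n^\sim)$-closed; this gives the forward implication with no further work. For the converse, I would assume that every norm bounded order closed convex set in $X$ is $\sigma(X,X_n^\sim)$-closed and let $C$ be an arbitrary order closed convex set, with the goal of showing that $C$ is $\sigma(X,X_n^\sim)$-closed. Since $X$ is monotonically complete and $X_n^\sim$ is order continuous and separates points of $X$, Theorem \ref{KSprop} guarantees that $\sigma(X,X_n^\sim)$ has the Krein--Smulian property. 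Hence it suffices to verify that the truncation $C\cap k\mathcal{B}$ is $\sigma(X,X_n^\sim)$-closed for each $k\in\mathbb{N}$.

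The heart of the argument is to check that each truncation $C\cap k\mathcal{B}$ falls within the scope of the standing hypothesis. First I would record the relevant properties of $\mathcal{B}$: because $X$ is monotonically complete it is Dedekind complete with the weak Fatou property, so by \cite[Theorem 2.4.20]{MN} the set $\mathcal{B}$ is norm bounded; moreover $\mathcal{B}$ is $\sigma(X,X_n^\sim)$-closed and hence order closed, as noted before Theorem \ref{KSprop}. Since order convergence is compatible with scalar multiplication, $k\mathcal{B}$ is again order closed, and as an intersection of two order closed convex sets the truncation $C\cap k\mathcal{B}$ is order closed and convex; it is also norm bounded, being contained in $k\mathcal{B}$. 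Applying the hypothesis to $C\cap k\mathcal{B}$ then shows it is $\sigma(X,X_n^\sim)$-closed for every $k$.

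Finally, the Krein--Smulian property supplied by Theorem \ref{KSprop} upgrades the closedness of all the truncations $C\cap k\mathcal{B}$ to closedness of $C$ itself, which is exactly property $P1$. The only point requiring care is the bookkeeping around $\mathcal{B}$ --- confirming simultaneously that it is norm bounded and order closed so that each $C\cap k\mathcal{B}$ is a legitimate norm bounded order closed convex set --- but these are precisely the facts recorded just before Theorem \ref{KSprop}, so I do not expect a genuine obstacle; the proposition is essentially a direct corollary of the Krein--Smulian property for $\sigma(X,X_n^\sim)$.
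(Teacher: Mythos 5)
Your proof is correct and follows exactly the paper's own argument: the forward direction is trivial, and the converse truncates an arbitrary order closed convex set $C$ by the norm bounded, order closed convex set $\mathcal{B}$ and applies the Krein--Smulian property from Theorem \ref{KSprop} (available here since $X_n^\sim$ is order continuous). Your extra bookkeeping verifying that each $C\cap k\mathcal{B}$ is a norm bounded order closed convex set is precisely what the paper asserts more briefly.
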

\begin{proof}
The "only if" part is clear. Conversely, suppose that every norm bounded order closed convex set in $ X$ is $\sigma( X, X_n^\sim)$-closed. Let $C$ be an order closed convex set in $X$. Since $\mathcal{B}$ is a norm bounded order closed convex set in $X$, each $C\cap k\mathcal{B}$ is also a norm bounded order closed convex set in $X$. By the hypothesis, each $C\cap k\mathcal{B}$ is $\sigma(X, X_n^\sim)$-closed. Since $\sigma( X, X_n^\sim)$ has the Krein-Smulian property (Theorem \ref{KSprop}), we conclude that $C$ is $\sigma( X, X_n^\sim)$-closed.
\end{proof}

In case that $X$ is an Orlicz space, every norm bounded order closed convex set in $ X$ is $\sigma( X, X_n^\sim)$-closed (\cite[Theorem 3.4]{GLX}). In fact, $\overline{C}^o=\overline{C}^{|\sigma|_s( X, X_n^\sim)}=\overline{C}^{\sigma( X, X_n^\sim)}$ for every norm bounded convex set $C$ in $ X$.  However, this property may fail in a general Banach lattice (see Section 3.2). Proposition \ref{orderclosed-normorderclosed-orderctsdual} motivates us to investigate the following properties.

\begin{definition}
	A Banach lattice $ X$ is said to have property 
	\begin{enumerate}
		\item[$P2$] if every norm bounded order closed convex set in $ X$ is $\sigma( X, X_n^\sim)$-closed. 
		\item[$P3$] if $\overline{C}^o=\overline{C}^{\sigma( X, X_n^\sim)}$ for every norm bounded convex set $C$ in $ X$.
		\item[$P4$] if every order closed norm bounded convex set in $ X$ is $|\sigma|( X, X_n^\sim)$-sequentially closed. 
		\item[$P5$] if $\overline{C}^o=\overline{C}^{|\sigma|_s( X, X_n^\sim)}$ for every norm bounded convex set $C$ in $ X$.
	\end{enumerate}
\end{definition}

The following relations are either immediate or follow from one of Theorem \ref{sufficient-P1}, Proposition \ref{a-orderclosed=sigmaclosed-orderctsnorm} or Proposition \ref{orderclosed-normorderclosed-orderctsdual}.

\begin{figure}[htbp]
\begin{center}
\begin{tikzcd}[arrows=Rightarrow]
{}&{}& X\ \text{or}\  X_n^\sim \text{ is OC}&{}\\
P3 \arrow{r}{} \arrow{d}{}&P2 \arrow[yshift=0.7ex]{r}{\substack{ \text{MC+ ($*$)}\\ +X_n^\sim\ \text{is OC}}}  \arrow{d}{}  & P1 \arrow[l, yshift=-0.7ex] \arrow[shift right, swap]{u}{\substack{\text{DC + wk Fatou+($*$)}}} &  X\ \text{is OC} \arrow{l}\\
P5 \arrow{r}{}&P4 & {} 
\end{tikzcd}
\caption{Basic relations between $P1$, $P2$, $P3$, $P4$ and $P5$. \newline(OC = order continuous, MC = monotonically complete, DC = Dedekind complete, ($*$) = $X_n^\sim$ separates points of $X$)}
\label{fig:basic relations}
\end{center}
\end{figure}

In case that $ X_n^\sim$ is order continuous and contains a strictly positive element, $P2$ is equivalent to $P4$ and $P3$ is equivalent to $P5$. 

\begin{proposition}\label{order-dual-oc-prop}
	Suppose that $ X_n^\sim$ is order continuous and contains a strictly positive element. Then $\overline{C}^{|\sigma|_s( X, X_n^\sim)}=\overline{C}^{\sigma( X, X_n^\sim)}$ for every norm bounded convex set $C$ in $ X$. In particular, $P2$ is equivalent to $P4$ and $P3$ is equivalent to $P5$.
\end{proposition}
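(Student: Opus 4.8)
The plan is to prove the two inclusions of $\overline{C}^{|\sigma|_s(X,X_n^\sim)}=\overline{C}^{\sigma(X,X_n^\sim)}$ separately. The inclusion $\overline{C}^{|\sigma|_s(X,X_n^\sim)}\subseteq\overline{C}^{\sigma(X,X_n^\sim)}$ holds for arbitrary sets by the chain of inclusions recorded at the start of this section, so the whole content is the reverse inclusion for norm bounded convex $C$. Here I would exploit convexity at the outset: since $\sigma(X,X_n^\sim)$ and $|\sigma|(X,X_n^\sim)$ are locally convex topologies with the same dual $X_n^\sim$, the equality $\overline{C}^{\sigma(X,X_n^\sim)}=\overline{C}^{|\sigma|(X,X_n^\sim)}$ from that same chain lets me replace the target topology by $|\sigma|(X,X_n^\sim)$. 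Moreover, writing $\|x\|_{\mathcal B}=\sup\{\langle x',|x|\rangle:x'\in(X_n^\sim)_+,\ \|x'\|_{X_n^\sim}\le1\}$ and $M':=\sup_{x\in C}\|x\|_{\mathcal B}\le\sup_{x\in C}\|x\|_X<\infty$, the fact that $\mathcal B$ is $\sigma(X,X_n^\sim)$-closed forces $\overline{C}^{\sigma(X,X_n^\sim)}\subseteq M'\mathcal B$. Thus everything takes place inside the $\mathcal B$-bounded set $M'\mathcal B$, and the proposition reduces to showing that on $\mathcal B$-bounded sets the topology $|\sigma|(X,X_n^\sim)$ is metrized by the single lattice seminorm $\rho_g(x):=\langle g,|x|\rangle$, where $g$ is the given strictly positive element.

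The crux is the following estimate. First I would show that $g$ is a weak unit of $X_n^\sim$: since $g$ is order continuous its null ideal $N_g=\{x\in X:\langle g,|x|\rangle=0\}$ is a band, and strict positivity gives $N_g=\{0\}$, so the carrier of $g$ is all of $X$; any $h\in(X_n^\sim)_+$ with $h\wedge g=0$ then has carrier disjoint from that of $g$, hence carrier $\{0\}$, forcing $h=0$. Because $X_n^\sim$ is order continuous (and Dedekind complete, being an ideal in the Dedekind complete lattice $X^*$), a weak unit is a quasi-interior point, so $(h-ng)^+=h-h\wedge ng\downarrow0$ and therefore $\|(h-ng)^+\|_{X_n^\sim}\to0$ for every $h\in(X_n^\sim)_+$. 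This yields, for all $x\in X$, $n\in\mathbb N$ and $h\in(X_n^\sim)_+$, the bound
\[
\langle h,|x|\rangle\le\langle h\wedge ng,|x|\rangle+\langle(h-ng)^+,|x|\rangle\le n\,\rho_g(x)+\|(h-ng)^+\|_{X_n^\sim}\,\|x\|_{\mathcal B}.
\]
On a $\mathcal B$-bounded set this shows that smallness of $\rho_g$ controls $\langle h,|x|\rangle$ uniformly, which is exactly the metrizability claim.

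To conclude, take $f\in\overline{C}^{\sigma(X,X_n^\sim)}=\overline{C}^{|\sigma|(X,X_n^\sim)}$. Since $\rho_g$ is one of the seminorms generating $|\sigma|(X,X_n^\sim)$, the $\rho_g$-topology is coarser, so $f$ lies in the $\rho_g$-closure of $C$; as $\rho_g$ is (semi)metrizable there is a sequence $\{c_k\}\subseteq C$ with $\rho_g(c_k-f)\to0$. Both $f$ and all $c_k$ lie in $M'\mathcal B$, so $\|c_k-f\|_{\mathcal B}\le2M'$, and the displayed estimate gives, for each fixed $h\in(X_n^\sim)_+$, that $\langle h,|c_k-f|\rangle\to0$ (choose $n$ making the second term small, then let $k\to\infty$). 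Hence $|c_k-f|\xrightarrow{\sigma(X,X_n^\sim)}0$, i.e. $c_k\xrightarrow{|\sigma|(X,X_n^\sim)}f$, so $f\in\overline{C}^{|\sigma|_s(X,X_n^\sim)}$, completing the equality. The \emph{in particular} clause is then immediate: for a norm bounded convex (in particular order closed) set $C$ the two closures coincide, so $C$ is $\sigma(X,X_n^\sim)$-closed if and only if it is $|\sigma|(X,X_n^\sim)$-sequentially closed, giving $P2\Leftrightarrow P4$; substituting the equality into $\overline{C}^o=\overline{C}^{\sigma(X,X_n^\sim)}$ versus $\overline{C}^o=\overline{C}^{|\sigma|_s(X,X_n^\sim)}$ gives $P3\Leftrightarrow P5$.

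I expect the main obstacle to be the two boundedness/metrizability subtleties: that $\overline{C}^{\sigma(X,X_n^\sim)}$ need not be norm bounded (forcing the use of the weaker $\mathcal B$-norm together with the $\sigma(X,X_n^\sim)$-closedness of $\mathcal B$), and the verification that the strictly positive functional is a quasi-interior point of $X_n^\sim$, which is precisely where order continuity of $X_n^\sim$ is indispensable.
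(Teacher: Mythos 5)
Your proof is correct, but it takes a genuinely different route from the paper's. The paper's argument is essentially a short appeal to the uo-convergence machinery: given $f\in\overline{C}^{\sigma(X,X_n^\sim)}$, it cites \cite[Theorem 4.1]{GLX1} to produce a sequence $\{f_n\}\subseteq C$ with $|f_n-f|\xrightarrow{uo}0$, notes that norm boundedness then gives $|f_n-f|\xrightarrow{\sigma(X,X_{uo}^\sim)}0$ by the very definition of the uo-dual, and concludes via the identification $X_{uo}^\sim=(X_n^\sim)_a=X_n^\sim$ forced by order continuity. You avoid uo-convergence and the uo-dual entirely: after the same Mazur reduction to the $|\sigma|(X,X_n^\sim)$-closure, you show the strictly positive $g$ is a weak unit, hence (by order continuity of $X_n^\sim$) a quasi-interior point of $X_n^\sim$, and the splitting $h=h\wedge ng+(h-ng)^+$ gives the estimate $\langle h,|x|\rangle\le n\langle g,|x|\rangle+\|(h-ng)^+\|_{X_n^\sim}\,\|x\|_{\mathcal B}$, so that on $\mathcal B$-bounded sets the topology $|\sigma|(X,X_n^\sim)$ is pseudometrized by the single seminorm $\langle g,|\cdot|\rangle$ and sequences come for free. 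The two arguments share a skeleton---extract a sequence along the strictly positive functional, then upgrade its mode of convergence using order continuity of $X_n^\sim$ (indeed the proof of the cited theorem itself starts from $\langle g,|f_n-f|\rangle\to0$, as the paper's Proposition \ref{cbsp-prop} recalls)---but the upgrade step is where you diverge: the paper passes through uo-convergence and the duality $X_{uo}^\sim=(X_n^\sim)_a$, while you pass through the quasi-interior point estimate. The paper's route is shorter given the external theorem; yours is self-contained modulo two textbook facts (disjoint order continuous functionals have disjoint carriers, and weak units are quasi-interior points when the norm is order continuous), it isolates the local pseudometrizability of $|\sigma|(X,X_n^\sim)$ as the underlying reason sequential closures suffice, and it in fact proves marginally more, since your argument only uses $\mathcal B$-boundedness of $C$ rather than norm boundedness.
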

\begin{proof} Let $C$ be a norm bounded convex set in $ X$ and $f\in \overline{C}^{\sigma( X, X_n^\sim)}$. According to \cite[Theorem 4.1]{GLX1}, there exists a sequence $\{f_n\}\subseteq C$ such that $|f_n-f|\xrightarrow{uo} 0$. Since $\{f_n\}$ is norm bounded, $|f_n-f|\xrightarrow{\sigma( X, X_{uo}^\sim)}0$. Since $X_n^\sim$ is order continuous, $X_n^\sim=(X_n^\sim)_a=X_{uo}^\sim$. Thus, $|f_n-f|\xrightarrow{\sigma( X, X_n^\sim)}0$ and hence, $f\in \overline{C}^{|\sigma|_s( X, X_n^\sim)}$. This shows that $\overline{C}^{\sigma( X, X_n^\sim)}\subseteq \overline{C}^{|\sigma|_s( X, X_n^\sim)}$. The reverse inclusion is clear. The second part follows directly from the first part.
\end{proof}

Now, we analyze the weakest of properties in Figure \ref{fig:basic relations}, namely $P4$. 

\begin{proposition}\label{cvxnormbddseq-oclosed}
	Let $\{f_n\}$ be a norm bounded disjoint sequence in $ X_+$.  If $\{f_n\}$ is isomorphic to $\ell^1$ basis, then 
	$\overline{\text{co}(\{f_n\})}^o$ is order closed and 
	\[
	\overline{\text{co}(\{f_n\})}^o=\left\{\sum_{i=1}^\infty a_if_i:a_i\geq0, \sum_{i=1}^\infty a_i=1  \right\}.
	\]
\end{proposition}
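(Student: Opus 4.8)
The plan is to fix the closed sublattice $Y=\overline{\operatorname{span}}\{f_n\}$ and exploit that $\ell^1$-equivalence makes the map $T\colon\ell^1\to Y$, $Te_i=f_i$, an isomorphism onto $Y$; then $D:=\{\sum_i a_if_i:a_i\ge0,\ \sum_ia_i=1\}$ is $T(\Delta)$ for the standard simplex $\Delta$, and since $\sum_ia_i<\infty$ each series $\sum_ia_if_i$ converges in norm. Because $\operatorname{co}(\{f_n\})\subseteq D$, it suffices to prove the two inclusions $D\subseteq\overline{\operatorname{co}(\{f_n\})}^o$ and $\overline{\operatorname{co}(\{f_n\})}^o\subseteq D$, the latter by showing $D$ is order closed; together these give both assertions.

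For the inclusion $D\subseteq\overline{\operatorname{co}(\{f_n\})}^o$, take $x=\sum_ia_if_i\in D$. The partial sums $s_N=\sum_{i\le N}a_if_i$ increase and converge to $x$ in norm, so $s_N\uparrow x$ and $s_N\xrightarrow{o}x$. To stay inside the convex hull I would rescale: with $\sigma_N=\sum_{i\le N}a_i$, set $x_N=\sigma_N^{-1}s_N\in\operatorname{co}(\{f_n\})$ for $N$ large enough that $\sigma_N>0$. Since $s_N\le x$ and $\sigma_N^{-1}-1\ge0$, one gets $|x_N-x|\le(\sigma_N^{-1}-1)x+(x-s_N)$, and the right-hand side decreases to $0$; hence $x_N\xrightarrow{o}x$ and $x\in\overline{\operatorname{co}(\{f_n\})}^o$.

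The bulk of the work is showing $D$ is order closed. Let $x^\gamma=\sum_ia_i^\gamma f_i$ be a net in $D$ with $x^\gamma\xrightarrow{o}x$. I would first manufacture a positive, order-continuous biorthogonal system together with a summing functional: using the band projections $Q_i$ onto $B(f_i)$ and positive $g_i\in(X_n^\sim)_+$ with $\langle g_i,f_i\rangle=1$, put $y_i=Q_i^*g_i\in(X_n^\sim)_+$, a disjoint sequence with $\langle y_i,f_j\rangle=\delta_{ij}$, and set $\bar y=(o)\sum_iy_i$, so that $\langle\bar y,f_i\rangle=1$ for every $i$. Granting $\bar y\in X_n^\sim$, order continuity of each $y_i$ gives $a_i:=\langle y_i,x\rangle=\lim_\gamma\langle y_i,x^\gamma\rangle=\lim_\gamma a_i^\gamma\in[0,1]$. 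For any $g\in(X_n^\sim)_+$ I would then split $\langle g,x^\gamma\rangle=\sum_ia_i^\gamma\langle g,f_i\rangle$ at an index $N$, estimating the tail by $\sum_{i>N}|a_i^\gamma-a_i|\langle g,f_i\rangle\le\langle g,|x^\gamma-x|\rangle\to0$ while the finite part converges coordinatewise; this yields $\langle g,x\rangle=\sum_ia_i\langle g,f_i\rangle=\langle g,\sum_ia_if_i\rangle$, and since $X_n^\sim$ separates points, $x=\sum_ia_if_i$. Finally $\sum_ia_i=\langle\bar y,x\rangle=\lim_\gamma\langle\bar y,x^\gamma\rangle=\lim_\gamma 1=1$ by order continuity of $\bar y$, so $x\in D$.

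The step I expect to be the main obstacle is the existence of $\bar y$ in $X_n^\sim$. Its partial sums $t_n=\sum_{i\le n}y_i$ increase, and because $X_n^\sim$ is a band in the monotonically complete dual $X^*$, it is enough to know that $\{t_n\}$ is norm bounded; this is precisely where the $\ell^1$-equivalence must be spent. On $Y$ the functional $\sum_ib_if_i\mapsto\sum_ib_i$ has norm at most $c^{-1}$, and the task is to upgrade this to a uniform bound on the $t_n$ over all of $X$ — equivalently, that $u\mapsto(\langle y_i,u\rangle)_i$ maps $X$ boundedly into $\ell^1$. Producing this order-continuous summing functional is the technical heart; by contrast the rescaling in the forward inclusion, the tail estimate identifying $x$, and the final evaluation $\langle\bar y,x\rangle=1$ are all routine once $\bar y$ and the biorthogonal $y_i$ are available.
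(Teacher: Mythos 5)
Your reduction to showing that $D=\left\{\sum_i a_if_i: a_i\ge 0,\ \sum_i a_i=1\right\}$ is order closed, and your rescaled-partial-sum argument for $D\subseteq\overline{\text{co}(\{f_n\})}^o$, are both fine and agree with the paper's first step. The problem is the route you take to order closedness of $D$: it rests on objects that need not exist. First, the proposition is stated for an \emph{arbitrary} Banach lattice: there is no hypothesis that $X_n^\sim$ separates points (it can even be trivial, e.g.\ $X=C[0,1]$), nor that $X$ is $\sigma$-Dedekind complete, so neither the positive order-continuous functionals $g_i$ with $\langle g_i,f_i\rangle=1$ nor the band projections $Q_i$ onto $B(f_i)$ are available in general. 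Second, and decisively, the step you flag as the ``technical heart'' --- the existence of $\bar y=(o)\sum_i y_i\in X_n^\sim$ with $\langle \bar y,f_i\rangle=1$ for all $i$ --- is not merely hard, it is false in general: such a $\bar y$ would force $\langle \bar y,f_i\rangle\not\to 0$, so it cannot exist for any sequence with $f_i\xrightarrow{\sigma(X,X_n^\sim)}0$. The paper's own Ces\`aro example (Section 3.2, $X=\text{Ces}_\infty[0,1]$) exhibits exactly such a sequence: norm bounded, disjoint, positive, equivalent to the $\ell^1$ basis, and $\sigma(X,X_n^\sim)$-null. Worse, the principal application of this proposition (Theorem \ref{necessaryP4}, $P4\Rightarrow DOCP$) applies it precisely to $\sigma(X,X_n^\sim)$-null sequences, i.e.\ exactly the situation your $\bar y$ cannot survive. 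So the plan cannot be repaired by working harder on that step; no order-continuous summing functional exists to be produced.

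The paper's proof avoids dual functionals entirely and works on the coefficient side. Given a net $h_\alpha=\sum_i a_{\alpha,i}f_i$ in $D$ with $h_\alpha\xrightarrow{o}h$, pass to a tail (subnet) that is order bounded by some $g\in X_+$. Disjointness gives $\sum_{i\le k}a_if_i\le g$ for every $k$, where $a_i:=\sup_\alpha a_{\alpha,i}$, and the lower $\ell^1$-estimate then yields $\frac{1}{M}\sum_{i\le k}a_i\le\|g\|_X$, so $\{a_i\}\in\ell^1$. Hence the coefficient net lies in the order interval $[0,\{a_i\}]$ of $\ell^1$, which is norm compact; a subnet of coefficients converges in norm (and, being order bounded, in order) to some $\{b_i\}\ge0$ with $\sum_i b_i=1$, and then $h_{\alpha_\beta}\xrightarrow{o}\sum_i b_if_i$, forcing $h=\sum_i b_if_i\in D$. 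Note where the $\ell^1$-equivalence is ``spent'': not on manufacturing a functional in $X_n^\sim$, but on proving summability of the dominating coefficient sequence, which is what buys the compactness.
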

\begin{proof}
	Let $C:=\left\{\sum_{i=1}^\infty a_if_i:a_i\geq0, \sum_{i=1}^\infty a_i=1  \right\}$. Since
	\[
	\text{co}(\{f_n\})\subseteq C\subseteq \overline{\text{co}(\{f_n\})}^{\|\cdot\|_{ X}}\subseteq \overline{\text{co}(\{f_n\})}^o,
	\]
	we only need to show that $C$ is order closed. Let $h\in \overline{C}^o$. Then there exists a net $\{h_\alpha\}\subseteq C$ such that $h_\alpha\xrightarrow{o}h$. After passing to a subnet, we may assume that $\{h_\alpha\}$ is order bounded, i.e., $h_\alpha\leq g$ for some $g\in X$. Write
	\[
	h_\alpha=\sum_{i=1}^\infty a_{\alpha,i}f_i
	\]
	for some $a_{\alpha,i}\geq0$ with $\sum_{i=1}^\infty a_{\alpha,i}=1$. Clearly, $\{a_{\alpha,i}\}_i\in\ell^1$ for every $\alpha$. For every $i$, let $a_i:=\sup_{\alpha}a_{\alpha,i}$. Clearly, $a_i\in\mathbb{R}_+$ since  $\{a_{\alpha,i}f_i\}_\alpha$ is order bounded (by $g$). Since $\{f_n\}$ is isomorphic to $\ell^1$ basis, there is an $M>0$ such that
	\[
	\frac{1}{M}\sum_{i=1}^k|c_i|\leq \left\|\sum_{i=1}^kc_if_i\right\|_{X}
	\]
	for every $k$ and $\{c_i\}$. It follows that
	\[
	\frac{1}{M}\sum_{i=1}^k|a_i|\leq \left\|\sum_{i=1}^ka_if_i\right\|_{X}\leq \|g\|_{X}
	\]
	for every $k$. This implies that $\{a_i\}_i\in\ell^1$ and hence, $\{\{a_{\alpha,i}\}_i\}_\alpha$ is an order bounded net in $\ell^1$. Note that order intervals in $\ell^1$ are norm compact. Then $\{\{a_{\alpha,i}\}_i\}_\alpha$ is relatively norm compact in $\ell^1$ and hence, it has a subnet $\{\{a_{\alpha_\beta,i}\}_i\}_\beta$ that norm converges to some $\{b_i\}_i\in \ell^1$. Since $\{a_{\alpha,i}\}_\alpha$ is order bounded in $\ell^1$, $\{\{a_{\alpha_{\beta},i}\}_i\}_\beta$ also order converges to $\{b_i\}_i$. It follows that $b_{i}\geq0$ for every $i$ and $\sum_{i=1}^\infty b_{i}=1$. Let
	\[
	\bar{h}=\sum_{i=1}^\infty b_{i}f_i
	\]
	Then $\bar{h}\in C$ and $h_{\alpha_\beta}\xrightarrow{o}\bar{h}$. Since $h_\alpha\xrightarrow{o}h$, we deduce that $h=\bar{h}\in C$. Thus, $C$ is order closed.	
\end{proof}

\begin{corollary}\label{0notinconvexfn}
	Let $\{f_n\}$ be a norm bounded disjoint sequence in $ X_+$.  If $\{f_n\}$ is isomorphic to $\ell^1$ basis, then $0\notin \overline{\text{co}(\{f_n\})}^o$. 
\end{corollary}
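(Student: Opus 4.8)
The plan is to read off the conclusion directly from Proposition~\ref{cvxnormbddseq-oclosed}. That proposition identifies
\[
\overline{\text{co}(\{f_n\})}^o=C:=\left\{\sum_{i=1}^\infty a_if_i:a_i\geq0,\ \sum_{i=1}^\infty a_i=1\right\},
\]
so it suffices to verify that $0\notin C$.

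To see this, I would argue by contradiction: suppose $0=\sum_{i=1}^\infty a_if_i$ for some scalars $a_i\geq0$ with $\sum_{i=1}^\infty a_i=1$. Here the order sum is the supremum of the increasing sequence of partial sums $s_k:=\sum_{i=1}^k a_if_i$ (each $f_i\geq0$ and $a_i\geq0$), so $0\leq s_k\leq 0$ and hence $s_k=0$ for every $k$. Now I invoke the $\ell^1$ lower estimate furnished by the hypothesis that $\{f_n\}$ is isomorphic to the $\ell^1$-basis: there is $M>0$ with $\frac{1}{M}\sum_{i=1}^k|c_i|\leq\|\sum_{i=1}^kc_if_i\|_{X}$ for all $k$ and all scalars $\{c_i\}$. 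Applying this to $c_i=a_i$ gives $\frac{1}{M}\sum_{i=1}^k a_i\leq\|s_k\|_{X}=0$, whence $\sum_{i=1}^k a_i=0$ for every $k$ and therefore $\sum_{i=1}^\infty a_i=0$. This contradicts $\sum_{i=1}^\infty a_i=1$, so $0\notin C=\overline{\text{co}(\{f_n\})}^o$.

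There is essentially no obstacle here: the entire content has already been carried by Proposition~\ref{cvxnormbddseq-oclosed}, and the corollary merely records the immediate observation that the normalization $\sum_i a_i=1$ in the description of the order closure rules out the zero element. The one point that must be stated cleanly is that a positive order sum dominates each of its partial sums, so that the vanishing of the sum forces the partial sums---and then, via the $\ell^1$ lower bound, the coefficients---to vanish.
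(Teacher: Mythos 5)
Your proposal is correct and is exactly the paper's (implicit) argument: the paper states the corollary as an immediate consequence of Proposition~\ref{cvxnormbddseq-oclosed}, and your verification that $0\notin C$ --- partial sums of a vanishing positive order sum must vanish, so the $\ell^1$ lower estimate forces all coefficients to be zero, contradicting $\sum_i a_i=1$ --- is the intended routine detail.
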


\begin{proposition}\label{disjointseq-o-sigma-seq}
	Suppose that $ X$ has property $P4$. Let $\{f_n\}$ be a norm bounded disjoint sequence in $ X_+$.  If $\{f_n\}$ is isomorphic to $\ell^1$ basis, then $\overline{\text{co}(\{f_n\})}^o=\overline{\text{co}(\{f_n\})}^{|\sigma|_s( X, X_n^\sim)}$.
\end{proposition}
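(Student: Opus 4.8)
The plan is to sandwich the $|\sigma|_s$-closure between the order closure and itself, exploiting that the order closure of $\mathrm{co}(\{f_n\})$ has already been computed explicitly in Proposition \ref{cvxnormbddseq-oclosed}. Write $D:=\mathrm{co}(\{f_n\})$ and $C:=\overline{D}^o$. The inclusion $\overline{D}^o\subseteq\overline{D}^{|\sigma|_s(X,X_n^\sim)}$ holds for an arbitrary subset of $X$, being part of the chain of inclusions recorded at the beginning of Section 2. Hence the entire content of the proposition is the reverse inclusion $\overline{D}^{|\sigma|_s(X,X_n^\sim)}\subseteq\overline{D}^o$.

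First I would record that $C=\overline{D}^o$ is simultaneously order closed, convex, and norm bounded. Order-closedness and the explicit description $C=\{\sum_i a_if_i:a_i\geq0,\ \sum_i a_i=1\}$ are exactly the content of Proposition \ref{cvxnormbddseq-oclosed}. Convexity is immediate from this formula, since a convex combination of two elements of this form is again of the same form. Norm boundedness follows from the $\ell^1$-isomorphism upper estimate $\|\sum_i c_if_i\|_X\leq M\sum_i|c_i|$ applied with $\sum_i a_i=1$, which gives $\|x\|_X\leq M$ for every $x\in C$ (alternatively, $D$ is norm bounded because $\{f_n\}$ is, and the formula shows $C$ inherits the same bound).

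Having verified these three properties, I would invoke property $P4$ directly: since $C$ is an order closed, norm bounded, convex set, it is $|\sigma|(X,X_n^\sim)$-sequentially closed, i.e. $\overline{C}^{|\sigma|_s(X,X_n^\sim)}=C$. Monotonicity of the sequential closure operator together with $D\subseteq C$ then yields
\[
\overline{D}^{|\sigma|_s(X,X_n^\sim)}\subseteq\overline{C}^{|\sigma|_s(X,X_n^\sim)}=C=\overline{D}^o.
\]
Combining this with the trivial inclusion gives $\overline{D}^o=\overline{D}^{|\sigma|_s(X,X_n^\sim)}$, which is the assertion.

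I do not expect a genuine analytic obstacle here; the proposition is essentially a formal consequence of Proposition \ref{cvxnormbddseq-oclosed} (which supplies precisely the three hypotheses required to feed a set into $P4$) together with the definition of $P4$. The only point requiring a moment's care is to apply $P4$ to the order closure $C$ rather than to $D$ itself---$D$ need not be order closed---and to confirm that all three standing hypotheses of $P4$ (order closed, norm bounded, convex) are genuinely met by $C$, which is exactly what the preceding proposition guarantees.
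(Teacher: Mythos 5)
Your proposal is correct and is essentially the paper's own proof: both apply $P4$ to the order closure $\overline{\mathrm{co}(\{f_n\})}^o$, whose order-closedness (plus convexity and norm boundedness, via the explicit description) is supplied by Proposition~\ref{cvxnormbddseq-oclosed}, and then sandwich the sequential closure. Your write-up merely makes explicit the convexity and norm-boundedness checks that the paper leaves implicit.
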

\begin{proof}
	By Proposition \ref{cvxnormbddseq-oclosed}, $\overline{\text{co}(\{f_n\})}^o$ is order closed. Hence, it is  $|\sigma|( X, X_n^\sim)$-sequentially closed by $P4$. It follows that
	\[
	\overline{\text{co}(\{f_n\})}^{|\sigma|_s( X, X_n^\sim)}\subseteq \overline{\overline{\text{co}(\{f_n\})}^o}^{|\sigma|_s( X, X_n^\sim)}= \overline{\text{co}(\{f_n\})}^{o}\subseteq \overline{\text{co}(\{f_n\})}^{|\sigma|_s( X, X_n^\sim)}.
	\]
	Therefore, $\overline{\text{co}(\{f_n\})}^o=\overline{\text{co}(\{f_n\})}^{|\sigma|_s( X, X_n^\sim)}$. 
\end{proof}

\begin{lemma}\label{weak-l1}
	Let $\{f_n\}$ be a norm bounded disjoint sequence in $ X_+$. Then $\{f_n\}$ converges weakly to 0 if and only if no subsequence of $\{f_n\}$ is isomorphic to $\ell^1$ basis.
\end{lemma}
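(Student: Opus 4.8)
The plan is to establish the equivalence by proving each implication in contrapositive form, the substantive content being the construction of an $\ell^1$-equivalent subsequence out of a disjoint sequence that fails to be weakly null.

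First, I would dispose of the implication that an $\ell^1$-equivalent subsequence forces $\{f_n\}$ to be non-weakly-null (the contrapositive of the ``only if'' direction). Suppose $\{f_{n_k}\}$ is equivalent to the unit vector basis of $\ell^1$, and let $T\colon \ell^1\to Y:=\overline{\operatorname{span}}\{f_{n_k}\}$ be the induced isomorphism with $Te_k=f_{n_k}$. If $\{f_{n_k}\}$ were weakly null in $X$, then it would be weakly null in $Y$ (every $\phi\in Y^*$ extends to $X$ by Hahn--Banach), and applying the bounded operator $T^{-1}$ would make $\{e_k\}$ weakly null in $\ell^1$; by the Schur property of $\ell^1$ this forces $\|e_k\|_{\ell^1}\to 0$, which is absurd. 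Hence no subsequence of a weakly null $\{f_n\}$ can be $\ell^1$-equivalent.

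For the converse I would prove: if $\{f_n\}$ is not weakly null, some subsequence is $\ell^1$-equivalent. Failure of weak nullity yields $g\in X^*$, $\varepsilon>0$ and a subsequence, which I relabel as $\{f_n\}$, with $|\langle g,f_n\rangle|\geq\varepsilon$ for all $n$. Since $f_n\geq 0$ we have $\langle |g|,f_n\rangle\geq|\langle g,f_n\rangle|\geq\varepsilon$, so after replacing $g$ by $|g|$ I may assume $g\in X^*_+$.

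The crux is then the two-sided $\ell^1$ estimate on this subsequence. The upper bound $\|\sum_i a_i f_i\|\leq M\sum_i|a_i|$, with $M:=\sup_n\|f_n\|<\infty$, is immediate from the triangle inequality. For the lower bound I exploit disjointness: as the $f_i$ are pairwise disjoint and positive, $|\sum_i a_i f_i|=\sum_i|a_i|f_i$, so
\[
\|g\|\,\Bigl\|\sum_i a_i f_i\Bigr\|\;\geq\;\Bigl\langle g,\ \Bigl|\sum_i a_i f_i\Bigr|\Bigr\rangle\;=\;\sum_i|a_i|\,\langle g,f_i\rangle\;\geq\;\varepsilon\sum_i|a_i|.
\]
This gives $\|\sum_i a_i f_i\|\geq(\varepsilon/\|g\|)\sum_i|a_i|$, so the relabeled subsequence is equivalent to the $\ell^1$ basis. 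I expect no serious obstacle here; the only points needing a word of justification are the disjointness identity $|\sum_i a_i f_i|=\sum_i|a_i|f_i$ and the passage to a positive functional, both standard Banach-lattice facts, together with the invocation of Schur's property of $\ell^1$ in the first direction.
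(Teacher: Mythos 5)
Your proposal is correct and takes essentially the same route as the paper: the substantive direction rests on the same two-sided estimate, with the lower bound coming from the disjointness identity $\left|\sum_i a_i f_i\right|=\sum_i|a_i|f_i$ paired against a functional witnessing the failure of weak nullity. The only cosmetic differences are that you positivize the functional (replacing $g$ by $|g|$, using $f_n\geq 0$) where the paper instead extracts a subsequence on which $\langle g,f_{n_k}\rangle>\epsilon$ with a fixed sign, and you invoke the Schur property of $\ell^1$ for the easy direction where the paper simply observes that the $\ell^1$ basis is not weakly null.
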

\begin{proof}${}$
The "only if" part is clear since $\ell^1$ basis is not weakly null. Conversely, suppose that $\{f_n\}$ does not converge weakly to 0. We will show that $\{f_n\}$ has a subsequence which is isomorphic to $\ell^1$ basis. Since $\{f_n\}$ does not converge weakly to 0, there exist $g\in  X^*$, $\epsilon>0$ and a subsequence $\{f_{n_k}\}$ of $\{f_n\}$ such that $\langle g,f_{n_k}\rangle>\epsilon$ for every $k$. It follows that  
	\[
	\left\|\sum_{k=1}^ma_kf_{n_k}\right\|_{ X}=\left\|\sum_{k=1}^m|a_k|f_{n_k}\right\|_{ X}\geq \frac{1}{\|g\|_{ X^*}}\left\langle g,\sum_{k=1}^m|a_k|f_{n_k}\right\rangle\geq \frac{\epsilon}{\|g\|_{ X^*}}\sum_{k=1}^m|a_k|
	\]
	for every $m$ and $\{a_k\}$. Thus, $\{f_{n_k}\}$ is isomorphic to $\ell^1$ basis. 
\end{proof}

The preceding results suggest the following definition.

\begin{definition}
	A Banach lattice $ X$ is said to have the disjoint order continuity property $(DOCP)$ if for every norm bounded disjoint sequence $\{f_n\}$ in $ X_+$, $f_n\xrightarrow{\sigma( X, X_n^\sim)}0$ implies $\{f_n\}$ converges weakly to 0.
\end{definition}

The reason for the terminology comes from the following simple proposition.

\begin{proposition}\label{order-continuity-property}
	A Banach lattice $X$ is order continuous if and only if $X$ is $\sigma$-Dedekind complete  and for every norm bounded sequence $\{f_n\}$ in $ X_+$, $f_n\xrightarrow{\sigma( X, X_n^\sim)}0$ implies $\{f_n\}$ converges weakly to 0.
\end{proposition}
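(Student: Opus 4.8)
The forward implication is essentially a matter of recording two standard facts. If $X$ is order continuous then, as noted in the preliminaries, $X_n^\sim = X^*$, so the topology $\sigma(X, X_n^\sim)$ is exactly the weak topology $\sigma(X, X^*)$. Hence the hypothesis $f_n \xrightarrow{\sigma(X, X_n^\sim)} 0$ literally says that $\{f_n\}$ is weakly null, and the desired implication is trivially true (indeed for every sequence, with no need of norm boundedness or positivity). Moreover order continuity forces $X$ to be Dedekind complete, hence $\sigma$-Dedekind complete, by \cite[Theorem 2.4.2]{MN}. So for this direction I would simply invoke these two observations.

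For the converse I would argue by contraposition: assuming that $X$ is $\sigma$-Dedekind complete but \emph{not} order continuous, I will exhibit one norm bounded sequence in $X_+$ that is $\sigma(X, X_n^\sim)$-null but not weakly null, which already shows that the stated property fails. The key input is a decreasing sequence $x_n \downarrow 0$ in $X_+$ with $\inf_n \|x_n\| > 0$. This is precisely the place where $\sigma$-Dedekind completeness is used: by the sequential characterization of order continuity (\cite[Theorem 2.4.2]{MN}), a $\sigma$-Dedekind complete lattice that is not order continuous admits such a sequence. Concretely one may instead take, via \cite[Theorem 5.14, p. 94]{Sc}, a normalized disjoint sequence $\{f_n\} \subseteq X_+$ whose order sum $e = (o)\sum_n f_n$ lies in $X$, and set $x_n = e - \sum_{k=1}^n f_k$; then $x_n \downarrow 0$ while $\|x_n\| \geq \|f_{n+1}\| = 1$. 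In either case $\{x_n\}$ is norm bounded (by $\|x_1\|$) and positive.

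It then remains to verify the two convergence statements. That $x_n \xrightarrow{\sigma(X, X_n^\sim)} 0$ is immediate, since $x_n \downarrow 0$ is order convergence to $0$ and every $g \in X_n^\sim$ is order continuous, so $\langle g, x_n\rangle \to 0$ for all $g \in X_n^\sim$. The substantive point is to show that $\{x_n\}$ is \emph{not} weakly null, and here I would combine Mazur's theorem with the monotonicity of the lattice norm. If $x_n \to 0$ weakly, then $0$ lies in the weak closure, and therefore (Mazur) in the norm closure, of $\text{co}\{x_n\}$. But any convex combination of the $x_n$ whose support has largest index $N$ dominates $x_N$, because $x_n \geq x_N$ for $n \leq N$; hence its norm is at least $\|x_N\| \geq \inf_k\|x_k\| > 0$. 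Thus no convex combination of the $x_n$ can be norm-small, contradicting $0 \in \overline{\text{co}\{x_n\}}^{\|\cdot\|}$. Consequently $\{x_n\}$ is the required counterexample, and the contrapositive is proved.

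I expect the one genuinely delicate step to be this last argument that the $\sigma(X, X_n^\sim)$-null sequence is not weakly null: a positive, order-null, $\sigma(X, X_n^\sim)$-null sequence could a priori still be weakly null, and excluding this depends essentially on exploiting both the monotone decrease of $\{x_n\}$ and the lattice-monotonicity of the norm through Mazur's theorem. The remaining ingredients — identifying $\sigma(X, X_n^\sim)$ with the weak topology when $X$ is order continuous, and producing the bad decreasing sequence from failure of order continuity — are routine appeals to standard theory.
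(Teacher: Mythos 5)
Your proof is correct, but it runs in the opposite logical direction from the paper's, so a comparison is worthwhile. The paper proves the converse \emph{directly}: assuming $\sigma$-Dedekind completeness and the stated property, it takes an arbitrary order bounded increasing sequence $\{f_n\}$ with supremum $f$, notes that $f-f_n\downarrow 0$ is a positive, norm bounded, $\sigma(X,X_n^\sim)$-null sequence, concludes from the hypothesis that $f_n\to f$ weakly, upgrades this to norm convergence by the Dini-type corollary after \cite[Theorem 5.9, p.~89]{Sc}, and then invokes \cite[Theorem 1.1]{Wn} to get order continuity. You instead prove the \emph{contrapositive}: from $\sigma$-Dedekind completeness plus failure of order continuity you manufacture a single witness $x_n\downarrow 0$ with $\inf_n\|x_n\|>0$, observe it is $\sigma(X,X_n^\sim)$-null by definition of $X_n^\sim$, and show it is not weakly null. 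The two arguments are mirror images: your sequence $x_n$ is exactly a witness to the failure of the monotone-sequence characterization of order continuity that the paper cites, and your Mazur-plus-monotonicity argument (a convex combination of a decreasing positive sequence dominates the term of largest index, so no convex combination can be norm-small) is precisely a self-contained proof, in the decreasing-to-zero case, of the Dini-type corollary the paper cites from \cite{Sc}. What your route buys is self-containedness and transparency about where $\sigma$-Dedekind completeness enters (namely, in producing the bad sequence, via \cite[Theorem 2.4.2]{MN} or the disjoint-sum construction from \cite[Theorem 5.14, p.~94]{Sc}); what the paper's route buys is brevity, since both of its key steps are disposed of by citation and no construction is needed. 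All steps of your argument check out: the reduction $x_n \geq f_{n+1}$ giving $\|x_n\|\geq 1$, the norm bound $\|x_n\|\leq\|x_1\|$, the $\sigma(X,X_n^\sim)$-nullity from $x_n\downarrow 0$, and the estimate $\bigl\|\sum_{n\in F}\lambda_n x_n\bigr\|\geq\|x_{\max F}\|\geq\inf_k\|x_k\|>0$ that blocks weak nullity via Mazur's theorem.
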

\begin{proof}
	If $ X$ is order continuous, then $X$ is $\sigma$-Dedekind complete and $\sigma( X, X_n^\sim)$ is the weak topology on $X$. It follows that for every norm bounded sequence $\{f_n\}$ in $ X_+$, $f_n\xrightarrow{\sigma( X, X_n^\sim)}0$ implies $\{f_n\}$ converges weakly to 0. Conversely, suppose that $X$ is $\sigma$-Dedekind complete and for every norm bounded sequence $\{f_n\}$ in $ X_+$, $f_n\xrightarrow{\sigma( X, X_n^\sim)}0$ implies $\{f_n\}$ converges weakly to 0. Let $\{f_n\}$ be an order bounded increasing sequence in $ X_+$ and $f$ be the supremum of $\{f_n\}$. Then $\{f-f_n\}$ is a norm bounded sequence in $ X_+$ and $f-f_n \xrightarrow{\sigma( X, X_n^\sim)}0$. By the hypothesis, $\{f_n\}$ converges weakly to $f$. Using the corollary after \cite[Theorem 5.9, p. 89]{Sc}, we obtain that $\{f_n\}$ norm converges to $f$. Thus, every order bounded increasing sequence in $ X_+$ norm converges. By \cite[Theorem 1.1]{Wn}, we conclude that $ X$ is order continuous.
\end{proof}

In case that $ X_n^\sim$ is order continuous, $DOCP$ implies the order continuity of the norm dual $X^*$ of $X$. In fact, we have the following property.

\begin{proposition}\label{dual-orderdual-B1}
	$X^*$ is order continuous if and only if $ X_n^\sim$ is order continuous and $ X$ has $DOCP$.
\end{proposition}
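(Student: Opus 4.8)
The plan is to route both implications through the classical fact that, for any Banach lattice, $X^*$ is order continuous if and only if every norm bounded disjoint sequence in $X_+$ converges weakly to $0$; call the latter property $(W)$. This characterization is the only substantial external ingredient. The second, elementary, ingredient is that once $X_n^\sim$ is order continuous the hypothesis built into $DOCP$ becomes vacuous, so that $DOCP$ reduces precisely to $(W)$.

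I would first record this reduction. Suppose $X_n^\sim$ is order continuous, so that $X_n^\sim=(X_n^\sim)_a=X_{uo}^\sim$, using the identity $X_{uo}^\sim=(X_n^\sim)_a$ recalled in Section 1. Let $\{f_n\}$ be a norm bounded disjoint sequence in $X_+$. Every disjoint sequence is uo-null (see \cite{GTX}), so $f_n\xrightarrow{uo}0$; being norm bounded, it then satisfies $\langle g,f_n\rangle\to 0$ for every $g\in X_{uo}^\sim$ by the very definition of the uo-dual. As $X_{uo}^\sim=X_n^\sim$, this means $f_n\xrightarrow{\sigma(X,X_n^\sim)}0$ automatically. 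Hence, when $X_n^\sim$ is order continuous, the implication defining $DOCP$ has its premise always satisfied, and $X$ has $DOCP$ if and only if $(W)$ holds.

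With this in hand the two directions are immediate. For the forward implication, assume $X^*$ is order continuous. Since $X_n^\sim$ is an ideal in the Dedekind complete lattice $X^*$, order continuity passes to $X_n^\sim$: for a net $\phi_\alpha\downarrow 0$ in $X_n^\sim$, the infimum $\psi=\inf_{X^*}\phi_\alpha\geq 0$ satisfies $0\leq\psi\leq\phi_{\alpha_0}\in X_n^\sim$, so $\psi\in X_n^\sim$ by the ideal property, whence $\psi=0$; thus $\phi_\alpha\downarrow 0$ in $X^*$ and $\|\phi_\alpha\|\to 0$ by order continuity of $X^*$. Therefore $X_n^\sim$ is order continuous, and $(W)$ holds by the classical characterization; by the reduction above, $X$ has $DOCP$. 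For the converse, assume $X_n^\sim$ is order continuous and $X$ has $DOCP$. The reduction gives $(W)$, and the classical characterization then yields that $X^*$ is order continuous.

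The main obstacle is the classical characterization $(W)\Leftrightarrow X^*$ order continuous, which I would cite as well known (cf. \cite{MN,Wn}). If a self-contained argument were required, the delicate half is $(W)\Rightarrow X^*$ order continuous: assuming $X^*$ is not order continuous, one applies \cite[Theorem 2.4.2]{MN} to the Dedekind complete lattice $X^*$ to produce an order bounded disjoint sequence in $(X^*)_+$ that is bounded away from $0$ in norm, and then, by a disjointification argument, extracts from it a norm bounded disjoint sequence in $X_+$ which is not weakly null, contradicting $(W)$. All remaining steps---the uo-nullity of disjoint sequences, the identity $X_n^\sim=X_{uo}^\sim$ under order continuity, and the heredity of order continuity to ideals---are routine or already available in the excerpt.
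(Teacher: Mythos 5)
Your proof is correct and follows essentially the same route as the paper's: both directions hinge on Wnuk's characterization ($X^*$ is order continuous iff every norm bounded disjoint sequence in $X_+$ is weakly null), combined with the facts that disjoint sequences are uo-null, that $X_{uo}^\sim=X_n^\sim$ when $X_n^\sim$ is order continuous, and that order continuity passes from $X^*$ to its ideal $X_n^\sim$. Your packaging of these facts as a single ``reduction'' (under order continuity of $X_n^\sim$, the premise of $DOCP$ is vacuous, so $DOCP$ collapses to the weak-nullity property) is just a reorganization of the paper's argument, not a different proof.
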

\begin{proof}
	Suppose that $ X^*$ is order continuous. Clearly, $ X_n^\sim$ is order continuous. Recall that $ X^*$ is order continuous if and only if every norm bounded disjoint sequence in $ X_+$ converges weakly to 0 (\cite[Theorem 3.1]{Wn}). It follows that $ X$ has $DOCP$. Conversely, suppose that $ X_n^\sim$ is order continuous and $ X$ has $DOCP$. Let $\{f_n\}$ be a norm bounded disjoint sequence in $X_+$. By \cite[Corollary 3.6]{GTX}, $f_n\xrightarrow{uo}0$ and hence, $\{f_n\}$ $\sigma(X,X_{uo}^\sim)$-converges to 0. Since $X^\sim_n$ is order continuous, $X_{uo}^\sim=X_n^\sim$. Then $\{f_n\}$ $\sigma(X,X_{n}^\sim)$-converges to 0 and by $DOCP$, $\{f_n\}$ converges weakly to 0. Thus, $ X^*$ is order continuous.
\end{proof}

The next theorem shows that any Banach lattice with property $P4$ has $DOCP$.

\begin{theorem}\label{necessaryP4}
	If $X$ has property $P4$, then $X$ has $DOCP$. In particular, any Banach lattice with property $P1$ has $DOCP$.
\end{theorem}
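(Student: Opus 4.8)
The plan is to prove the contrapositive of the first assertion: if $X$ fails $DOCP$, then $X$ fails $P4$. The second assertion then follows immediately, since $P1$ trivially implies $P2$ (norm bounded order closed convex sets are a subclass of all order closed convex sets), and $P2$ implies $P4$ by the inclusion $\overline{E}^{|\sigma|_s(X,X_n^\sim)}\subseteq\overline{E}^{\sigma(X,X_n^\sim)}$ recorded at the beginning of this section; hence $P1\Rightarrow P4\Rightarrow DOCP$.

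So suppose $X$ does not have $DOCP$. Then there is a norm bounded disjoint sequence $\{f_n\}$ in $X_+$ with $f_n\xrightarrow{\sigma(X,X_n^\sim)}0$ which does not converge weakly to $0$. By Lemma \ref{weak-l1}, some subsequence $\{f_{n_k}\}$ is isomorphic to the $\ell^1$ basis, and of course $f_{n_k}\xrightarrow{\sigma(X,X_n^\sim)}0$ as well. This subsequence is precisely the kind of sequence for which Corollary \ref{0notinconvexfn} and Proposition \ref{disjointseq-o-sigma-seq} were designed, and the contradiction will come from playing the two against each other.

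On the order side, Corollary \ref{0notinconvexfn} gives $0\notin\overline{\text{co}(\{f_{n_k}\})}^o$. On the topological side, because each $f_{n_k}\geq 0$ we have $|f_{n_k}-0|=f_{n_k}\xrightarrow{\sigma(X,X_n^\sim)}0$, that is, $f_{n_k}\xrightarrow{|\sigma|(X,X_n^\sim)}0$; since each $f_{n_k}$ is a (trivial) convex combination of the $\{f_{n_k}\}$, this witnesses $0\in\overline{\text{co}(\{f_{n_k}\})}^{|\sigma|_s(X,X_n^\sim)}$. If $X$ had $P4$, then Proposition \ref{disjointseq-o-sigma-seq} would force $\overline{\text{co}(\{f_{n_k}\})}^o=\overline{\text{co}(\{f_{n_k}\})}^{|\sigma|_s(X,X_n^\sim)}$, so that $0$ would lie in both closures, contradicting the order side. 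Hence $X$ cannot have $P4$, completing the contrapositive.

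I do not expect a genuine analytic obstacle, since the real content has already been isolated in the preceding results. The one point to get right is the logical direction: recognizing that the disjoint sequence itself, thanks to positivity, already places $0$ in the $|\sigma|_s$-closure, while its $\ell^1$-structure keeps $0$ out of the order closure, so the equality of the two closures guaranteed by $P4$ becomes untenable.
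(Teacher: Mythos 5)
Your proof is correct and follows essentially the same route as the paper's: both pit Corollary \ref{0notinconvexfn} against Proposition \ref{disjointseq-o-sigma-seq} on a subsequence isomorphic to the $\ell^1$ basis obtained via Lemma \ref{weak-l1}, with yours phrased as a contrapositive and the paper's as a direct argument with an embedded contradiction. Your explicit remark that positivity turns $\sigma(X,X_n^\sim)$-convergence into $|\sigma|(X,X_n^\sim)$-convergence, and your spelling out of $P1\Rightarrow P2\Rightarrow P4$, are details the paper leaves implicit but are exactly right.
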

\begin{proof}
	Let $\{f_n\}$ be a norm bounded disjoint sequence in $ X_+$ such that $f_n\xrightarrow{\sigma( X, X_n^\sim)}0$. Suppose that there is a subsequence $\{f_{n_k}\}$ of $\{f_n\}$ which is isomorphic to $\ell^1$ basis. Since $f_{n_k}\xrightarrow{\sigma( X, X_n^\sim)}0$,  we have that $0\in \overline{\text{co}(\{f_{n_k}\})}^{|\sigma|_s( X, X_n^\sim)}$. On the other hand, by Proposition \ref{disjointseq-o-sigma-seq} and Corollary \ref {0notinconvexfn}, $$0\notin \overline{\text{co}(\{f_{n_k}\})}^o=\overline{\text{co}(\{f_{n_k}\})}^{|\sigma|_s( X, X_n^\sim)}, $$ a contradiction. Thus, no subsequence of $\{f_n\}$ is isomorphic to $\ell^1$ basis and by Lemma \ref{weak-l1}, we conclude that $f_n\xrightarrow{\sigma( X, X^*)}0$.
\end{proof}

Lemma \ref{weak-l1} also gives another condition for $X$ to have $DOCP$.

\begin{lemma}\label{Xa-sigma-weak}
	If $\{f_n\}$ is a sequence in ${X}_a$ which  $\sigma({X},{X}_n^\sim)$-converges to 0, then $\{f_n\}$ converges weakly to 0.
\end{lemma}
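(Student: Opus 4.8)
The plan is to reduce the statement to a surjectivity fact about duals. Since $\sigma(X,X_n^\sim)$-convergence only tests against $X_n^\sim$, while weak convergence tests against all of $X^*$, it suffices to prove that the restriction map $r\colon X_n^\sim\to (X_a)^*$, $h\mapsto h|_{X_a}$, is onto. Granting this, take an arbitrary $g\in X^*$ and set $\phi:=g|_{X_a}$. Because $X_a$ is a norm-closed ideal of $X$ it is an order continuous Banach lattice (any $g_\alpha\downarrow 0$ in $X_a$ is eventually dominated by a fixed element of $X_a$, so $\|g_\alpha\|\to 0$), hence $(X_a)^*=(X_a)_n^\sim$ by \cite[Theorem 2.4.2]{MN} and $\phi\in(X_a)^*$. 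By surjectivity there is $h\in X_n^\sim$ with $h|_{X_a}=\phi$, and then for each $n$, since $f_n\in X_a$, $g(f_n)=\phi(f_n)=h(f_n)\to 0$ by $\sigma(X,X_n^\sim)$-convergence. As $g$ was arbitrary, $\{f_n\}$ is weakly null.

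To prove surjectivity, split $\phi=\phi^+-\phi^-$ in the Banach lattice $(X_a)^*=(X_a)_n^\sim$; since this is a band in $(X_a)^\sim$, both parts are positive and order continuous, so it is enough to extend one positive $\psi\in\big((X_a)_n^\sim\big)_+$ to an element of $X_n^\sim$. I would use the Kantorovich-type formula $\widetilde\psi(x):=\sup\{\psi(y):y\in (X_a)_+,\ 0\le y\le x\}$ for $x\in X_+$. Finiteness ($\widetilde\psi(x)\le\|\psi\|\,\|x\|$), positive homogeneity, and additivity on $X_+$ are routine: additivity uses the Riesz decomposition property together with the fact that $X_a$ is an ideal, so any $y\in X_a$ with $0\le y\le x_1+x_2$ splits as $y=y_1+y_2$ with $0\le y_i\le x_i$ and automatically $y_i\in X_a$. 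Thus $\widetilde\psi$ extends to a positive linear functional on $X$ with $\widetilde\psi|_{X_a}=\psi$.

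The main obstacle is to show that $\widetilde\psi$ is order continuous, i.e.\ $\widetilde\psi\in X_n^\sim$. Suppose $x_\alpha\downarrow 0$ in $X$ but $c:=\inf_\alpha\widetilde\psi(x_\alpha)>0$. The key observation is that for any fixed $w\in (X_a)_+$ the net $w\wedge x_\alpha$ decreases to $w\wedge 0=0$ in $X_a$, so order continuity of $\psi$ on $X_a$ yields $\psi(w\wedge x_\alpha)\to 0$. Using this I would build inductively indices $\alpha_0<\alpha_1<\cdots$ and elements $y_k\in (X_a)_+$ with $y_k\le x_{\alpha_k}\le x_{\alpha_0}$ and $\psi(y_k)>c/2$ (possible since $\widetilde\psi(x_{\alpha_k})\ge c$), where at stage $k$ one first picks $\alpha_k>\alpha_{k-1}$ so that $\psi(w_{k-1}\wedge x_{\alpha_k})<c\,2^{-k-3}$ with $w_{k-1}:=\bigvee_{j<k}y_j\in (X_a)_+$; then $\psi(y_k\wedge w_{k-1})\le\psi(w_{k-1}\wedge x_{\alpha_k})<c\,2^{-k-3}$. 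Writing $w_N=\bigvee_{k=0}^N y_k\le x_{\alpha_0}$ and using the identity $\psi(a\vee b)=\psi(a)+\psi(b)-\psi(a\wedge b)$, one gets $\psi(w_N)>\psi(w_{N-1})+c/2-c\,2^{-N-3}$, hence $\psi(w_N)>(N+1)c/2-c/8$, which is unbounded. But $w_N\le x_{\alpha_0}$ and $\widetilde\psi$ is monotone, so $\psi(w_N)=\widetilde\psi(w_N)\le\widetilde\psi(x_{\alpha_0})<\infty$, a contradiction. Therefore $c=0$, $\widetilde\psi$ is order continuous, and the surjectivity of $r$ follows. I expect this disjointification-free inductive construction and the telescoping estimate to be the delicate part; the remaining steps are bookkeeping with the definition of $X_a$ and standard facts about order continuous Banach lattices.
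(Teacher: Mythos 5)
Your proof is correct, and its skeleton is the same as the paper's: restrict an arbitrary $g\in X^*$ to $X_a$, use order continuity of $X_a$ to see that $g|_{X_a}\in (X_a)^*=(X_a)_n^\sim$, extend it to an order continuous functional on all of $X$, and then conclude $g(f_n)\to 0$ from $\sigma(X,X_n^\sim)$-convergence. The difference lies entirely in how the extension is produced: the paper simply cites the Schaefer--Zhang extension theorem (\cite[Corollary 1.2]{SZ}), which asserts that every order continuous functional on an ideal of $X$ has a norm-preserving order continuous extension to $X$, whereas you reprove the relevant special case from scratch --- splitting $\phi=\phi^+-\phi^-$ inside the band $(X_a)_n^\sim$ and extending a positive $\psi$ by the Kantorovich formula $\widetilde\psi(x)=\sup\{\psi(y):y\in(X_a)_+,\ 0\le y\le x\}$, with additivity from the Riesz decomposition property and the ideal property of $X_a$. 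Your verification that $\widetilde\psi$ is order continuous is the genuinely new content relative to the paper, and it is sound: the inductive choice of $y_k\le x_{\alpha_k}$ with $\psi(y_k)>c/2$ and small overlaps $\psi(y_k\wedge w_{k-1})<c\,2^{-k-3}$, combined with the identity $\psi(a\vee b)=\psi(a)+\psi(b)-\psi(a\wedge b)$, gives the unbounded telescoping lower bound $\psi(w_N)>(N+1)c/2-c/8$, contradicting $w_N\le x_{\alpha_0}$; this implicitly (and legitimately) uses the standard fact that a positive functional is order continuous as soon as $x_\alpha\downarrow 0$ forces $\inf_\alpha\psi(x_\alpha)=0$, together with the observation that $w\wedge x_\alpha\downarrow 0$ in $X_a$ for fixed $w\in(X_a)_+$. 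What the citation buys the paper is brevity; what your route buys is a self-contained argument whose only inputs are elementary vector lattice identities --- in effect you have reproved the positive-functional case of \cite[Corollary 1.2]{SZ}, which is all the lemma needs.
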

\begin{proof}
	Let $g\in X^*$ and $g_a$ be the restriction of $g$ on $X_a$. Then $g_a\in (X_a)^*$. Since $X_a$ is order continuous, $(X_a)^*=(X_a)_n^\sim$. Hence, $g_a$ is a norm bounded order continuous functional on ${X}_a$. Since $X_a$ is an ideal in $X$, \cite[Corollary 1.2]{SZ} implies that $g_a$ has a norm preserving order continuous extension $\tilde{g}$ on $X$. Since $\tilde{g}\in X_n^\sim$,  $\{f_n\}\subseteq X_a$ and $f_n\xrightarrow{\sigma( X, X_n^\sim)}0$, we obtain that $\langle g,f_n\rangle=\langle g_a,f_n\rangle=\langle\tilde{g},f_n\rangle\to0$. Thus, we conclude that $\{f_n\}$ converges weakly to 0.
\end{proof}

\begin{proposition}\label{AM-B1}
	Suppose that $( X\slash X_a)^*$ is order continuous. Then $ X$ has $DOCP$.
\end{proposition}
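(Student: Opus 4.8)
The plan is to reduce property $DOCP$ to a weak-convergence statement and to route it through the quotient lattice $X/X_a$. Fix a norm bounded disjoint sequence $\{f_n\}$ in $X_+$ with $f_n\xrightarrow{\sigma(X,X_n^\sim)}0$; I must show $\langle g,f_n\rangle\to0$ for every $g\in X^*$. Since $X_a$ is a closed ideal, the quotient map $q\colon X\to X/X_a$ is a lattice homomorphism, so $\{q(f_n)\}$ is a norm bounded disjoint sequence in $(X/X_a)_+$. As $(X/X_a)^*$ is order continuous, \cite[Theorem 3.1]{Wn} (the characterization also used in Proposition \ref{dual-orderdual-B1}) gives $q(f_n)\to0$ weakly in $X/X_a$. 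Identifying $(X/X_a)^*$ with the annihilator $X_a^\perp$, this says precisely that $\langle g,f_n\rangle\to0$ for every $g\in X_a^\perp$.

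Next I would split an arbitrary $g\in X^*$ along the band decomposition of the dual. Recall that $X^*$ is a Dedekind complete Banach lattice and that $X_n^\sim$ is in fact a band in $X^*$, hence a projection band, so $X^*=X_n^\sim\oplus X_s^\sim$, where $X_s^\sim:=(X_n^\sim)^d$ is the band of singular functionals. Writing $g=g_n+g_s$ accordingly, the hypothesis $f_n\xrightarrow{\sigma(X,X_n^\sim)}0$ handles the first summand, $\langle g_n,f_n\rangle\to0$. For the second summand it suffices, by the previous paragraph, to prove the inclusion $X_s^\sim\subseteq X_a^\perp$; that is, every singular functional vanishes on $X_a$. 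This is the one genuinely nontrivial point, and the rest of the argument is routine once it is in hand.

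To establish $X_s^\sim\subseteq X_a^\perp$ it is enough, by decomposing into positive and negative parts inside the sublattice $X_s^\sim$, to treat $0\le g\in X_s^\sim$ and show $g(f)=0$ for every $0\le f\in X_a$. Since $X_a$ is an order continuous closed ideal, the restriction $g|_{X_a}$ is an order continuous functional on $X_a$; exactly as in Lemma \ref{Xa-sigma-weak}, \cite[Corollary 1.2]{SZ} extends it to some $\tilde g\in X_n^\sim$ with $\tilde g|_{X_a}=g|_{X_a}$. Because $X_a$ is an ideal, restriction to $X_a$ commutes with taking moduli, so $|\tilde g|\in(X_n^\sim)_+$ still agrees with $g$ on $(X_a)_+$, while $g\wedge|\tilde g|=0$ since $g\in(X_n^\sim)^d$. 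I would then feed $f$ into the Riesz--Kantorovich formula for the infimum of two positive functionals:
\[
(g\wedge|\tilde g|)(f)=\inf\{g(u)+|\tilde g|(v):u,v\ge0,\ u+v=f\}.
\]
Every admissible pair $u,v$ lies in the ideal $X_a$, where $g$ and $|\tilde g|$ coincide, so each term equals $g(u)+g(v)=g(f)$; hence $g(f)=(g\wedge|\tilde g|)(f)=0$. This yields $X_s^\sim\subseteq X_a^\perp$ and closes the proof. The main obstacle is precisely this last step: producing an order continuous functional that agrees with the singular functional $g$ on $X_a$, and then converting the disjointness $g\wedge|\tilde g|=0$ into the vanishing of $g$ on $X_a$ via the infimum formula.
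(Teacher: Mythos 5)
Your proof is correct, but it takes a genuinely different route from the paper's. The paper never decomposes the dual: it reduces $DOCP$, via Lemma \ref{weak-l1} and Corollary \ref{0notinconvexfn}, to showing that a disjoint positive $\sigma(X,X_n^\sim)$-null sequence has no subsequence equivalent to the $\ell^1$ basis, and then argues entirely inside $X$. Concretely, from the weak null-ness of $\{q(f_n)\}$ in $X/X_a$ (the same application of \cite[Theorem 3.1]{Wn} you make) it uses Mazur's theorem to extract disjoint convex blocks $g_n\in\text{co}(\{f_n\})$ with $\|q(g_n)\|\to 0$, approximates these by elements $h_n\in X_a$ with $0\le h_n\le g_n$ and $\|g_n-h_n\|_X\to 0$, applies Lemma \ref{Xa-sigma-weak} to $\{h_n\}$, and concludes that $0\in\overline{\text{co}(\{f_n\})}^{o}$, contradicting Corollary \ref{0notinconvexfn}. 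You instead prove weak null-ness of $\{f_n\}$ directly, by splitting an arbitrary $g\in X^*$ along the projection band decomposition $X^*=X_n^\sim\oplus(X_n^\sim)^d$ and showing that every singular functional vanishes on $X_a$, hence factors through the quotient $X/X_a$; the hypothesis $f_n\xrightarrow{\sigma(X,X_n^\sim)}0$ handles the order continuous part. Your key inclusion $(X_n^\sim)^d\subseteq X_a^\perp$ is sound: the restriction-of-modulus observation and the Riesz--Kantorovich computation are both correct, and you invoke the same Schaefer--Zhang extension \cite[Corollary 1.2]{SZ} that powers Lemma \ref{Xa-sigma-weak}. What your route buys: it dispenses with Mazur's theorem, convex blocking, and the $\ell^1$ apparatus (Proposition \ref{cvxnormbddseq-oclosed}, Corollary \ref{0notinconvexfn}, Lemma \ref{weak-l1}), and it isolates a clean reusable fact, $X^*=X_n^\sim+X_a^\perp$, which makes $DOCP$ transparent whenever $(X/X_a)^*$ is order continuous. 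What the paper's route buys: it stays within machinery already developed in Section 2 for the other results, needs no structure theory of singular functionals, and keeps the argument lattice-internal, which is in the spirit of how $P4$, $P5$ and $OSSP$ are handled elsewhere in the paper.
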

\begin{proof}
	By Lemma \ref{weak-l1}, it is enough to show that for every norm bounded disjoint sequence $\{f_n\}$ in $ X_+$, $f_n\xrightarrow{\sigma( X, X_n^\sim)}0$ implies that $\{f_n\}$ is not isomorphic to $\ell^1$ basis. Let $\{f_n\}$ be a norm bounded disjoint sequence in $ X_+$ such that $f_n\xrightarrow{\sigma( X, X_n^\sim)}0$. Denote by $q$ the canonical map from $ X$ onto $ X\slash X_a$. Since $( X\slash X_a)^*$ is order continuous and $\{q(f_n)\}$ is a norm bounded disjoint positive sequence in $X\slash X_a$, by \cite[Theorem 3.1]{Wn}, $\{q(f_n)\}$ converges weakly to 0. It follows that
	\[
	0\in \overline{\text{co}(\{q(f_n):n\geq k\})}^{\sigma( X\slash X_a,\left( X\slash X_a\right)^*)}=\overline{\text{co}(\{q(f_n):n\geq k\})}^{\|\cdot\|_{ X\slash X_a}}
	\]
	for every $k$. Choose $k_1\geq 1$ and $g_1\in \text{co}(\{f_n:1\leq n< k_1\})$ such that $\|q(g_1)\|\leq \frac{1}{2^1}$. There exist $k_2> k_1$ and $g_2\in \text{co}(\{f_n:k_1\leq n< k_2\})$ such that $\|q(g_2)\|\leq \frac{1}{2^2}$. Continuing this process, we can find a disjoint sequence $\{g_n\}$ in $\text{co}(\{f_n\})$ such that $g_n\xrightarrow{\sigma( X, X_n^\sim)}0$ and $\{q(g_n)\}$ norm converges to 0. For every $n$, there exists $h_n\in X_a$ such that $0\leq h_n\leq g_n$, $\|g_n-h_n\|_{ X}<\|q(g_n)\|_{ X\slash X_a}+\frac{1}{2^n}$. It follows that $\{g_n-h_n\}$ norm converges to 0 and hence, converges weakly to 0 . Since $\{h_n\}\subseteq  X_a$ and $h_n\xrightarrow{\sigma( X, X_n^\sim)}0$, by Lemma \ref{Xa-sigma-weak}, $\{h_n\}$ converges weakly to 0. Then $\{g_n\}$ also converges weakly to 0 and hence,
	\[
	0\in \overline{\text{co}(\{g_n\})}^{\sigma( X, X^*)}\subseteq \overline{\text{co}(\{f_n\})}^{\sigma( X, X^*)}=\overline{\text{co}(\{f_n\})}^{\|\cdot\|_{ X}}\subseteq \overline{\text{co}(\{f_n\})}^{o}.
	\]
	From Corollary \ref{0notinconvexfn}, we conclude that $\{f_n\}$ is not isomorphic to $\ell^1$ basis. 
\end{proof}

Recall that if $ X$ is an Orlicz space, then $X\slash X_a$ is an AM-space and hence, $( X\slash X_a)^*$ is order continuous. By Proposition \ref{AM-B1}, we obtain that any Orlicz space has $DOCP$. 

A Banach lattice $X$ is said to have the subsequence splitting property if every norm bounded sequence in $X$ has a subsequence that splits into an $X$-equi-integrable sequence and a disjoint sequence (\cite{We}). When $X$ is an Orlicz space and $X_n^\sim$ is order continuous, Delbaen and Owari \cite{DO} showed that $X$ has property $P1$ by using the subsequence splitting property. However, even for Orlicz spaces, analyzing property $P1$ in the situation when $X_n^\sim$ is not order continuous requires a different kind of splitting, which we formalize in the next definition.

\begin{definition}
	A Banach lattice $ X$ is said to have the order subsequence splitting property ($OSSP$) if for every norm bounded sequence $\{f_n\}$ in $ X_+$ which uo-converges to 0, there exists a subsequence $\{f_{n_k}\}$ of $\{f_n\}$ satisfying
	\[
	f_{n_k}=x_{k}+y_{k}+z_{k},
	\]
	where $x_{k},y_{k},z_{k}\geq0$, $x_{k}\in X_a$ for all $k$, $\{y_k\}$ is a disjoint sequence and $\{z_{k}\}$ is order bounded.
\end{definition}

Clearly, any order continuous Banach lattice has $OSSP$. Any Orlicz space also has $OSSP$ (see Section 3.1).

\begin{proposition}\label{cbsp-prop}
	Suppose that $X$ has $OSSP$ and $X_n^\sim$ contains a strictly positive element. Then $DOCP, P4$ and $P5$ all are equivalent.
\end{proposition}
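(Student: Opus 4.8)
The plan is to close the cycle of implications. The relations already recorded give $P5\Rightarrow P4$ (immediate from the definitions) and $P4\Rightarrow DOCP$ (Theorem \ref{necessaryP4}), so it remains only to prove $DOCP\Rightarrow P5$. Since one always has $\overline{C}^o\subseteq\overline{C}^{|\sigma|_s(X,X_n^\sim)}$, the task is, for every norm bounded convex $C$ and every $f\in\overline{C}^{|\sigma|_s(X,X_n^\sim)}$, to produce convex combinations of points of $C$ order converging to $f$. First I would fix a sequence $\{f_n\}\subseteq C$ with $h_n:=|f_n-f|\xrightarrow{\sigma(X,X_n^\sim)}0$; this sequence is norm bounded and $h_n\ge 0$.

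The role of the strictly positive functional is to upgrade $\sigma(X,X_n^\sim)$-nullity to uo-nullity along a subsequence. Let $\phi\in(X_n^\sim)_+$ be strictly positive; then $\langle\phi,h_n\rangle\to0$, and I would pass to a subsequence (not relabeled) with $\sum_k\langle\phi,h_k\rangle<\infty$. For any $v\in X_+$ the set $\{h_k\wedge v:k\ge m\}$ is order bounded by $v$, so the supremum $u_m^{(v)}:=\sup_{k\ge m}(h_k\wedge v)$ exists (in the Dedekind completion if $X$ is not $\sigma$-Dedekind complete, which affects neither uo- nor order-convergence of the sequences involved), decreases to some $u^{(v)}\ge 0$, and satisfies $\langle\phi,u_m^{(v)}\rangle\le\sum_{k\ge m}\langle\phi,h_k\rangle\to0$. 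Order continuity of $\phi$ forces $\langle\phi,u^{(v)}\rangle=0$, and strict positivity then gives $u^{(v)}=0$; hence $h_k\wedge v\le u_k^{(v)}\downarrow 0$ for every $v$, i.e. $h_k\xrightarrow{uo}0$. The single subsequence works for all $v$ at once because the bound on $\langle\phi,u_m^{(v)}\rangle$ is independent of $v$. Applying $OSSP$ to this uo-null norm bounded sequence and relabeling, I obtain $h_n=x_n+y_n+z_n$ with $x_n,y_n,z_n\ge 0$, $x_n\in X_a$, $\{y_n\}$ disjoint, and $\{z_n\}$ order bounded.

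Because $0\le x_n,y_n,z_n\le h_n$ and $h_n\xrightarrow{\sigma(X,X_n^\sim)}0$, testing against positive functionals shows that each of $\{x_n\},\{y_n\},\{z_n\}$ is $\sigma(X,X_n^\sim)$-null. Then Lemma \ref{Xa-sigma-weak} gives $x_n\to0$ weakly; $DOCP$ together with Lemma \ref{weak-l1} gives $y_n\to0$ weakly; and the same $\sup$-argument as above, now applied to the order bounded sequence $\{z_n\}$ (which also has $\sum_n\langle\phi,z_n\rangle<\infty$ since $z_n\le h_n$), yields a decreasing $w_n\downarrow 0$ with $z_n\le w_n$, so $z_n\xrightarrow{o}0$. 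Thus $x_n+y_n\to0$ weakly while $z_n\xrightarrow{o}0$.

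It remains to assemble a single sequence of convex combinations controlling both parts. Since $x_n+y_n\to 0$ weakly, for each $m$ Mazur's theorem puts $0$ in the norm closure of $\text{co}(\{x_n+y_n:n\ge N_m\})$; choosing $N_m\uparrow\infty$ I select convex combinations, with coefficients $\mu^{(m)}$ supported on indices $\ge N_m$, whose $(x+y)$-part $u_m$ satisfies $\|u_m\|_X\le 1/m$. Applying the same coefficients to $\{z_n\}$ gives $\zeta_m\le\sum_{n\ge N_m}\mu^{(m)}_n w_n\le w_{N_m}\downarrow 0$, so $\zeta_m\xrightarrow{o}0$. The corresponding convex combinations of $\{h_n\}$ are $H_m=u_m+\zeta_m$; passing $\{u_m\}$ to an order null subsequence (a norm null sequence has one, by \cite[Lemma 3.11]{GX1}) yields $H_{m_j}\xrightarrow{o}0$. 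Finally, with $g_j:=\sum_n\mu^{(m_j)}_n f_n\in C$ we have $|g_j-f|\le\sum_n\mu^{(m_j)}_n h_n=H_{m_j}\xrightarrow{o}0$, so $g_j\xrightarrow{o}f$ and $f\in\overline{C}^o$. The main obstacle, and the place where both standing hypotheses are genuinely used, is the extraction step: securing one subsequence along which $h_n$ is uo-null (and its order bounded part order-null) for all test elements $v$ simultaneously, which is exactly what the strictly positive order continuous functional buys through the $\sup_{k\ge m}$ construction; $OSSP$ then isolates the disjoint piece on which $DOCP$ applies, and the delicate point in the assembly is using one family of coefficients so that the weakly null part (Mazur plus norm-to-order passage) and the order bounded part (the decreasing dominating sequence) are tamed together.
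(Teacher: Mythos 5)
Your proof is correct and takes essentially the same route as the paper's: reduce to $DOCP\Rightarrow P5$, use the strictly positive order continuous functional to upgrade $\sigma(X,X_n^\sim)$-nullity of $|f_n-f|$ to uo-nullity along a subsequence (the paper simply cites the proof of \cite[Theorem 4.1]{GLX1} for this step, which is exactly your $\sup_{k\ge m}$ construction), apply $OSSP$, dispose of the three pieces via Lemma \ref{Xa-sigma-weak}, $DOCP$, and order boundedness, and assemble with Mazur's theorem. The only differences are bookkeeping: the paper chooses convex blocks with norms at most $2^{-m}$ so the combinations are order null outright, whereas you take tails and pass to a further subsequence via \cite[Lemma 3.11]{GX1} --- both implementations are standard and sound.
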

\begin{proof}
	It is enough to show that $DOCP$ implies $P5$. Suppose that $ X$ has $DOCP$. Let $C$ be a norm bounded convex set in $ X$. We will show that $\overline{C}^{o}=\overline{C}^{|\sigma|_s( X, X_n^\sim)}$.
	
	Clearly, $\overline{C}^{o}\subseteq\overline{C}^{|\sigma|_s( X, X_n^\sim)}$. Now, let $f\in \overline{C}^{|\sigma|_s( X, X_n^\sim)}$. Then there exists $\{f_n\}\subseteq C$ such that $|f_n-f|\xrightarrow{\sigma( X, X_n^\sim)}0$. Let $g$ be a strictly positive element in $X_n^\sim$. Then $\langle g,|f_n-f|\rangle\to0$. After passing to a subsequence, we may assume that $\langle g,|f_n-f|\rangle<\frac{1}{2^n}$ for every $n$. Using the same steps as in the proof of \cite[Theorem 4.1]{GLX1}, we obtain that $|f_n-f|\xrightarrow{uo}0$. Hence, by $OSSP$, there exists a subsequence $\{f_{n_k}\}$ of $\{f_n\}$  satisfying 
	\[
	|f_{n_k}-f|=x_{k}+y_{k}+z_{k}
	\]
	where $x_{k},y_{k},z_{k}\geq0$, $x_{k}\in X_a$ for all $k$, $\{y_{k}\}$ is a disjoint sequence and $\{z_{k}\}$ is order bounded. Note that $x_k\xrightarrow{\sigma( X, X_n^\sim)}0$, $y_k\xrightarrow{\sigma( X, X_n^\sim)}0$ and $z_k\xrightarrow{o}0$. Since $\{x_k\}\subseteq  X_a$ and  $x_k\xrightarrow{\sigma( X, X_n^\sim)}0$, $\{x_k\}$ converges weakly to 0 by Lemma \ref{Xa-sigma-weak}. By $DOCP$, $\{y_k\}$ also converges weakly to 0. Hence, $\{x_k+y_k\}$ converges weakly to 0. Since
	\[
	0\in \overline{\text{co}(\{x_k+y_k:k\geq m\})}^{\sigma( X, X^*)}=\overline{\text{co}(\{x_k+y_k:k\geq m\})}^{\|\cdot\|_{ X}}
	\]
	for every $m$, we can find a strictly increasing sequence $\{p_m\}$ and $\{a_{mk}\geq 0:k=p_{m-1}+1,\dotsc,p_m, m\in\mathbb{N}\}$ $(p_0:=0)$ such that $\sum_{k=p_{m-1}+1}^{p_m}a_{mk}=1$ and $\left\|\sum_{k=p_{m-1}+1}^{p_m}a_{mk}(x_k+y_k)\right\|\leq\frac{1}{2^m}$ for every $m$. Note that $\sum_{k=p_{m-1}+1}^{p_m}a_{mk}(x_k+y_k)\xrightarrow{o}0$ as $m\to\infty$. Together with the fact that $\{z_k\}$ order converges to 0, we deduce that
	\[
	\sum_{k=p_{m-1}+1}^{p_m}a_{mk}|f_{n_k}-f|\xrightarrow{o}0
	\]
	as $m\to\infty$ and hence,
	\[
	\sum_{k=p_{m-1}+1}^{p_m}a_{mk}f_{n_k}\xrightarrow{o}f
	\]
	as $m\to\infty$. Since $\sum_{k=p_{m-1}+1}^{p_m}a_{mk}f_{n_k}\in C$ for every $m$, we conclude that $f\in \overline{C}^{o}$. 
\end{proof}

 From Proposition \ref{cbsp-prop} and Proposition \ref{order-dual-oc-prop}, we obtain the following corollary.

\begin{corollary}\label{ocd-P2-P5-DOCP-equiv}
	Suppose that $ X_n^\sim$ is order continuous and contains a strictly positive element. If $ X$ has $OSSP$, then $P2,P3,P4,P5$ and $DOCP$ all are equivalent.
\end{corollary}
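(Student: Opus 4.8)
The plan is to recognize that this corollary follows by simply combining the two preceding propositions, since the stated hypotheses simultaneously meet the hypotheses of both Proposition \ref{cbsp-prop} and Proposition \ref{order-dual-oc-prop}. Indeed, the assumption that $X_n^\sim$ is order continuous and contains a strictly positive element is precisely the hypothesis of Proposition \ref{order-dual-oc-prop}, while the assumptions that $X$ has $OSSP$ and that $X_n^\sim$ contains a strictly positive element are precisely the hypotheses of Proposition \ref{cbsp-prop}. Thus both propositions are available without any further verification.

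First I would apply Proposition \ref{cbsp-prop} to obtain the chain of equivalences $DOCP \Leftrightarrow P4 \Leftrightarrow P5$. Next I would apply Proposition \ref{order-dual-oc-prop}, whose conclusion furnishes $P2 \Leftrightarrow P4$ together with $P3 \Leftrightarrow P5$. Concatenating these two families of equivalences yields
\[
P2\Leftrightarrow P4\Leftrightarrow DOCP\Leftrightarrow P5\Leftrightarrow P3,
\]
so that all five properties coincide, which is exactly the assertion.

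There is no genuine obstacle here: the substantive content resides entirely in the two earlier propositions, and the only thing to confirm is that their hypotheses line up, which they do verbatim. If anything, the single point deserving a moment of care is that the strictly positive functional assumption is being used on both sides --- it drives the extraction of a uo-null subsequence inside Proposition \ref{cbsp-prop} and the identification of the $|\sigma|(X,X_n^\sim)$-sequential closure with the $\sigma(X,X_n^\sim)$-closure inside Proposition \ref{order-dual-oc-prop} --- so that the combined hypotheses are indeed sufficient and nothing extra is tacitly required.
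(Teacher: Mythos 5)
Your proof is correct and is exactly the paper's argument: the paper derives this corollary by combining Proposition \ref{cbsp-prop} (giving $DOCP \Leftrightarrow P4 \Leftrightarrow P5$) with Proposition \ref{order-dual-oc-prop} (giving $P2 \Leftrightarrow P4$ and $P3 \Leftrightarrow P5$), just as you do. Your check that the hypotheses of both propositions are satisfied verbatim is the only verification needed.
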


Now, we come to the main result of this paper.

\begin{theorem}\label{P1-characterization}
	Suppose that $X$ is a monotonically complete Banach lattice and $X_n^\sim$ contains a strictly positive element. If $X$ has $OSSP$, then the following statements are equivalent:
	\begin{enumerate}[$(1)$]
		\item $ X$ has property $P1$.
		\item $ X$ has $DOCP$ and either $ X$ or $ X^\sim_n$ is order continuous.
		\item $ X$ has $DOCP$ and $\sigma( X, X_n^\sim)$ has the Krein-Smulian property.
		\item either $ X$ or $ X^*$ is order continuous.
	\end{enumerate} 
\end{theorem}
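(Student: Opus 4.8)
The plan is to establish all four equivalences by routing every implication through statement $(2)$, i.e. to prove $(1)\Leftrightarrow(2)$, $(2)\Leftrightarrow(3)$ and $(2)\Leftrightarrow(4)$. Before starting I would record the consequences of the standing hypotheses so that the earlier results can be quoted verbatim: since $X$ is monotonically complete it is Dedekind complete with the weak Fatou property, and since $X_n^\sim$ carries a strictly positive functional $g$ it separates the points of $X$. The latter is because for $f\neq 0$ we have $\langle g,|f|\rangle>0$, so composing $g$ with the band projections onto $B(f^+)$ and $B(f^-)$ (available by Dedekind completeness) produces elements of $X_n^\sim$ whose values at $f$ are $\langle g,f^+\rangle$ and $-\langle g,f^-\rangle$, at least one of which is nonzero. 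Hence condition $(*)$ of Theorem \ref{sufficient-P1}, Theorem \ref{KSprop} and Proposition \ref{orderclosed-normorderclosed-orderctsdual} is automatically in force.

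The two ``soft'' equivalences come almost for free. For $(2)\Leftrightarrow(3)$ I would simply invoke Theorem \ref{KSprop}: under monotone completeness and $(*)$, $\sigma(X,X_n^\sim)$ has the Krein--Smulian property exactly when $X$ or $X_n^\sim$ is order continuous, so the second clauses of $(2)$ and $(3)$ coincide while the $DOCP$ clause is shared. For $(2)\Leftrightarrow(4)$ I would use Proposition \ref{dual-orderdual-B1} together with the trivial observation that order continuity of $X$ gives $X_n^\sim=X^*$, whence $\sigma(X,X_n^\sim)$ is the weak topology and $DOCP$ holds automatically. Concretely: if $X^*$ is order continuous then Proposition \ref{dual-orderdual-B1} yields that $X_n^\sim$ is order continuous and $X$ has $DOCP$, giving $(2)$; conversely, from $(2)$ either $X$ is order continuous (which is $(4)$) or $X_n^\sim$ is order continuous with $X$ having $DOCP$, and then Proposition \ref{dual-orderdual-B1} returns order continuity of $X^*$.

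For $(1)\Rightarrow(2)$ I would combine two earlier results: Theorem \ref{necessaryP4} shows that $P1$ forces $DOCP$, and Theorem \ref{sufficient-P1} (applicable by the first paragraph) shows that $P1$ forces $X$ or $X_n^\sim$ to be order continuous; together these are exactly $(2)$. The substance of the theorem is the reverse implication $(2)\Rightarrow(1)$, which I would prove by splitting on which space is order continuous. If $X$ is order continuous, Proposition \ref{a-orderclosed=sigmaclosed-orderctsnorm} gives $P1$ directly. The remaining case is the crux: $X_n^\sim$ order continuous with $X$ having $DOCP$. Here I would feed $DOCP$ into Corollary \ref{ocd-P2-P5-DOCP-equiv} (whose hypotheses---$X_n^\sim$ order continuous, a strictly positive functional, and $OSSP$---all hold) to conclude $P2$, and then apply Proposition \ref{orderclosed-normorderclosed-orderctsdual} (monotone completeness, $X_n^\sim$ order continuous, $(*)$), which says that $P1$ is equivalent to $P2$; hence $P1$ holds, closing the cycle.

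The genuine difficulty is concentrated in that last case and has already been discharged in the preparatory results, so the proof of the theorem is essentially an assembly argument: the main thing to watch is that every invoked lemma's hypotheses are verified, in particular that monotone completeness supplies Dedekind completeness and the weak Fatou property and that the strictly positive functional supplies $(*)$. The one step that is not purely formal is the passage from $DOCP$ to $P2$ in the order-continuous-dual case, since it secretly rests on the $OSSP$-based splitting argument of Proposition \ref{cbsp-prop}; I would therefore be careful to carry $OSSP$ explicitly through Corollary \ref{ocd-P2-P5-DOCP-equiv} rather than attempt to re-derive that equivalence by hand.
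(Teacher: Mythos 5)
Your proposal is correct and follows essentially the same route as the paper's proof: the equivalence of $(2)$, $(3)$, $(4)$ via Theorem \ref{KSprop} and Proposition \ref{dual-orderdual-B1}, the implication $(1)\Rightarrow(2)$ via Theorems \ref{sufficient-P1} and \ref{necessaryP4}, and $(2)\Rightarrow(1)$ by splitting into the case where $X$ is order continuous (Proposition \ref{a-orderclosed=sigmaclosed-orderctsnorm}) and the case where $X_n^\sim$ is order continuous (Corollary \ref{ocd-P2-P5-DOCP-equiv} together with Proposition \ref{orderclosed-normorderclosed-orderctsdual}). The only difference is that you spell out details the paper leaves implicit, such as why a strictly positive functional yields separation of points and why $DOCP$ is automatic when $X$ is order continuous.
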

\begin{proof}
Since $X_n^\sim$ contains a strictly positive element, $X_n^\sim$ separates points of $X$. By Theorem \ref{KSprop} and Proposition \ref{dual-orderdual-B1}, $(2),(3)$ and $(4)$ are equivalent. $(1)\implies (2)$ follows from Theorem \ref{sufficient-P1} and Theorem \ref{necessaryP4}. Assume that $(2)$ holds. If $X$ is order continuous, then it has property $P1$ by Proposition \ref{a-orderclosed=sigmaclosed-orderctsnorm}. If $ X_n^\sim$ is order continuous, then $X$ has property $P1$ by Proposition \ref{orderclosed-normorderclosed-orderctsdual} and Corollary \ref{ocd-P2-P5-DOCP-equiv}. Thus $(2)\implies(1)$.
\end{proof}

The results obtained in this section can be summarized into the following diagram.

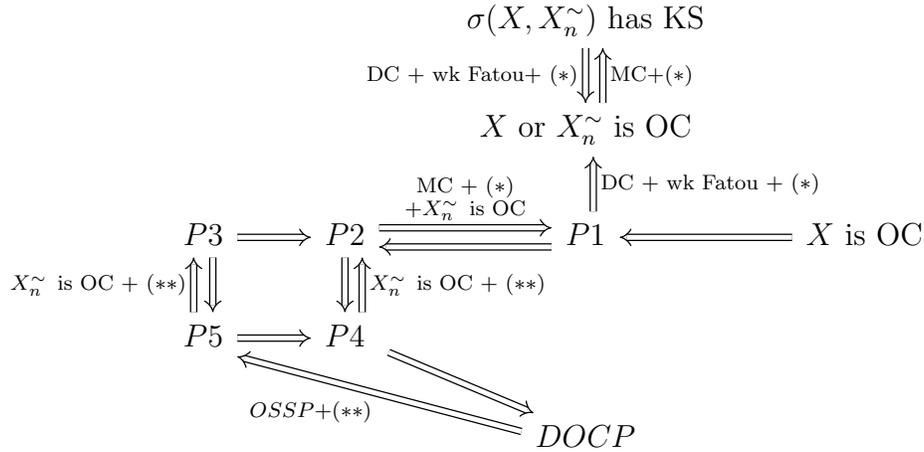
\begin{figure}[htbp]
\begin{center}
\begin{tikzcd}[arrows=Rightarrow]
{}&{}&\sigma( X, X_n^\sim)\ \text{has KS} \arrow[swap]{d}{\text{DC + wk Fatou+ ($*$)}}&{}\\
{}&{}& X\ \text{or}\  X_n^\sim\ \text{is OC} \arrow[xshift=0.7ex, shift right, swap]{u}{\text{MC+($*$)}}&{}\\
P3 \arrow{r}{} \arrow[xshift=0.7ex]{d}{}&P2 \arrow[yshift=0.7ex]{r}{\substack{ \text{MC + ($*$)}\\ + X_n^\sim\ \text{is OC}}}  \arrow{d}{}  & P1 \arrow[l, yshift=-0.7ex] \arrow[shift right, swap]{u}{\text{DC + wk Fatou + ($*$)}} &  X\ \text{is OC} \arrow{l}\\
P5 \arrow{r}{}  \arrow[xshift=-0.7ex]{u}{ X_n^\sim\ \text{is OC + } (**)}&P4 \arrow[xshift=0.7ex]{rd}{} \arrow[xshift=0.7ex, shift right, swap]{u}{ X_n^\sim\ \text{is OC + } (**)}& {}\\
{}&{}&DOCP \arrow[yshift=-0.7ex]{llu}{OSSP + (**)}
\end{tikzcd}
\caption{All relations between $P1$, $P2$, $P3$, $P4$, $P5$ and $DOCP$. (KS = Krein Smulian property, OC = order continuous, MC = monotonically complete, DC = Dedekind complete, ($*$) = $X_n^\sim$ separates points of $X$, ($**$) = $X_n^\sim$ contains a strictly positive element)}
\label{fig:complete relations}
\end{center}
\end{figure}

\section{Some examples}

In this section, we investigate property $P1$ for some known Banach lattices. In the first example, we provide a general class of Banach lattices (including the Orlicz spaces) to which Theorem \ref{P1-characterization} applies. In the second example, we give an example of a space that fails $DOCP$ and in particular, fails both $P1$ and $P2$.

\subsection{Banach lattices with modular conditions}

Let $X$ be a Banach lattice. A functional $\rho:X\to[0,\infty]$ is called a \textit{special modular} on $X$ if it is satisfies the following conditions
\begin{enumerate}[($M$1)]
	\item If $f\in X_+$ and $\rho(f)<\infty$, then $\rho(f_n)\to0$ whenever $f_n\xrightarrow{o}0$ and $0\leq f_n\leq f$.
	\item If $\{f_n\}$ is a sequence in $X$ such that $\sum_{n}\rho(f_n)<\infty$, then a subsequence of $\{f_n\}$ is order bounded.
	\item $\rho(f)<\infty$ for every $f$ in the closed unit ball of $X$.
\end{enumerate}

\begin{example}
Let $(\Omega,\Sigma,\mu)$ be a $\sigma$-finite measure space. Denote by $L^0(\Omega,\Sigma,\mu)$ be the vector lattice of all (equivalence classes with respect to equality a.e. of) real measurable functions on $\Omega$. A function $\varphi:\Omega\times [0,\infty)\to[0,\infty]$ is called a Musielak-Orlicz function if $\varphi(x,\cdot)$ is an Orlicz function for all $x\in\Omega$ and $\varphi(\cdot,t)$ is measurable for all $t\geq0$. For a Musielak-Orlicz function $\varphi$, the functional $\rho_\varphi:L^0(\Omega,\Sigma,\mu)\to[0,\infty]$, given by
	\[
	\rho_\varphi(f)=\int_{\Omega}\varphi(x,|f(x)|)\mathrm{d}\mu,
	\]
	is convex and defines the Musielak-Orlicz space
	\[
	L^\varphi=\{f\in L^0(\Omega,\Sigma,\mu):\rho_\varphi(\lambda f)<\infty\ \text{for some}\ \lambda>0\}
	\]
	with the Luxemburg norm
	\[
	\|f\|_\varphi=\inf\{\lambda>0:\rho_\varphi(f\slash\lambda)\leq1\}.
	\]
	Note that $L^\varphi$ is a Banach function space (i.e., a Banach lattice which is also an ideal of $L^0(\Omega,\Sigma,\mu)$). Furthermore, it is a generalization of Orlicz spaces. One can check that the functional $\rho_\varphi$ is a special modular on $L^\varphi$. See \cite{Mu} and \cite{Wn} for more details on Musielak-Orlicz spaces.
\end{example}

The existence of a special modular $\rho$ on a Dedekind complete Banach lattice $X$ will guarantee $OSSP$. If, in addition, $X$ has the countable sup property and $X_a$ is order dense in $X$, it also will guarantee $DOCP$. A Banach lattice $X$ is said to have the countable sup property if every set $E\subseteq X$ contains a countable subset $E_0$ such that $\sup E=\sup E_0$ whenever $\sup E$ exists in $X$.

\begin{proposition}\label{newnormbounded-closed}
	Let $X$ be a Dedekind complete Banach lattice which supports a special modular. If $\{f_n\}$ is a norm bounded sequence in $ X_+$ which uo-converges to 0, there is a subsequence $\{f_{n_k}\}$ of $\{f_n\}$ with a splitting
	\[
	f_{n_k}=y_k+z_k,
	\]
	where $y_k,z_k\geq0$, $\{y_k\}$ is a disjoint sequence and $\{z_k\}$ is order bounded. In particular, $X$ has OSSP. 
\end{proposition}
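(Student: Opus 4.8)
The plan is to reduce the statement to a form that $(M2)$ can finish, and then to build the disjoint part by a gliding-hump disjointification. It suffices to produce a subsequence $\{f_{n_k}\}$ together with pairwise disjoint elements $0\le y_k\le f_{n_k}$ whose residuals $z_k:=f_{n_k}-y_k$ satisfy $\sum_k\rho(z_k)<\infty$. Indeed, $(M2)$ then yields a further subsequence along which $\{z_k\}$ is order bounded; the corresponding $y_k$ remain disjoint, and taking $x_k=0$ gives both the asserted two-term splitting and $OSSP$. For the construction I would first normalize so that $\|f_n\|_X\le 1$ and set $u:=\sum_n 2^{-n}f_n$; then $\|u\|_X\le1$, so $\rho(u)<\infty$ by $(M3)$, while $f_n\le 2^nu$ shows each $f_n$ lies in the ideal generated by $u$. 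The one consequence of $(M1)$ I would use repeatedly is: if $h\in X_+$ and $\rho(h)<\infty$, then $f_n\wedge h\xrightarrow{o}0$ (since $f_n\xrightarrow{uo}0$) and $0\le f_n\wedge h\le h$, so $\rho(f_n\wedge h)\to0$; in particular $\rho(f_n\wedge f_m)\to0$ as $n\to\infty$ for each fixed $m$, because $\rho(f_m)<\infty$.

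Next I would disjointify. Choose $n_1<n_2<\cdots$ recursively so that the pairwise overlaps are modularly small, say $\rho(f_{n_k}\wedge f_{n_j})\le 2^{-k-j}$ for all $j<k$; at stage $k$ this is only finitely many conditions, each attainable by the displayed consequence of $(M1)$. For each $k$ the supremum $z_k:=\bigvee_{j\ne k}(f_{n_k}\wedge f_{n_j})$ exists, since it is bounded above by $f_{n_k}$ and $X$ is Dedekind complete; put $y_k:=f_{n_k}-z_k\ge 0$. Disjointness of $\{y_k\}$ is then automatic: because $z_k\ge f_{n_k}\wedge f_{n_l}$ we get $y_k\le f_{n_k}-f_{n_k}\wedge f_{n_l}=(f_{n_k}-f_{n_l})^+$ and likewise $y_l\le(f_{n_l}-f_{n_k})^+$, so $y_k\wedge y_l\le(f_{n_k}-f_{n_l})^+\wedge(f_{n_l}-f_{n_k})^+=0$ for $k\ne l$. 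Thus $f_{n_k}=y_k+z_k$ with $\{y_k\}$ disjoint, $0\le y_k,z_k\le f_{n_k}$, and $z_k$ carried by the overlaps of $f_{n_k}$ with the remaining selected terms.

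The crux is the estimate $\sum_k\rho(z_k)<\infty$. By the distributive law, $z_k=\bigvee_{j\ne k}(f_{n_k}\wedge f_{n_j})\le\sum_{j\ne k}(f_{n_k}\wedge f_{n_j})$, and the doubly indexed overlap family satisfies $\sum_{k\ne j}\rho(f_{n_k}\wedge f_{n_j})\le\sum_{k\ne j}2^{-k-j}<\infty$ by the choice of the subsequence. The step I expect to be hardest is precisely the passage from ``each overlap has small modular'' to ``$\{z_k\}$ is order bounded'': a special modular is not additive, or even subadditive, over these suprema, and a convexity estimate for $\rho\big(\sum_{j\ne k}(f_{n_k}\wedge f_{n_j})\big)$ inflates the summands to $\rho(\lambda^{-1}(f_{n_k}\wedge f_{n_j}))$, which $(M1)$ need not control because multiples of a fixed element can have infinite modular. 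I would therefore tighten the selection so that the weights in the convexity estimate are matched to the geometric bound $2^{-k-j}$, and use $(M2)$ on the overlap family to convert the resulting summable modular of the overlap series into the order bound for $\{z_k\}$. Once $\sum_k\rho(z_k)<\infty$ is secured, $(M2)$ produces the order-bounded subsequence of residuals and the argument closes exactly as in the first paragraph, so that $X$ has $OSSP$.
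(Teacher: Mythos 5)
Your skeleton is sound up to the crux, and the crux is a genuine gap. The reduction to producing disjoint $0\le y_k\le f_{n_k}$ with residuals handled by $(M2)$ is fine; the recursive choice of $n_k$ with $\rho(f_{n_k}\wedge f_{n_j})\le 2^{-k-j}$ is a legitimate use of $(M1)$; the supremum $z_k=\bigvee_{j\ne k}(f_{n_k}\wedge f_{n_j})$ exists by Dedekind completeness; and your disjointness argument for the $y_k$ is correct. But the passage from smallness of the pairwise overlaps to order boundedness of $\{z_k\}$ is exactly where the proof is missing, and the two repairs you sketch cannot work. First, a special modular is not assumed convex, monotone, or subadditive: axioms $(M1)$--$(M3)$ give no inequality bounding $\rho$ of a supremum (or of a dominating sum, which in any case need not exist as an element of $X$) by the moduli of its constituents, nor any way to deduce $\rho(z_k)$ is small from $z_k$ being dominated by something with small modular; so ``matching the weights in the convexity estimate'' has nothing to run on. Second, applying $(M2)$ to the doubly indexed overlap family $\{f_{n_k}\wedge f_{n_j}\}_{k\ne j}$ only yields an order bounded infinite \emph{subfamily of pairs}, and an infinite set of pairs need not contain all pairs drawn from any infinite set of indices (it could consist solely of the pairs $(k,k+1)$, say); hence one cannot pass to a subsequence of $\{f_{n_k}\}$ along which every $z_k$ is dominated. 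This mismatch between ``$(M2)$ gives a subsequence'' and ``you need all pairs within a subsequence'' is the real obstruction.

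The gap can be closed within your own strategy by controlling tail suprema instead of pairwise overlaps, which keeps the family to which $(M2)$ is applied singly indexed. For fixed $k$, the elements $T_{k,m}:=\sup_{j\ge m}(f_k\wedge f_j)$ exist by Dedekind completeness, satisfy $0\le T_{k,m}\le f_k$, and decrease to $0$ in order as $m\to\infty$ because $f_k\wedge f_j\xrightarrow{o}0$; since $\rho(f_k)<\infty$ by $(M3)$ (after your normalization $\|f_n\|_X\le1$), $(M1)$ gives $\rho(T_{k,m})\to0$. Choose $n_1<n_2<\cdots$ recursively with $\rho\left(T_{n_k,n_{k+1}}\right)\le 2^{-k}$; then $(M2)$ applied to the single sequence $\{T_{n_k,n_{k+1}}\}_k$ yields an infinite set $\{k_i\}$ and $g\in X_+$ with $T_{n_{k_i},n_{k_i+1}}\le g$ for all $i$. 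For $i<l$ we have $n_{k_l}\ge n_{k_i+1}$, so $f_{n_{k_i}}\wedge f_{n_{k_l}}\le T_{n_{k_i},n_{k_i+1}}\le g$: every pairwise overlap of the final subsequence is dominated by the single element $g$, so the suprema $z_i$ formed within this subsequence are order bounded by $g$ outright --- no modular estimate on them is needed --- and your disjointness argument finishes the proof. Note that this repaired argument is genuinely different from the paper's: there, the principal ideal generated by $\sum_n 2^{-n}f_n$ is represented as a $C(K)$-space and each $f_{n_k}$ is split into three pieces by open-and-closed sets (one below the unit, one disjoint, one whose modular is made summable directly, so that $(M2)$ applies to it), whereas the tail-supremum route avoids the representation theory entirely.
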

\begin{proof}
Let $\{f_n\}$ be a norm bounded sequence in $ X_+$ which uo-converges to 0. Without loss of generality, assume that $\{\|f_n\|_X\}$ is bounded by 1. Denote by $E$ the principal ideal generated by $f:=\sum_n\frac{1}{2^n}f_n$ in $X$. Note that $E$ is a Dedekind complete AM-space with unit $f$ (see \cite[p. 102]{Sc}). Thus there exists a compact Stonian space $K$ (i.e., each open subset of $K$ has open closure) such that $E$ is isomorphic (as a Banach lattice) to $C(K)$, i.e., the space of all real-valued continuous function on $K$. Therefore, we may view every element of $E$, including $f_n$, as a continuous function on $K$. In this case, $f$ can be viewed as the constant function $\rchi_K$ on $K$. Furthermore, for any open-and-closed set $A$ and $h\in C(K)$, $h\rchi_A\in C(K)$.

For each $n\geq 1$, let $D_n:=\overline{\left\{t\in K:f_n(t)>1\right\}}$ and $E_n=\overline{\bigcup_{i=n}^\infty D_i^c}$. Then $D_n$ and $E_n$ are open-and-closed sets in $K$. Let $\rho$ be a special modular on $X$. We claim that for every $n\geq 1$, there exists $m>n$ such that $\rho\left(f_{n}\rchi_{E_m\cap D_n}\right)<\frac{1}{2^n}$. Fix $n\geq 1$. Observe that for every $m>n$, $E_m\cap D_n$ is an open-and-closed set in $K$ and
\[
f_{n}\rchi_{E_m\cap D_n}\leq \|f_{n}\|_\infty \rchi_{E_m}\leq \|f_{n}\|_\infty\sup_{i\geq m}\left(f_i\wedge f\right).
\]
Since $f_m\xrightarrow{uo}0$, $\|f_{n}\|_\infty \sup_{i\geq m}\left(f_i\wedge f\right)\xrightarrow{o}0$ in $X$. It follows that $f_{n}\rchi_{E_m\cap D_n}\xrightarrow{o}0$ in $X$. Note that $0\leq f_n\rchi_{E_m\cap D_n}\leq f_n$ and $\rho(f_n)<\infty$ by $(M3)$. By $(M1)$, we can find $m>n$ such that $\rho\left(f_{n}\rchi_{E_m\cap D_n}\right)<\frac{1}{2^n}$. This proves the claim. 

From the claim, we can find a strictly increasing sequence $\{n_k\}$ such that $$\rho\left(f_{n_k}\rchi_{E_{n_{k+1}}\cap D_{n_{k}}}\right)<\frac{1}{2^k}$$ for every $k$. Set 
$A_k:=D_{n_k}^c$, $B_k:=E_{n_{k+1}}\cap D_{n_{k}}$ and $C_k:=E_{n_{k+1}}^c\cap D_{n_{k}}$. Then for every $k$, $A_k, B_k$ and $C_k$ are disjoint open-and-closed sets in $K$ such that
\begin{enumerate}[(i)]
		\item $A_k\cup B_k\cup C_k=K$,
		\item $f_{n_k}\rchi_{A_k}\leq \rchi_K=f$,
		\item $\rho\left(f_{n_k}\rchi_{B_k}\right)<\frac{1}{2^k}$,
		\item $C_k\subseteq A_{m}\subseteq C^c_{m}$ for every $m>k$.
\end{enumerate}
Define
	\[
	y_{k}:=f_{n_k}\rchi_{C_k},\quad z^{(1)}_{k}:=f_{n_k}\rchi_{A_k} \quad\text{and}\quad z^{(2)}_{k}:=f_{n_k}\rchi_{B_k}.
	\]
Then $y_k,z^{(1)}_k,z^{(2)}_k\in X_+$ and 
	\begin{enumerate}[(i)]
		\item $f_{n_k}=y_{k}+z^{(1)}_{k}+z^{(2)}_{k}$, 
		\item $z^{(1)}_{k}\leq f$,
		\item $\rho(z^{(2)}_{k})<\frac{1}{2^k}$,
		\item $\{y_k\}$ is a disjoint sequence.
	\end{enumerate}
	Clearly, $\{z^{(1)}_{k}\}$ is order bounded by $f$. By $(M2)$, $\{z^{(2)}_{k}\}$ has an order bounded subsequence. Without loss of generality, we may assume that $\{z^{(2)}_{k}\}$ is order bounded.  Hence, $\{z_k\}:=\{z^{(1)}_{k}+z^{(2)}_{k}\}$ is order bounded. Thus, there exists a subsequence $\{f_{n_k}\}$ of $\{f_n\}$ such that
	\[
	f_{n_k}=y_k+z_k
	\]
	where $y_k,z_k\geq0$, $\{y_k\}$ is a disjoint sequence and $\{z_k\}$ is order bounded.
\end{proof}

\begin{proposition}\label{modular-fatou-AMspace}
	Let $X$ be a Banach lattice with the countable sup property. If $X$ admits a special modular and $X_a$ is order dense in $X$, then $X$ has $DOCP$.
\end{proposition}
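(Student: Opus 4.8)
The plan is to show directly that any norm bounded disjoint sequence $\{f_n\}$ in $X_+$ with $f_n\xrightarrow{\sigma(X,X_n^\sim)}0$ converges weakly to $0$; by the subsequence principle it suffices to extract, from an arbitrary subsequence, a further subsequence that is weakly null. After rescaling we may assume $\|f_n\|_X\le 1$, so that $\rho(f_n)<\infty$ by $(M3)$, where $\rho$ is the given special modular. Since $\{f_n\}$ is disjoint it uo-converges to $0$, so Proposition \ref{newnormbounded-closed} yields a subsequence with a splitting $f_{n_k}=y_k+z_k$, where $y_k,z_k\ge 0$, $\{y_k\}$ is disjoint and $\{z_k\}$ is order bounded. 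Since $y_k,z_k\le f_{n_k}$, both $\{y_k\}$ and $\{z_k\}$ are in fact disjoint.

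The first ingredient is an elementary general fact: an order bounded disjoint positive sequence $\{w_k\}$ is automatically weakly null. Indeed, if $0\le w_k\le u$ and $\phi\in X^*_+$, then $\sum_{k=1}^N w_k=\bigvee_{k=1}^N w_k\le u$ gives $\sum_k\langle\phi,w_k\rangle\le\langle\phi,u\rangle<\infty$, whence $\langle\phi,w_k\rangle\to 0$; decomposing a general $\phi$ into positive and negative parts finishes it. Applied to $\{z_k\}$ this shows $z_k$ is weakly null. It remains to treat the (generally unbounded) disjoint ``tall'' part $\{y_k\}$, which is the heart of the matter.

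For $\{y_k\}$ I would exploit order density of $X_a$ together with $(M1)$ and $(M2)$. Because $X_a$ is order dense and $X$ has the countable sup property, for each $k$ we may write $y_k=\sup_m v^{(k)}_m$ with $v^{(k)}_m\uparrow y_k$ and $v^{(k)}_m\in X_a$, so that $y_k-v^{(k)}_m\xrightarrow{o}0$ as $m\to\infty$. As $0\le y_k-v^{(k)}_m\le f_{n_k}$ and $\rho(f_{n_k})<\infty$, property $(M1)$ forces $\rho(y_k-v^{(k)}_m)\to 0$, so we may pick $m=m(k)$ with $\rho(r_k)<2^{-k}$, where $r_k:=y_k-v^{(k)}_{m(k)}$ and $v_k:=v^{(k)}_{m(k)}\in X_a$. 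Then $\sum_k\rho(r_k)<\infty$, so $(M2)$ provides an order bounded subsequence of $\{r_k\}$; passing to it, $\{r_k\}$ is order bounded and disjoint (since $r_k\le y_k$), hence weakly null by the observation above. On the other hand $v_k=y_k-r_k\in X_a$, and since $f_{n_k}\xrightarrow{\sigma(X,X_n^\sim)}0$ while $z_k$ and $r_k$ are weakly, and thus $\sigma(X,X_n^\sim)$-, null, we get $v_k\xrightarrow{\sigma(X,X_n^\sim)}0$; Lemma \ref{Xa-sigma-weak} then makes $\{v_k\}$ weakly null. Therefore $f_{n_k}=v_k+r_k+z_k$ is weakly null, completing the extraction and hence the proof.

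The main obstacle is precisely the unbounded part $\{y_k\}$: unlike $\{z_k\}$ it is not order bounded, so the elementary weak-nullity argument does not apply and disjointness alone is insufficient. The decisive mechanism is the interplay between order density of $X_a$ (which lets us shave off an $X_a$-piece $v_k$ controlled by Lemma \ref{Xa-sigma-weak}) and the special modular: $(M1)$ guarantees that the shaved-off remainders are $\rho$-small, and $(M2)$ converts summable $\rho$-smallness back into order boundedness, returning us to the easy case. It is worth emphasizing that no convexity or homogeneity of $\rho$ is used anywhere, so only $(M1)$--$(M3)$ enter and not any Amemiya-type norm formula.
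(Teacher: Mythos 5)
Your argument has one genuine flaw, and it occurs in the very first step: you invoke Proposition \ref{newnormbounded-closed} to split $f_{n_k}=y_k+z_k$, but that proposition is stated for \emph{Dedekind complete} Banach lattices (its proof builds the Dedekind complete AM-space $C(K)$ with $K$ Stonian from the principal ideal generated by $\sum_n 2^{-n}f_n$), and Dedekind completeness is not among the hypotheses of Proposition \ref{modular-fatou-AMspace}: only the countable sup property, the existence of a special modular, and order density of $X_a$ are assumed. So as written this splitting is unjustified. Fortunately the step is also unnecessary, and deleting it repairs the proof: your shaving argument applies verbatim to $f_{n_k}$ itself (i.e., take $y_k=f_{n_k}$ and $z_k=0$). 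Since $X_a$ is an order dense ideal, $\{h\in X_a:0\le h\le f_{n_k}\}\uparrow f_{n_k}$; the countable sup property and directedness give an increasing sequence $v^{(k)}_m\uparrow f_{n_k}$ in $X_a$; $(M1)$ and $(M3)$ let you choose $v_k\in X_a$ with $r_k:=f_{n_k}-v_k$ satisfying $\rho(r_k)<2^{-k}$; $(M2)$ makes $\{r_k\}$ order bounded after passing to a subsequence, and it is disjoint since $r_k\le f_{n_k}$, so your partial-sum argument makes it weakly null; finally $v_k=f_{n_k}-r_k$ lies in $X_a$ and is $\sigma(X,X_n^\sim)$-null, hence weakly null by Lemma \ref{Xa-sigma-weak}. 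With this correction the proof is complete.

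Modulo that repair, your proof follows the same main line as the paper's: decompose each term into an $X_a$-part, handled by Lemma \ref{Xa-sigma-weak}, and a modular-small remainder, using order density of $X_a$, the countable sup property, $(M1)$, and $(M2)$ (the paper shaves via the downward directed set $\{h: f-h\in X_a,\ 0<h<f\}\downarrow 0$, you via $\{h\in X_a:0\le h\le f\}\uparrow f$; these are equivalent). Where you genuinely differ --- and in my view improve on the paper --- is the treatment of the remainder. The paper argues by contradiction: if the remainder sequence were not weakly null, Lemma \ref{weak-l1} would produce a subsequence equivalent to the $\ell^1$ basis, and then order boundedness from $(M2)$ plus uo-convergence of disjoint sequences would force $0\in\overline{\text{co}}^{\,o}$, contradicting Corollary \ref{0notinconvexfn}. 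You instead prove directly and elementarily that an order bounded disjoint positive sequence is weakly null, via $\sum_{k=1}^N w_k=\bigvee_{k=1}^N w_k\le u$, which bounds $\sum_k\langle\phi,w_k\rangle$ for each $\phi\in X^*_+$. This bypasses Lemma \ref{weak-l1} and Corollary \ref{0notinconvexfn} entirely and makes that half of the argument self-contained.
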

\begin{proof} Let $\rho$ be a special modular on $X$ and $\{f_n\}$ be a norm bounded disjoint sequence in $X_+$ which $\sigma(X,X_n^\sim)$-converges to 0. Without loss of generality, we assume that $\{f_n\}$ is norm bounded by 1 (hence, $\rho(f_n)<\infty$ by $(M3)$). Since $X_a$ is order dense in $ X$, $\{h:f-h\in  X_a, 0<h<f\}\downarrow0$ for any $f\in  X_+$ (see \cite[ Theorem 3.1]{AB}). By the countable sup property and $(M1)$, we can find a sequence $\{h_n\}$ in $X_a$ such that $0\leq h_n\leq f_n$, $f_n-h_n\in X_a$ and $\rho(h_n)\leq \frac{1}{2^n}$ for every $n$. Since $f_n-h_n\xrightarrow{\sigma(X,X_n^\sim)}0$, $\{f_n-h_n\}$ converges weakly to 0 by Lemma \ref{Xa-sigma-weak}. It remains to show that $\{h_n\}$ converges weakly to 0. 
	
Suppose that $\{h_n\}$ does not converge weakly to 0. By Lemma \ref{weak-l1}, it has a subsequence $\{h_{n_k}\}$ which is isomorphic to $\ell^1$ basis. Since $\{h_{n_k}\}$ is a disjoint sequence, $0\notin \overline{\text{co}(\{h_{n_k}\})}^o$ by Corollary \ref{0notinconvexfn}. On the other hand, since $\sum_k\rho(h_{n_k})<\infty$, $\{h_{n_k}\}$ has an order bounded subsequence by $(M2)$. Note that any disjoint sequence uo-converges to 0 (\cite[Corollary 3.6]{GTX}). It follows that $\{h_{n_k}\}$ has a subsequence which order converges to 0. This implies that $0\in \overline{\text{co}(\{h_{n_k}\})}^o$, a contradiction. Thus, $\{h_n\}$ converges weakly to 0. 
\end{proof}

When $X$ is $\sigma$-Dedekind complete and admits a strictly positive linear functional, it has the countable sup property (see, e.g., the corollary after \cite[Proposition 4.9, p. 78]{Sc}). From Proposition \ref{newnormbounded-closed} and Proposition \ref{modular-fatou-AMspace}, we obtain the following special case of Theorem \ref{P1-characterization}. 

\begin{theorem}
Suppose that $X$ is a monotonically complete Banach lattice and $X_n^\sim$ contains a strictly positive element. If $X$ admits a special modular, then the following statements are equivalent:
	\begin{enumerate}[$(1)$]
		\item $X$ has property $P1$.
		\item Either $X$ or $ X^*$ is order continuous.
	\end{enumerate} 
If, in addition, $X_a$ is order dense in $X$, then they are also equivalent to the following statements: 
\begin{enumerate}[$(1)$]\setcounter{enumi}{2}
	\item Either $ X$ or $ X^\sim_n$ is order continuous.
	\item $\sigma( X, X_n^\sim)$ has the Krein-Smulian property.
\end{enumerate} 
\end{theorem}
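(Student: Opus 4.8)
The plan is to deduce the final theorem from Theorem~\ref{P1-characterization} by verifying that the two explicit hypotheses in play---$X$ admits a special modular, and (for the second half) $X_a$ is order dense in $X$---supply exactly the structural conditions that Theorem~\ref{P1-characterization} requires, namely $OSSP$ and $DOCP$. Since $X$ is already assumed to be monotonically complete with $X_n^\sim$ containing a strictly positive element, the standing hypotheses of Theorem~\ref{P1-characterization} are met once we produce $OSSP$.

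First I would invoke Proposition~\ref{newnormbounded-closed}: because $X$ is monotonically complete it is Dedekind complete (as noted after the weak Fatou property discussion in Section~1), so the existence of a special modular immediately yields $OSSP$. With $OSSP$ in hand, Theorem~\ref{P1-characterization} applies verbatim, and its equivalence of statements $(1)$ and $(4)$ gives precisely the equivalence of ``$X$ has $P1$'' and ``either $X$ or $X^*$ is order continuous.'' This disposes of the first two-item list.

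For the augmented equivalence under the extra hypothesis that $X_a$ is order dense in $X$, the key observation is that this hypothesis forces $DOCP$ for free. Indeed, since $X_n^\sim$ contains a strictly positive element and $X$ is monotonically complete (hence $\sigma$-Dedekind complete), $X$ has the countable sup property by the corollary after \cite[Proposition 4.9, p.~78]{Sc} cited in the text. Then Proposition~\ref{modular-fatou-AMspace}---whose hypotheses are now exactly the countable sup property, the special modular, and order density of $X_a$---yields $DOCP$. Once $DOCP$ holds automatically, statement $(2)$ of Theorem~\ref{P1-characterization}, which reads ``$X$ has $DOCP$ and either $X$ or $X_n^\sim$ is order continuous,'' collapses to just ``either $X$ or $X_n^\sim$ is order continuous''; similarly statement $(3)$ collapses to ``$\sigma(X,X_n^\sim)$ has the Krein-Smulian property.'' Thus items $(3)$ and $(4)$ of the present theorem become equivalent to items $(1)$ and $(2)$ through the chain $(2)\Leftrightarrow(3)\Leftrightarrow(4)$ of Theorem~\ref{P1-characterization}.

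I do not anticipate a genuine obstacle here, since the theorem is essentially a packaging of the machinery already assembled. The only point requiring care is the logical bookkeeping in the second half: one must check that $DOCP$ is truly redundant given order density of $X_a$, so that the conjunctive statements $(2)$ and $(3)$ of Theorem~\ref{P1-characterization} genuinely reduce to their order-continuity and Krein-Smulian clauses alone. The mildly delicate step is confirming the countable sup property from the strictly positive functional, but this is a standard consequence and is flagged explicitly in the surrounding text, so the argument is a clean chaining of the established propositions rather than any new estimate.
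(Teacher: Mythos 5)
Your proposal is correct and is essentially the paper's own argument: the paper derives this theorem as a special case of Theorem~\ref{P1-characterization} by noting that Dedekind completeness (from monotone completeness) plus the special modular gives $OSSP$ via Proposition~\ref{newnormbounded-closed}, and that the countable sup property (from $\sigma$-Dedekind completeness and the strictly positive functional) together with the modular and order density of $X_a$ gives $DOCP$ via Proposition~\ref{modular-fatou-AMspace}, so that statements $(2)$ and $(3)$ of Theorem~\ref{P1-characterization} collapse to their second clauses. Your logical bookkeeping, including the reduction of the conjunctive statements once $DOCP$ is automatic, matches the intended proof exactly.
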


Recall that any Banach function space admits a strictly positive order continuous functional (\cite[Proposition 5.19]{GTX}) and in case that $X$ is a Musielak-Orlicz space, $X$ is monotonically complete and $X_a$ is order dense in $X$ (see, e.g., the remark after \cite[Theorem 1.19]{Wn}). Hence, we obtain the following corollaries. Corollary \ref{P1-Musielak-Orlicz} explains why Theorem \ref{P1-characterization} can be seen as a generalization of \cite[Theorem 3.7]{GLX}. 

\begin{corollary}
	Let $X$ be a monotonically complete Banach function space. If $X$ admits a special modular and $X_a$ is order dense in $ X$, then the following statements are equivalent:
	\begin{enumerate}[$(1)$]
		\item $ X$ has property $P1$.
		\item Either $ X$ or $ X^*$ is order continuous.
		\item Either $ X$ or $ X^\sim_n$ is order continuous.
		\item $\sigma( X, X_n^\sim)$ has the Krein-Smulian property.
	\end{enumerate} 
\end{corollary}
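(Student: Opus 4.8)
The plan is to obtain this corollary as an immediate specialization of the preceding theorem (the one stated just above, with the "if, in addition" clause concerning statements $(3)$ and $(4)$), so that essentially all of the substantive content is already in place. The only hypothesis of that theorem which is not literally assumed in the corollary is the requirement that $X_n^\sim$ contain a strictly positive element; everything else --- $X$ monotonically complete, $X$ admitting a special modular, and $X_a$ order dense in $X$ --- is assumed verbatim. Thus the entire task reduces to supplying that one missing ingredient for a Banach function space.

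First I would invoke the recalled fact that any Banach function space admits a strictly positive order continuous functional (\cite[Proposition 5.19]{GTX}). Such a functional lies in $X_n^\sim$ precisely because it is order continuous, and it is strictly positive as a lattice element (equivalently, strictly positive as a functional on $X_+$, vanishing on no nonzero positive vector). Hence $X_n^\sim$ contains a strictly positive element, and in particular $X_n^\sim$ separates points of $X$. This is the one step where some care is warranted: I must match the notion of \emph{strictly positive order continuous functional} provided by \cite{GTX} with the notion of \emph{strictly positive element of $X_n^\sim$} used as a hypothesis in the preceding theorem; these coincide, but the verification is a matter of unwinding the two definitions.

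With that ingredient secured, all hypotheses of the preceding theorem are met, and I would apply it directly. Its conclusion asserts the equivalence of property $P1$, order continuity of $X$ or $X^*$, order continuity of $X$ or $X_n^\sim$, and the Krein--Smulian property of $\sigma(X, X_n^\sim)$, which are exactly statements $(1)$--$(4)$ of the corollary. Since $X_a$ is assumed order dense in $X$, the full set of four equivalences (rather than only $(1)\Leftrightarrow(2)$) is available.

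I expect no genuine obstacle here beyond the definitional bookkeeping in the second step. All the hard analysis is already carried out upstream: the splitting that yields $OSSP$ is furnished by Proposition \ref{newnormbounded-closed}, the verification of $DOCP$ under order density of $X_a$ is Proposition \ref{modular-fatou-AMspace}, and the main equivalence machinery is Theorem \ref{P1-characterization}. The corollary simply packages these for the concrete setting of Banach function spaces, where the strict-positivity hypothesis is automatic.
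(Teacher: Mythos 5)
Your proposal is correct and follows exactly the paper's own route: the paper likewise derives this corollary by noting that any Banach function space admits a strictly positive order continuous functional (\cite[Proposition 5.19]{GTX}), which supplies the one missing hypothesis of the preceding theorem, and then applies that theorem verbatim. Your extra care in matching ``strictly positive order continuous functional'' with ``strictly positive element of $X_n^\sim$'' is fine and does not change the argument.
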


\begin{corollary}\label{P1-Musielak-Orlicz}
If $ X$ is a Musielak-Orlicz space, then the following statements are equivalent:
	\begin{enumerate}[$(1)$]
		\item $ X$ has property $P1$.
		\item Either $ X$ or $ X^*$ is order continuous.
		\item Either $ X$ or $ X^\sim_n$ is order continuous.
		\item $\sigma( X, X_n^\sim)$ has the Krein-Smulian property.
	\end{enumerate} 
\end{corollary}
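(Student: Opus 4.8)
The plan is to obtain Corollary~\ref{P1-Musielak-Orlicz} as a direct specialization of the preceding corollary, which already establishes the equivalence of $(1)$--$(4)$ for any monotonically complete Banach function space that admits a special modular and in which $X_a$ is order dense. It therefore suffices to verify that every Musielak-Orlicz space $X=L^\varphi$ satisfies these three hypotheses, after which the four equivalences transfer verbatim.

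First, $L^\varphi$ is a Banach function space by construction, being an ideal of $L^0(\Omega,\Sigma,\mu)$ under the Luxemburg norm; as recorded in the remark after \cite[Theorem 1.19]{Wn}, it is moreover monotonically complete and its order continuous part $X_a$ is order dense in $X$. Second, the Example establishes that the convex functional $\rho_\varphi(f)=\int_\Omega\varphi(x,|f(x)|)\,\mathrm d\mu$ is a special modular on $L^\varphi$: here $(M3)$ is the standard fact that $\|f\|_\varphi\le 1$ forces $\rho_\varphi(f)\le 1<\infty$, $(M1)$ is a dominated-convergence argument with $\rho_\varphi(f)<\infty$ furnishing the integrable dominating datum, and $(M2)$ encodes the interplay between summability of the modular and order boundedness in $L^\varphi$. (One need not separately produce a strictly positive order continuous functional, since that hypothesis is automatic for Banach function spaces and is already folded into the preceding corollary.)

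With all three hypotheses confirmed, I would simply invoke the preceding corollary to conclude that statements $(1)$--$(4)$ are equivalent for $X=L^\varphi$, which is exactly the assertion. Because the result is a straight specialization, there is no genuine analytic obstacle; the only point deserving care is the verification of $(M2)$, where the specific growth of the Musielak-Orlicz function is actually used, but this is already subsumed in the Example's claim that $\rho_\varphi$ is a special modular.
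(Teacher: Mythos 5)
Your proposal is correct and follows essentially the same route as the paper: the paper also obtains this corollary as a direct specialization of the preceding Banach function space corollary, citing the remark after \cite[Theorem 1.19]{Wn} for monotone completeness and order density of $X_a$, the Example for $\rho_\varphi$ being a special modular, and \cite[Proposition 5.19]{GTX} for the (automatic) strictly positive order continuous functional. No gaps.
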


From Figure \ref{fig:complete relations}, one can see that $DOCP$ is the weakest of the properties considered in Section 2. Note that any Musielak-Orlicz space has $DOCP$ (see Proposition \ref{modular-fatou-AMspace}). Hence, it is interesting to see a Banach lattice failing $DOCP$ which we will give in the next example. 

\subsection{Ces\`{a}ro function spaces}

Ces\`{a}ro function spaces $\text{Ces}_p[0,1]$, $1\leq p\leq\infty$, are classes of all Lebesque measurable real functions $f$ on $[0,1]$ such that
\[
\|f\|_{Ces_p[0,1]}=\left[\int_0^1\left(\frac{1}{x}\int_0^x|f(t)|dt\right)^pdx\right]^{1\slash p}<\infty\quad\text{for}\ 1\leq p<\infty
\]
and
\[
\|f\|_{Ces_\infty[0,1]}=\sup_{0<x\leq1}\frac{1}{x}\int_0^x|f(t)|dt<\infty\quad\text{for}\ p=\infty.
\]
The space $\text{Ces}_p[0,1]$ is a Dedekind complete Banach lattice. It is easy to check that for $1\leq p<\infty$, $\text{Ces}_p[0,1]$ is order continuous. For more details on Ces\`{a}ro function spaces see \cite{LZ2} and \cite{AM}.

\begin{theorem}
	Let $ X=\text{Ces}_p[0,1]$. 
	\begin{enumerate}[$(1)$]
		\item If $1\leq p<\infty$, then $ X$ has property $P1$.
		\item If $p=\infty$, then $ X$ does not have $DOCP$. Therefore, $ X$ does not have property $P2$ (and hence $P1$).
	\end{enumerate}
\end{theorem}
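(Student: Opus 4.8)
The plan is to dispose of part (1) in one line and to concentrate all the work on the construction in part (2).

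For (1), recall that for $1\le p<\infty$ the space $X=\mathrm{Ces}_p[0,1]$ is order continuous, as already noted. Since $X$ is also Dedekind complete, Proposition \ref{a-orderclosed=sigmaclosed-orderctsnorm} applies verbatim and yields property $P1$; nothing further is needed. For (2), the strategy is to exhibit a single norm bounded disjoint positive sequence $\{f_n\}$ in $X=\mathrm{Ces}_\infty[0,1]$ that is $\sigma(X,X_n^\sim)$-null but not weakly null. By Theorem \ref{necessaryP4} this already rules out $P1$, and since $P2\Rightarrow P4\Rightarrow DOCP$ (the arrow $P2\to P4$ of Figure \ref{fig:complete relations} together with Theorem \ref{necessaryP4}) it rules out $P2$ as well. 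By Lemma \ref{weak-l1}, "not weakly null" for a norm bounded disjoint positive sequence is the same as "$\ell^1$-isomorphic", so the task is to build a disjoint positive $\ell^1$-sequence that is simultaneously killed by every order continuous functional.

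The key tension is this: a disjoint normalized sequence built from single intervals near $0$ is $\sigma(X,X_n^\sim)$-null but spans only a copy of $c_0$, whereas a sequence whose mass survives at a fixed positive scale spans $\ell^1$ but is detected by the order continuous integration functional $\int_0^1\,\cdot\,dt\in X_n^\sim$. I would reconcile these by spreading each $f_n$ uniformly across all small dyadic scales. Concretely, set $R_k=(2^{-k-1},2^{-k}]$ and, inside each $R_k$, carve out pairwise disjoint subintervals $R_{k,n}$ of length $2^{-n}|R_k|$ for $1\le n\le k$ (possible since $\sum_{n\le k}2^{-n}<1$). Put $A_n=\bigcup_{k\ge n}R_{k,n}$ and $f_n=2^{n}\chi_{A_n}$. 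Then the $A_n$ are pairwise disjoint, $A_n\subseteq(0,2^{-n}]$, and a direct computation gives $|A_n\cap[0,2^{-m}]|=2^{-n}2^{-m}$ for every $m\ge n$; consequently $\|\chi_{A_n}\|_{\mathrm{Ces}_\infty}\asymp 2^{-n}$, so $\{f_n\}$ is a norm bounded disjoint positive sequence.

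Two verifications remain. First, the $\ell^1$ lower estimate: evaluating the Cesàro average at $x=2^{-m}$ gives, for $c_n\ge0$, $\bigl\|\sum_n c_nf_n\bigr\|_{\mathrm{Ces}_\infty}\ge 2^{m}\int_0^{2^{-m}}\sum_n c_n 2^{n}\chi_{A_n}\ge\sum_{n\le m}c_n$, and letting $m\to\infty$ yields $\bigl\|\sum_n c_nf_n\bigr\|\ge\sum_n c_n$; together with the trivial bound $\|\sum_n c_nf_n\|\le\sum_n c_n\|f_n\|$ and disjointness (so that signs may be absorbed), $\{f_n\}$ is isomorphic to the $\ell^1$ basis, hence not weakly null by Lemma \ref{weak-l1}. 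Second, $\sigma(X,X_n^\sim)$-nullity: identifying $X_n^\sim$ with the Köthe dual of $\mathrm{Ces}_\infty$, whose elements are the $g$ acting by $f\mapsto\int_0^1 fg$ with $\tilde g(x):=\esssup_{t\ge x}|g(t)|$ lying in $L^1[0,1]$ (cf. \cite{AM}), I obtain $|\langle g,f_n\rangle|\le\int_0^1 f_n\tilde g\le\int_0^{2^{-n}}\tilde g\to0$ for every $g\in X_n^\sim$, the middle inequality being an integration by parts that uses $\int_0^x f_n\le x$ and the monotonicity of $\tilde g$. Thus $\{f_n\}$ witnesses the failure of $DOCP$, whence $X$ fails $P2$ and $P1$.

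The main obstacle is precisely the simultaneous requirement in the last paragraph: making $\{f_n\}$ span $\ell^1$ forces the masses to be registered together at some scale, while $\sigma(X,X_n^\sim)$-nullity forces the masses toward $0$. The multi-scale spreading of each $A_n$ (uniform dyadic density $2^{-n}$ down to scale $0$) is the device that lets a single small-scale average at $x=2^{-m}$ see all of $f_1,\dots,f_m$ with full weight while each individual support still collapses to $0$. I expect the delicate points to be the uniform bookkeeping $\|\chi_{A_n}\|_{\mathrm{Ces}_\infty}\asymp 2^{-n}$ and the integration-by-parts bound for the dual pairing.
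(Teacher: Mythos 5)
Your proposal is correct and follows essentially the same route as the paper: part (1) is the paper's argument verbatim (order continuity plus Proposition \ref{a-orderclosed=sigmaclosed-orderctsnorm}), and part (2) builds, exactly as the paper does, a norm bounded disjoint positive sequence spread over infinitely many scales (dyadic in your construction, harmonic $b_m=\tfrac1m$ in the paper's) that is isomorphic to the $\ell^1$ basis yet $\sigma(X,X_n^\sim)$-null via the Luxemburg--Zaanen identification $(\text{Ces}_\infty[0,1])_n^\sim=\tilde{L}^1[0,1]$, concluding through Lemma \ref{weak-l1}, Corollary \ref{0notinconvexfn}/Theorem \ref{necessaryP4} and the chain $P1\Rightarrow P2\Rightarrow P4\Rightarrow DOCP$. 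The only blemish is cosmetic: without specifying where each $R_{k,n}$ sits inside $R_k$ you only get $\int_0^x f_n\,dt\le 2x$ rather than $\le x$, so the Hardy-type bound becomes $|\langle g,f_n\rangle|\le 2\int_0^{2^{-n}}\tilde{g}\,dx\to0$, which changes nothing.
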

\begin{proof}
	If $1\leq p<\infty$, $ X$ is order continuous and by Proposition \ref{a-orderclosed=sigmaclosed-orderctsnorm}, $ X$ has property $P1$. Now, suppose that $p=\infty$. We will show that there exists a norm bounded disjoint sequence $\{f_n\}$ in $ X_+$ such that $f_n\xrightarrow{\sigma( X, X_n^\sim)}0$ but $0\notin\overline{\text{co}(\{f_n\})}^{o}$. Note that this will imply that $ X$ does not have $DOCP$ because otherwise $0\in \overline{\text{co}(\{f_n\})}^{\sigma( X, X^*)}=\overline{\text{co}(\{f_n\})}^{\|\cdot\|_{ X}}\subseteq \overline{\text{co}(\{f_n\})}^{o}$, a contradiction.
	
	Let $b_m=\frac{1}{m}$ and $a_{mn} =b_m-\frac{n(b_m-b_{m+1})}{m+1}$ for $m\in\mathbb{N}$ and $0\leq n\leq m$. Then $b_{m+1}<a_{mn}<\dotsc< a_{m,0}=b_{m}$. Define
	\[
	f_n=\sum_{m=n}^\infty c_{mn}\rchi_{[a_{mn},a_{m,n-1})}
	\]
	where  $c_{mn}=\frac{b_{m+1}-b_{m+2}}{a_{m,n-1}-a_{mn}}$. Observe that $\{f_n\}$ is a disjoint positive sequence and
	\[
	\int_0^xf_n(t)dt=
	\left\{
	\begin{array}{ccl}
	b_{m+2}&\text{if}&b_{m+1}<x<a_{mn},\ m\geq n\\
	c_{mn}(x-a_{mn})+b_{m+2}&\text{if}&a_{mn}\leq x\leq a_{m,n-1},\ m\geq n\\
	b_{m+1}&\text{if}&a_{m,n-1}<x\leq b_{m},\ m\geq n\\
	b_{n+1}&\text{if}&x>b_n
	\end{array}
	\right.
	\]
	for every $n\in\mathbb{N}$ and $x\in (0,1]$. Then
	\[
	\|f_n\|_{Ces_\infty[0,1]}=\sup_{0<x\leq1}\frac{1}{x}\int_0^x|f_n(t)|dt\leq\sup\left\{\sup_{m\geq n}\frac{b_{m+1}}{b_m},1\right\}=1
	\]
	and so, $\{f_n\}$ is norm bounded in $ X$. Recall that the order continuous dual of $ X$ is given as follows:
	\[
	 X_n^\sim=\left(Ces_\infty[0,1]\right)^\sim_n=\tilde{L}^1[0,1]
	\] 
	where $\|f\|_{\tilde{L}^1[0,1]}=\int_0^1\tilde{f}(x)dx$ and $\tilde{f}(x)=\esssup_{t\in[x,1]}|f(t)|$ (\cite[Theorem 4.4]{LZ2}). Therefore, to prove that $f_n\xrightarrow{\sigma( X, X_n^\sim)}0$, it is enough to show that $\int_0^1f_n(t)g(t)dt\to0$ for any decreasing function $g\in L^1[0,1]$. However, this is clear since 
	\[
	\int_0^1f_n(t)g(t)dt\leq\sum_{m=n}^\infty g(b_{m+1})(b_{m+1}-b_{m+2})\leq \int_0^{b_{n+1}}g(t)dt\to0
	\]
	as $n\to\infty$ for every decreasing function $g$ in $L^1[0,1]$.
	
	Now, observe that for every $\{d_n\}$, 
	\begin{eqnarray*}
	\left\|\sum_{n=1}^kd_nf_n\right\|_{Ces_\infty[0,1]}&=&\sup_{0<x\leq1}\frac{1}{x}\int_0^x\sum_{n=1}^k|d_n|f_n(t)dt\\
	&\geq& \frac{1}{b_{k+1}}\int_0^{b_{k+1}}\sum_{n=1}^k|d_n|f_n(t)dt\\
	&=&\frac{b_{k+2}}{b_{k+1}}\sum_{n=1}^k|d_n|\\
	&>&\frac{1}{2}\sum_{n=1}^k|d_n|.
	\end{eqnarray*}
	It follows that $\{f_n\}$ is isomorphic to $\ell^1$ basis. By Corollary \ref{0notinconvexfn}, we deduce that $0\notin \overline{\text{co}(\{f_n\})}^o$.
\end{proof}

\section{Order closedness and  $\sigma( X, X_{uo}^\sim)$-closedness of convex sets in Banach lattices}

Let $ X$ be a Banach lattice. Observe that that $\overline{E}^{o}\subseteq \overline{E}^{\sigma( X, X_{n}^\sim)}\subseteq \overline{E}^{\sigma( X, X_{uo}^\sim)}$ for every set $E$ in $ X$. It follows that every $\sigma( X, X_{uo}^\sim)$-closed convex set is $\sigma( X, X_{n}^\sim)$-closed and order closed.  Then it is natural to ask a problem similar to Problem \ref{mainproblem} for the topology $\sigma( X, X_{uo}^\sim)$:
\begin{problem}\label{sideproblem}
Let $ X$ be a Banach lattice. Is it true that every order closed convex set is  $\sigma( X, X_{uo}^\sim)$-closed?
\end{problem}
In this section, we will give an answer to this problem.

\begin{proposition}\label{uo-P4-orderctsdual}
	Let $ X$ be a Banach lattice. If every norm bounded order closed convex set is $\sigma( X, X_{uo}^\sim)$-closed, then $X^*$ is order continuous.
\end{proposition}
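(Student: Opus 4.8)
The plan is to prove the contrapositive through the disjoint-sequence characterization of order continuity of the norm dual. Recall from \cite[Theorem 3.1]{Wn} that $X^*$ is order continuous if and only if every norm bounded disjoint sequence in $X_+$ converges weakly to $0$. So I would assume that $X^*$ is \emph{not} order continuous and aim to exhibit a norm bounded order closed convex set that fails to be $\sigma(X,X_{uo}^\sim)$-closed, contradicting the hypothesis. Under this assumption there is a norm bounded disjoint sequence in $X_+$ that is not weakly null, and Lemma \ref{weak-l1} lets me pass to a subsequence, which I relabel $\{f_n\}$, that is isomorphic to the $\ell^1$ basis.

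The candidate set is $C:=\overline{\text{co}(\{f_n\})}^o$. By Proposition \ref{cvxnormbddseq-oclosed}, $C$ is order closed and equals $\{\sum_{i}a_if_i:a_i\geq0,\ \sum_i a_i=1\}$; since the $f_n$ are uniformly norm bounded, each such series converges in norm with norm at most $\sup_n\|f_n\|_X$ (by the triangle inequality, as $\sum_i a_i=1$), so $C$ is norm bounded. Thus $C$ is a norm bounded order closed convex set, and the hypothesis forces $C$ to be $\sigma(X,X_{uo}^\sim)$-closed.

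The crucial observation is that a disjoint sequence needs no extra structure to be tested against $X_{uo}^\sim$: every disjoint sequence uo-converges to $0$ by \cite[Corollary 3.6]{GTX}, and since $\{f_n\}$ is norm bounded, the very definition of the uo-dual gives $f_n\xrightarrow{\sigma(X,X_{uo}^\sim)}0$. As $f_n\in \text{co}(\{f_n\})\subseteq C$, this yields $0\in\overline{C}^{\sigma(X,X_{uo}^\sim)}=C$. On the other hand, Corollary \ref{0notinconvexfn} gives $0\notin \overline{\text{co}(\{f_n\})}^o=C$, a contradiction. Hence $X^*$ must be order continuous.

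There is no serious obstacle once the pieces are assembled; the argument is a short combination of already-established lemmas. The only point requiring a little care is confirming that $C$ is norm bounded so that the hypothesis actually applies, and the conceptual heart is recognizing that $\sigma(X,X_{uo}^\sim)$-convergence of a norm bounded disjoint sequence comes for free from uo-nullity. This is precisely what makes the argument go through for $X_{uo}^\sim$ without any assumption that $X_n^\sim$ be order continuous, in contrast to the $\sigma(X,X_n^\sim)$ setting.
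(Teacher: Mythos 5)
Your proof is correct and follows essentially the same route as the paper's: both pass (via \cite[Theorem 3.1]{Wn} and Lemma \ref{weak-l1}) to a disjoint sequence equivalent to the $\ell^1$ basis, take the order closure of its convex hull, invoke Proposition \ref{cvxnormbddseq-oclosed}, and derive a contradiction between $f_n\xrightarrow{\sigma(X,X_{uo}^\sim)}0$ and Corollary \ref{0notinconvexfn}. Your explicit verification that the set is norm bounded is a small point the paper leaves implicit, but it is needed to apply the hypothesis, so including it is a good touch.
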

\begin{proof}
	By \cite[Theorem 3.1]{Wn} and Lemma \ref{weak-l1}, it is enough to show that every norm bounded disjoint sequence in $ X_+$ is not isomorphic to $\ell^1$ basis. Suppose not. Then there is a norm bounded disjoint sequence $\{f_n\}$ in $ X_+$ which is isomorphic to $\ell^1$ basis.  By Proposition \ref{cvxnormbddseq-oclosed}, $\overline{\text{co}(\{f_n\})}^o$ is order closed and hence, it is $\sigma( X, X_{uo}^\sim)$-closed by the assumption. Since $\{f_n\}$ is a disjoint sequence, $f_n\xrightarrow{uo}0$ and hence, $f_n\xrightarrow{\sigma ( X, X_{uo}^\sim)}0$. It follows that $0\in  \overline{\text{co}(\{f_n\})}^{o}$. This contradicts Corollary \ref{0notinconvexfn}.
\end{proof}

\begin{theorem}\label{uo-P1}
	Let $ X$ be a Banach lattice. The following statements are equivalent:
	\begin{enumerate}[$(1)$]
		\item Every order closed convex set is $\sigma( X, X_{uo}^\sim)$-closed.
		\item $ X^*$ is order continuous and $ X$ has property $P1$.
		\item $ X^\sim_n$ is order continuous and $ X$ has property $P1$.
	\end{enumerate}
	If $X$ is monotonically complete with $OSSP$ and $X_n^\sim$ contains a strictly positive element, then they are also equivalent to the following:
	\begin{enumerate}[$(1)$]\addtocounter{enumi}{3}
		\item $\overline{C}^o=\overline{C}^{\sigma(X,X_{uo}^\sim)}$ for every norm bounded convex set in $X$.
		\item Every norm bounded order closed convex set is $\sigma( X, X_{uo}^\sim)$-closed.
		\item $X^*$ is order continuous. 
	\end{enumerate}
\end{theorem}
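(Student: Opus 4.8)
The plan is to first establish the equivalence of $(1)$, $(2)$ and $(3)$ for an arbitrary Banach lattice $X$, and then to bring in $(4)$, $(5)$ and $(6)$ under the additional hypotheses. The unifying observation throughout is that $X_{uo}^\sim=(X_n^\sim)_a$ (\cite[Theorem 2.3]{GLX1}), so that whenever $X_n^\sim$ is order continuous one has $X_{uo}^\sim=X_n^\sim$ and hence $\sigma(X,X_{uo}^\sim)=\sigma(X,X_n^\sim)$; in this situation the entire Section 2 machinery for $\sigma(X,X_n^\sim)$ applies verbatim to $\sigma(X,X_{uo}^\sim)$.

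For $(1)\implies(2)$ I would use that $X_{uo}^\sim\subseteq X_n^\sim$, so $\sigma(X,X_{uo}^\sim)$ is coarser than $\sigma(X,X_n^\sim)$ and every $\sigma(X,X_{uo}^\sim)$-closed convex set is $\sigma(X,X_n^\sim)$-closed; thus $(1)$ immediately yields property $P1$, while restricting to norm bounded sets and invoking Proposition \ref{uo-P4-orderctsdual} yields order continuity of $X^*$. The equivalence $(2)\Leftrightarrow(3)$ follows from Proposition \ref{dual-orderdual-B1} (order continuity of $X^*$ is equivalent to order continuity of $X_n^\sim$ together with $DOCP$) combined with Theorem \ref{necessaryP4} (property $P1$ forces $DOCP$). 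Finally $(3)\implies(1)$ is immediate from the unifying observation: if $X_n^\sim$ is order continuous then $\sigma(X,X_{uo}^\sim)=\sigma(X,X_n^\sim)$, so property $P1$ is precisely statement $(1)$.

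For the additional equivalences, assume $X$ is monotonically complete with $OSSP$ and $X_n^\sim$ has a strictly positive element. The trivial restrictions give $(1)\implies(5)$ and $(4)\implies(5)$, and Proposition \ref{uo-P4-orderctsdual} gives $(5)\implies(6)$. For $(6)\implies(2)$ I would note that order continuity of $X^*$ is a special case of condition $(4)$ of Theorem \ref{P1-characterization}, which under the present hypotheses is equivalent to property $P1$; hence $(6)$ yields property $P1$, and together with order continuity of $X^*$ this is exactly $(2)$. The remaining link $(3)\implies(4)$ needs a little more care: property $P1$ forces $DOCP$ by Theorem \ref{necessaryP4}, and since $X_n^\sim$ is order continuous with a strictly positive element and $X$ has $OSSP$, Corollary \ref{ocd-P2-P5-DOCP-equiv} upgrades $DOCP$ to property $P3$, that is, $\overline{C}^o=\overline{C}^{\sigma(X,X_n^\sim)}$ for every norm bounded convex set $C$; replacing $\sigma(X,X_n^\sim)$ by the identical topology $\sigma(X,X_{uo}^\sim)$ gives exactly $(4)$. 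Chaining these with the block $(1)\Leftrightarrow(2)\Leftrightarrow(3)$ closes the loop $(1)\implies(5)\implies(6)\implies(2)\Leftrightarrow(3)\implies(4)\implies(5)$, so all six statements are equivalent.

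The genuinely substantive step is $(3)\implies(4)$, or rather the recognition that statement $(4)$ is nothing but property $P3$ read through the identification $X_{uo}^\sim=X_n^\sim$. The main point is therefore not a new estimate but the bookkeeping needed to see that the Section 2 properties $P1$ through $P5$ and $DOCP$ all collapse into a single equivalence class once order continuity of $X^*$ is available, after which each of $(4)$, $(5)$, $(6)$ becomes a reformulation of a property already analyzed there.
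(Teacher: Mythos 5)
Your proposal is correct and follows essentially the same route as the paper: both rest on the observation that $X_{uo}^\sim=X_n^\sim$ when $X_n^\sim$ is order continuous, on Proposition \ref{uo-P4-orderctsdual} for extracting order continuity of $X^*$, and on Theorem \ref{necessaryP4}, Proposition \ref{dual-orderdual-B1}, Corollary \ref{ocd-P2-P5-DOCP-equiv} and Theorem \ref{P1-characterization} to close the implication cycle. The only differences are cosmetic reroutings (you prove $(2)\Leftrightarrow(3)$ directly and reach property $P3$ from $P1$ via $DOCP$ rather than via $P2$), which change nothing of substance.
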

\begin{proof}
Clearly, $(2)\implies  (3)$ and $(4)\implies  (5)$ always hold. $(5)\implies  (6)$ is Proposition \ref{uo-P4-orderctsdual}.\\
	$(1)\implies  (2)$. Since any $\sigma( X, X_{uo}^\sim)$-closed set is  $\sigma( X, X_{n}^\sim)$-closed, $ X$ has property $P1$. By Proposition \ref{uo-P4-orderctsdual}, we also obtain that $ X^*$ is order continuous. \\
	$(3)\implies  (1)$. When $ X_n^\sim$ is order continuous, $ X_{uo}^\sim=X_{n}^\sim$. It follows that $(1)$ and $P1$ are equivalent.\\
	Now, suppose that $X$ is monotonically complete with $OSSP$ and $X_n^\sim$ contains a strictly positive element.\\
	$(3)\implies  (4)$. By Proposition \ref{orderclosed-normorderclosed-orderctsdual} and Corollary \ref{ocd-P2-P5-DOCP-equiv}, $X$ has property $P3$. The conclusion follows from the fact that $X_{uo}^\sim=X_n^\sim$ when $X_n^\sim$ is order continuous.\\ 
	$(6)\implies  (3)$. If $ X^*$ is order continuous, Proposition \ref{dual-orderdual-B1} implies that $ X_n^\sim$ is order continuous and $X$ has $DOCP$. From Theorem \ref{P1-characterization}, we conclude that $X$ has property $P1$.
\end{proof}

From Theorem \ref{uo-P1}, we obtain that Problem \ref{sideproblem} has an affirmative answer if and only if $ X^\sim_n$ is order continuous and $ X$ has property $P1$. In particular, when $ X$ is a Musielak-Orlicz space, Problem \ref{sideproblem} has an affirmative answer if and only if $ X_n^\sim$ is order continuous.

\end{document}